\newtheorem{theorem}{Theorem}
\begin{document}

\title{The inverse Riemann zeta function}

\author{Artur Kawalec}

\date{}
\maketitle

\begin{abstract}
In this article, we develop a formula for an inverse Riemann zeta function such that for $w=\zeta(s)$ we have $s=\zeta^{-1}(w)$ for real and complex domains $s$ and $w$. The presented work is based on extending the analytical recurrence formulas for trivial and non-trivial zeros as to solve an equation $\zeta(s)-w=0$ for a given $w$-domain using logarithmic differentiation and zeta recursive root extraction methods. We further explore formulas for trivial and non-trivial zeros of the Riemann zeta function in greater detail, and next, we also explore an expansion of the inverse zeta function by an attractor of its branch singularities, and develop some identities that emerge from them.  In the last part, we extend the presented results as a general method for finding zeros and inverses of many other functions, such as the gamma function, the Bessel function of the first kind, or finite/infinite degree polynomials and rational functions, etc. We further compute all the presented formulas numerically to high precision and show that these formulas do indeed converge to the inverse of the Riemann zeta function and the related results. We also develop a fast algorithm to compute $\zeta^{-1}(w)$ for complex $w$.
\end{abstract}

\tableofcontents
\newpage
\section{Introduction}
The Riemann zeta function is classically defined by an infinite series
\begin{equation}\label{eq:20}
\zeta(s) = \sum_{n=1}^{\infty}\frac{1}{n^s},
\end{equation}
which is absolutely convergent $\Re(s)>1$, where $s=\sigma+it$ is a complex variable. The values for the first few special cases are:

\begin{equation}\label{eq:9}
\begin{aligned}
\zeta(1) &\sim\sum_{n=1}^{k}\frac{1}{n}\sim\gamma+\log(k) \quad \text{as}\quad k\to \infty,\\
\zeta(2) &=\frac{\pi^2}{6}, \\
\zeta(3) &=1.20205690315959\dots, \\
\zeta(4) &=\frac{\pi^4}{90}, \\
\zeta(5) &=1.03692775514337\dots,
\end{aligned}
\end{equation}
and so on. For $s=1$, the series diverges asymptotically as $\gamma+\log(k)$, where $\gamma=0.5772156649\dots$ is the Euler-Mascheroni constant. The special values for even positive integer argument are generated by the Euler's formula
\begin{equation}\label{eq:9}
\zeta(2k) = \frac{\mid B_{2k}\mid}{2(2k)!}(2\pi)^{2k},
\end{equation}
for which the value is expressed as a rational multiple of $\pi^{2k}$ where the constants $B_{2k}$ are Bernoulli numbers defined such that $B_0=1$, $B_1=-\frac{1}{2}$, $B_2=\frac{1}{6}$, and so on. For odd positive integer argument, the values of $\zeta(s)$ converge to unique constants, which are not known to be expressed as a rational multiple of $\pi^{2k+1}$ as occurs in the even positive integer case. For $s=3$, the value is commonly known as  Ap\'ery's constant, who proved its irrationality [2]. The key connection to prime numbers is by means of Euler's infinite product formula

\begin{equation}\label{eq:20}
\zeta(s) = \prod_{n=1}^{\infty}\left(1-\frac{1}{p_n^s}\right)^{-1}
\end{equation}
where $p_1=2$, $p_2=3$, $p_3=5$ and so on, denote the prime number sequence. These prime numbers can be recursively extracted from the Euler product by the Golomb's formula [9]. Hence, if we define a partial Euler product up to the $n$th order as
\begin{equation}\label{eq:20}
Q_{n}(s)=\prod_{k=1}^{n}\left(1-\frac{1}{p_k^s}\right)^{-1}
\end{equation}
for $n>1$ and $Q_0(s)=1$, then we obtain a recurrence relation for the $p_{n+1}$ prime
\begin{equation}\label{eq:20}
p_{n+1}=\lim_{s\to \infty}\left(1-\frac{Q_n(s)}{\zeta(s)}\right)^{-1/s}.
\end{equation}
This leads to representation of primes by the following limit identities

\begin{equation}\label{eq:20}
\begin{aligned}
p_{1} &=\lim_{s\to \infty}\left[1-\frac{1}{\zeta(s)}\right]^{-1/s}, \\
\\
p_{2} &=\lim_{s\to \infty}\left[1-\frac{(1-\frac{1}{2^s})^{-1}}{\zeta(s)}\right]^{-1/s}, \\
\\
p_{3} &=\lim_{s\to \infty}\left[1-\frac{(1-\frac{1}{2^s})^{-1}(1-\frac{1}{3^s})^{-1}}{\zeta(s)}\right]^{-1/s}, \\
\end{aligned}
\end{equation}
and so on, whereby all the previous primes are used to excite the Riemann zeta function in a such as way as to extract the next prime. A detailed proof and numerical computation is shown in [11][12]. We will find that this recursive structure (when taken in the limit as $s\to \infty$) is a basis for the rest of this article, and will lead to formulas for trivial and non-trivial zeros, and the inverse Riemann zeta function.

Furthermore, the Riemann zeta series (1) induces a general Weierstrass factorization of the form
\begin{equation}\label{eq:20}
\zeta(s)=\frac{e^{(\log(2\pi)-1)s}}{2(s-1)}\prod_{\rho_{t,n}}^{}\left(1-\frac{s}{\rho_{t,n}}\right)e^{\frac{s}{\rho_{t,n}}}\prod_{\rho_{nt}}^{}\left(1-\frac{s}{\rho_{nt}}\right)e^{\frac{s}{\rho_{nt}}}
\end{equation}
where it analytically extends the zeta function to the whole complex plane and reveals its full structure of the poles and zeros [1, p.807]. Only a simple pole exists at $s=1$, hence $\zeta(s)$ is convergent everywhere else in the complex plane, i.e., the set $\mathbb{C}\backslash 1$.  Moreover, there are two kinds of zeros classified as the trivial zeros $\rho_t$ and non-trivial zeros $\rho_{nt}$.  The first infinite product term of (8) encodes the factorization due to trivial zeros

\begin{equation}\label{eq:20}
\rho_{t,n}=-2n \quad
\end{equation}
for $n\geq 1$ which occur at negative even integers $-2,-4,-6 \ldots$, where $n$ is index variable for the $n$th zero. The second infinite product term of (8) encodes the factorization due to non-trivial zeros, which are complex numbers of the form

\begin{equation}\label{eq:20}
\rho_{nt,n}=\sigma_n+it_n
\end{equation}
and, as before, $n$ is the index variable for the $n$th zero (this convention is straightforward if the zeros are on the critical line). But in general, the real components of non-trivial zeros are known to be constrained to lie in a critical strip in a region between $0<\sigma_n<1$. It is also known that there is an infinity of zeros located on the critical line at $\sigma=\frac{1}{2}$, but it is not known whether there are any zeros off of the critical line, a problem of the Riemann hypothesis (RH), which proposes that all zeros should lie on the critical line. The first few zeros on the critical line at $\sigma_n=\frac{1}{2}$ have imaginary components $t_1 = 14.13472514...$, $t_2 = 21.02203964...$, $t_3 = 25.01085758...$, and so on,  which were computed by an analytical recurrence formula as

\begin{equation}\label{eq:20}
t_{n+1} = \lim_{m\to\infty}\left[\frac{(-1)^{m}}{2}\left(2^{2m}-\frac{1}{(2m-1)!}\log (|\zeta|)^{(2m)}\big(\frac{1}{2}\big)-\frac{1}{2^{2m}}\zeta(2m,\frac{5}{4})\right)-\sum_{k=1}^{n}\frac{1}{t_{k}^{2m}}\right]^{-\frac{1}{2m}}
\end{equation}
as we have shown in [12][13] assuming (RH). Also, $\zeta(s,a)$ is the Hurwitz zeta function
\begin{equation}\label{eq:20}
\zeta(s,a) = \sum_{n=0}^{\infty} \frac{1}{(n+a)^s},
\end{equation}
which is a shifted version of (1) by an arbitrary parameter $a\neq -1,-2,\ldots$. Now, substituting the Weierstrass infinite product (8) into the Golomb's formula (6) can be used to generate primes directly from non-trivial zeros. In later sections, we will show that non-trivial zeros can also be generated directly from primes.

Furthermore, the Riemann zeta function can have many points $s_n$ such that $w=\zeta(s_n)$ can map to the same $w$ value. In Figure 1 we plot $\zeta(s)$ for real $s$ and $w$, and note that for $s>1$ the function is monotonically decreasing from $+\infty$ and tends $O(1)$ as $s\to \infty$, and for the domain $-2.7172628292\ldots<s<1$ it's monotonically decreasing from $0.0091598901\ldots$ to $-\infty$, as also shown in Figure 2. And for $s<-2.7172628292\ldots$, it becomes oscillatory where there are many $s_n$ solutions. For example, the first two $s$ values
\begin{equation}\label{eq:20}
\begin{aligned}
\zeta(-2.47270347\ldots) &= \zeta(-3)=\frac{1}{120}
\end{aligned}
\end{equation}
map to the same $w$ value as shown in Figure 2. It is usually customary to report that $\zeta(-3)=\frac{1}{120}$, but in fact is the second solution $s_2$, the first solution, or the principal solution $s_1$, is the value $-2.47270347\ldots$.

\begin{figure}[h]
  \centering
  % Requires \usepackage{graphicx}
  \includegraphics[width=170mm]{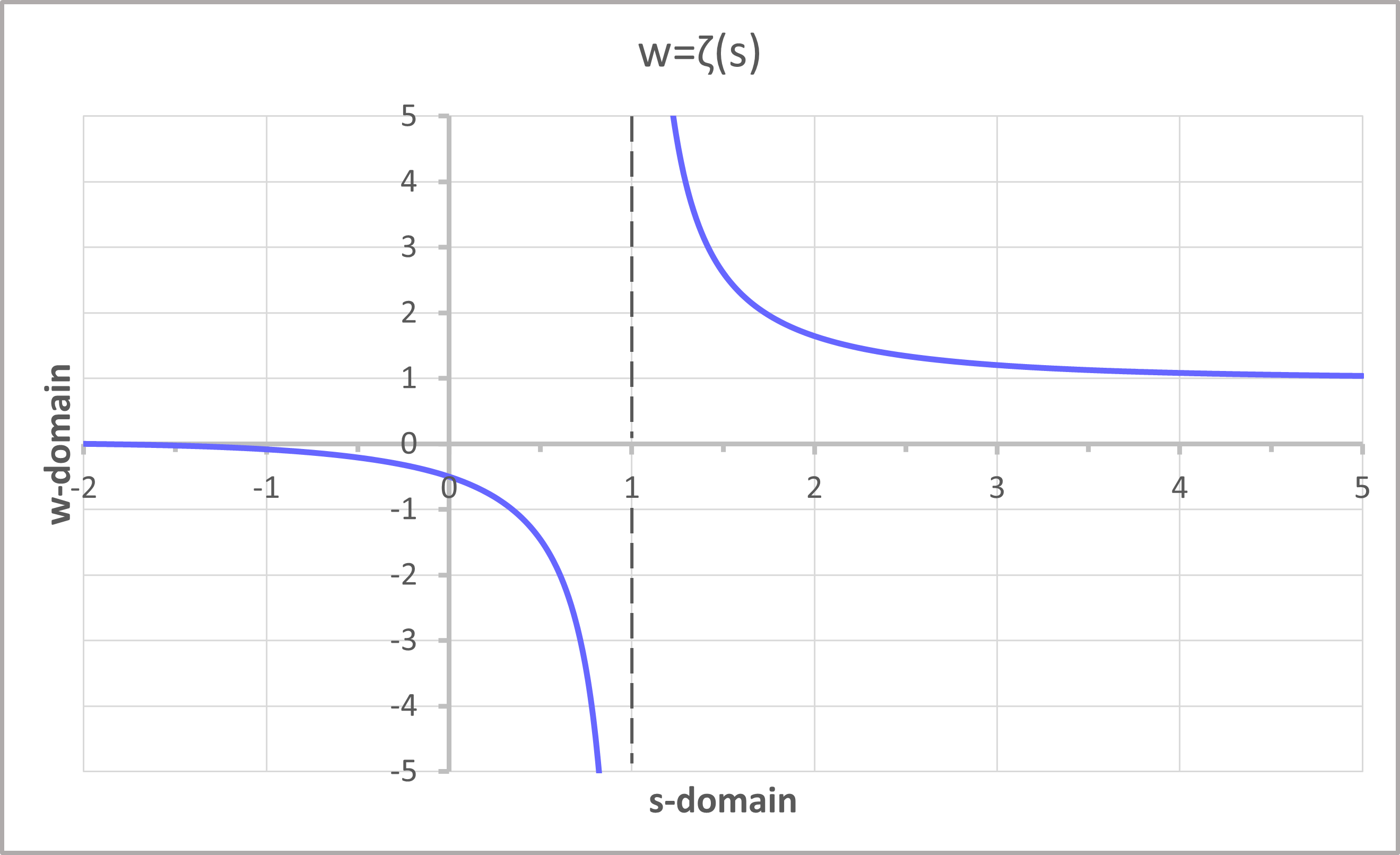}\\
  \caption{A plot of $w=\zeta(s)$ for $s\in (-2,5)$. }\label{1}
\end{figure}

\begin{figure}[h]
  \centering
  % Requires \usepackage{graphicx}
  \includegraphics[width=170mm]{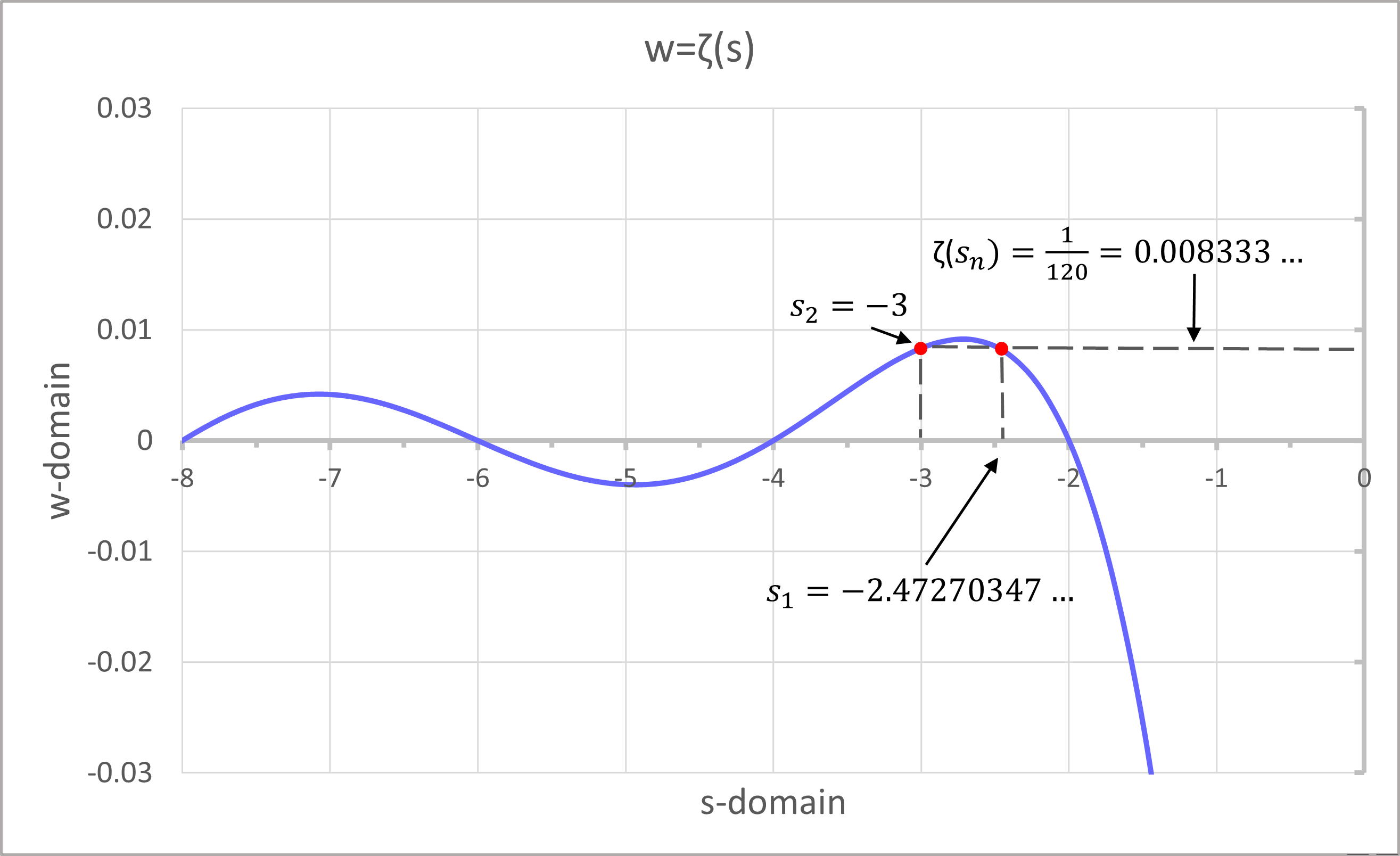}\\
  \caption{A zoomed-in plot of $w=\zeta(s)$ for $s\in (-8,0)$ showing oscillatory behavior. }\label{1}
\end{figure}

In this article, we seek to develop a formula for an inverse Riemann zeta function $s=\zeta^{-1}(w)$. In general, the existence of an inverse is established by an inverse function theorem as shown in [14, p.135], where for any holomorphic function $f(z)$, an inverse exists when $f'(z)\neq 0$ in the $z$-domain. Hence, for

\begin{equation}\label{eq:20}
w=\zeta(s)
\end{equation}
there is an inverse function
\begin{equation}\label{eq:20}
s=\zeta^{-1}(w),
\end{equation}
which implies that

\begin{equation}\label{eq:20}
\zeta^{-1}(\zeta(s)) = s
\end{equation}
and
\begin{equation}\label{eq:20}
\zeta(\zeta^{-1}(w))=w
\end{equation}
for some real and complex domains $w$ and $s$, except at those points where $\zeta'(s)=0$ in the $s$-domain. The inverse zeta is a multi-valued function with infinite number of branches, and $s_n$ are the multi-valued solutions for which $w=\zeta(s_n)$.  The presented method can also recursively access these multi-valued solutions of $s_n=\zeta^{-1}(w)$, but as we will find, the computational precision becomes very high, and so we will primarily focus on the principal solution $s_1$, which as we will find is very easy to compute and will cover almost the entire complex plane. Additionally, in [8] discusses additional theoretical basis behind solutions to (14), also known as $a$-points, which we refer to as $s_n$.

We develop a recursive formula for an inverse Riemann zeta function as:

\begin{equation}\label{eq:20}
s_{n+1}=\zeta^{-1}(w)= \lim_{m\to\infty}\pm \Bigg[-\frac{1}{(2m-1)!}\frac{d^{(2m)}}{ds^{(2m)}}\log\Big[(\zeta(s)-w)(s-1)\Big]\Bigr\rvert_{s\to 0}-\sum_{k=1}^{n}\frac{1}{s_k^{2m}}\Bigg]^{-\frac{1}{2m}}
\end{equation}
where $s_n$ is the value for which $w=\zeta(s_n)$ for some branch $\lambda$ of the $m$th root; it will recursively generate solutions $s_n$ for a given $w$-domain. The formula for the principal branch is

\begin{equation}\label{eq:20}
s_{1}=\zeta^{-1}(w)=\lim_{m\to\infty}\pm \left[-\frac{1}{(m-1)!}\frac{d^{m}}{ds^{m}}\log\left[(\zeta(s)-w)(s-1)\right]\Bigr\rvert_{s\to 0}\right]^{-\frac{1}{m}},
\end{equation}
and is valid for all complex $w$-domain except in a small strip region $\Re(w)\in (j_1,1)\cup \{\Im(w)=0\}$ that we determined experimentally, and $j_1=0.00915989\ldots$ is a constant. As we will show in more details in later sections, this formula can easily invert the Basel problem

\begin{equation}\label{eq:20}
\zeta^{-1}\left(\frac{\pi^2}{6}\right)=2
\end{equation}
or the Ap\'ery's  constant

\begin{equation}\label{eq:20}
\zeta^{-1}(1.20205690315959428\ldots)=3,
\end{equation}
and essentially the entire complex domain $w\in \mathbb{C}$ with an exception of a strip region $\Re(w)\in (j_1,1)\cup \{\Im(w)=0\}$ where there lie (possibly) an infinite number of branch non-isolated singularities.

The way in which we will arrive at the presented formula is by connecting two simple Theorems. The first Theorem 1, as presented in Section 2, is the $m$th log-derivative formula for obtaining a generalized zeta series over the zeros of a function. Such a method appears in the literature from time to time and can be traced back to Euler, who, according to a reference in [21, p.500], used it to devise a means of computing several zeros of the Bessel function of the first kind by solving a system of equations generated by the log-derivative formula. In the works of Voros, Lehmer, and Matsuoka, it has been used to find a closed-form formula for the secondary zeta functions [15][16][19][20]. Then, the second Theorem 2, as given in Section 3, develops a method for finding a zeta recurrence formula for the $n$th+1 term of a generalized zeta series, i.e., all terms of a generalized zeta series must be known in order to generate the $n$th+1 term, as we have shown in our previous work [12][13]. In Section 4, we connect these two theorems and find formulas for trivial and non-trivial zeros of the Riemann zeta function and explore their properties in greater detail. We then develop a formula for an inverse zeta and also explore some its identities. We empirically observe that there are infinitely many branch singularities of the inverse zeta that are spread out along a narrow strip $(j_1,1)$ forming branch singularity attractor. And in the last part, we briefly extend the presented results to find zeros (and inverses) of many common functions, such as the gamma function, the Bessel function of the first kind, the trigonometric functions, Lambert-W function, and any entire function or finite/infinite degree polynomial or rational function (provided that the function fits the constraints of this method).

Throughout this article, we numerically compute the presented formulas to high precision in PARI/GP software package [17], as it is an excellent platform for performing arbitrary precision computations, and show that these formulas do indeed converge to the inverse of the Riemann zeta function to high precision. We note that when running the script in PARI, the precision has to be set very high (we generally set precision to 1000 decimal places). Also, the Wolfram Mathematica software package [22] was instrumental in developing this article.

\section{Logarithmic differentiation}
In this section, we outline the zeta $m$th log-derivative method. We recall the argument principle, that for an analytic function $f(z)$ we have

\begin{equation}\label{eq:20}
\frac{1}{2\pi i}\oint_{\Omega} \frac{f^{\prime}(z)}{f(z)} dz = N_z-N_p
\end{equation}
which equals the number of zeros $N_z$ minus the number of poles $N_p$ (counting multiplicity) which are enclosed in a simple contour $\Omega$. But now, instead working with the number of zeros or poles, the aim of the $m$th log-derivative method is to find a generalized zeta series over the zeros and poles of the function $f(z)$ in question. Hence if $Z=\{z_1,z_2,z_3,\ldots,z_{N_z}\}$ is a set of all zeros of $f(z)$ and $P=\{p_1,p_2,p_3,\ldots, p_{N_p} \}$ is a set of all poles of $f(z)$ in the whole complex plane, then the generalized zeta series are

\begin{equation}\label{eq:20}
Z(s) = \sum_{n=1}^{N_z}\frac{1}{z_n^s}
\end{equation}
and
\begin{equation}\label{eq:20}
P(s) = \sum_{n=1}^{N_p}\frac{1}{p_n^s}.
\end{equation}
The number of zeros or poles may be finite or infinite, but in the latter case, the convergence of a generalized zeta series to the right-half side of the line $\Re{(s)}>\mu$ may depend on the distribution of its terms. If the number of zeros and poles is finite, then one could count them by evaluating $Z(0)-P(0)=N_z-N_p$, which reduces to the argument principle (22). The argument principle may be modified further by introducing another function $h(z)$ as such

\begin{equation}\label{eq:20}
\frac{1}{2\pi i}\oint_{\Omega}  \frac{f^{\prime}(z)}{f(z)}h(z) dz = \sum_{n=1}^{N_z}h(z_i)-\sum_{n=1}^{N_p}h(p_n).
\end{equation}
The proof is a slight modification of a standard proof of (22) using the Residue theorem, and if we let $h(z)=z^{-s}$, then we have
\begin{equation}\label{eq:20}
\frac{1}{2\pi i}\oint_{\Omega}  \frac{f^{\prime}(z)}{f(z)}\frac{1}{z^s} dz = Z(s)-P(s)
\end{equation}
but the contour has to be deformed as to not encircle the origin. However, in the following sections, we will not pursue (26) as the integral makes the study and computation more difficult, as well as being dependent on the contour. There is a simpler variation of (26) which is better suited for our study. The main formula of the zeta $m$th log-derivative method is:

\begin{theorem}
If $f(z)$ is meromorphic function with a set of zeros $z_i$ and poles $p_i$ in $\mathbb{C}\backslash 0$, then we have
\begin{equation}\label{eq:20}
-\frac{1}{(m-1)!}\frac{d^{m}}{dz^{m}}\log\left[\frac{f(z)}{e^{g(z)}}\right]\Bigr\rvert_{z\to 0}=Z(m)-P(m),
\end{equation}
which is valid for a positive integer variable $m>a$, where $a$ is a constant and $e^{g(z)}$ is the associated zero-free scaling function of the Weierstrass factorization products.
\end{theorem}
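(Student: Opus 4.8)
The plan is to reduce the statement to a term-by-term power-series computation built on the Weierstrass/Hadamard factorization of $f$. First I would fix the standing hypotheses that make the right-hand side meaningful: that $f$ admits a factorization of finite genus $p$, i.e. $f(z)=e^{g(z)}\,\big(\prod_i E_p(z/z_i)\big)\big/\big(\prod_i E_p(z/p_i)\big)$, where $E_p(w)=(1-w)\exp\!\big(w+\tfrac{w^2}{2}+\cdots+\tfrac{w^p}{p}\big)$ is the genus-$p$ elementary factor, and $e^{g(z)}$ is the zero-free exponential-polynomial prefactor referred to in the statement (for $\zeta$ this is exactly equation~(8), with $g(z)=(\log(2\pi)-1)z-\log 2-\log(z-1)$ handled by first clearing the pole, and similarly for $\Gamma$, $J_\nu$, polynomials, etc.). Since $0\notin Z\cup P$, the quotient $f(z)/e^{g(z)}$ is holomorphic and nonvanishing in a disc about the origin, so $\log[f(z)/e^{g(z)}]$ is holomorphic there and its Taylor coefficients at $0$ are exactly what the left-hand side extracts.

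Next I would carry out the expansion. Using the identity, valid for $|w|<1$,
\[
\log E_p(w)=\log(1-w)+\sum_{k=1}^{p}\frac{w^k}{k}=-\sum_{k=p+1}^{\infty}\frac{w^k}{k},
\]
one gets, up to an additive constant absorbed into $g$,
\[
\log\frac{f(z)}{e^{g(z)}}=-\sum_{i}\sum_{k=p+1}^{\infty}\frac{1}{k}\Big(\frac{z}{z_i}\Big)^{k}+\sum_{i}\sum_{k=p+1}^{\infty}\frac{1}{k}\Big(\frac{z}{p_i}\Big)^{k}.
\]
Applying $\frac{d^{m}}{dz^{m}}$ and evaluating at $z=0$, for $m\ge p+1$ only the $k=m$ term of each inner sum survives, contributing $\frac{d^m}{dz^m}\big[-\tfrac1m(z/z_i)^m\big]\big|_{z=0}=-(m-1)!/z_i^{m}$ from a zero and $+(m-1)!/p_i^{m}$ from a pole. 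Dividing by $-(m-1)!$ and summing over $i$ then yields $Z(m)-P(m)$, which is the claim. Equivalently, and perhaps cleaner to write, I would differentiate once to pass to $f'/f$: from $\big(f'/f\big)(z)-g'(z)=\sum_i\big[\tfrac{1}{z-z_i}+\sum_{k=1}^{p}\tfrac{z^{k-1}}{z_i^{k}}\big]-\sum_i\big[\cdots\big]$, expand $\tfrac{1}{z-z_i}=-\sum_{j\ge0}z^{j}/z_i^{j+1}$, and read off the coefficient of $z^{m-1}$, which equals $-Z(m)+P(m)$ once $m-1\ge p$; this is the $(m-1)$-th Taylor coefficient of $\frac{d}{dz}\log[f/e^{g}]$, giving the same conclusion after multiplying by $-1$.

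I would then pin down the constant $a$: it must be at least the genus $p$ (so that the Weierstrass correction monomials and the polynomial $g$ are annihilated by $d^{m}/dz^{m}$ and so that the reindexing above is valid), at least $\deg g$, and — when $f$ has infinitely many zeros or poles — at least the abscissa of convergence $\mu$ of the Dirichlet-type series $Z(s)-P(s)$, so that the right-hand side converges. Thus $a=\max\{p,\deg g,\mu\}$ works (for $\zeta$ this gives $a=1$). The step I expect to be the main obstacle is the rigorous justification of interchanging the $m$-fold differentiation with the (doubly) infinite summation — i.e.\ that the logarithm of the Weierstrass product may be differentiated termwise at $0$. For finitely many zeros/poles this is immediate inside the disc $|z|<\min_i\{|z_i|,|p_i|\}$; in general it follows from the local uniform convergence of the Hadamard product on compact subsets of $\mathbb{C}\setminus(Z\cup P)$, together with the standard genus estimates $|\log E_p(w)|\le C|w|^{p+1}$ for $|w|\le\tfrac12$, which give absolute and locally uniform convergence of the double series near $0$ and hence license both the rearrangement and the termwise differentiation.
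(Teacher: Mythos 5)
Your proposal follows essentially the same route as the paper: write $f/e^{g}$ as a Weierstrass/Hadamard product over zeros and poles, expand the logarithm of each canonical factor into the power series $-\sum_{k>p} w^k/k$, interchange the order of summation to recognize $Z(k)$ and $P(k)$ as coefficients, and then apply $\frac{d^m}{dz^m}\big|_{z=0}$ to extract $Z(m)-P(m)$. Your version is somewhat tighter — you fold the polynomial correction into $\log E_p(w)=-\sum_{k>p}w^k/k$ rather than carrying a separate $\phi_a$ term as the paper does, you identify $a=\max\{p,\deg g,\mu\}$ explicitly, and you actually address the justification for termwise differentiation via locally uniform convergence and the genus estimate — a point the paper's proof silently assumes.
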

\noindent This formula generates $Z(m)-P(m)$ over all zeros and poles in the whole complex plane, as opposed to being enclosed in some contour as in (26). We now outline a proof of Theorem 1.

\begin{proof}
We express a meromorphic function $f(z)$ as having simple zeros and poles by the ratio of two entire functions that are admitting a Weierstrass factorization of the form

\begin{equation}\label{eq:20}
f(z)=e^{g(z)}\prod_{n=1}^{\infty}\left(1-\frac{z}{z_n}\right)\prod_{n=1}^{\infty}\left(1-\frac{z}{p_n}\right)^{-1}\phi_a(z; z_n,p_n),
\end{equation}
with the zero or pole at origin removed, and $e^{g(z)}$ is a scaling component not having any zeros or poles, and $\exp[\phi_a(z;z_n,p_n)]=\frac{z}{z_n}+\frac{1}{2}\frac{z^2}{z^2_n}+\ldots+\frac{1}{u}\frac{z^{u}}{z^u_{n}}-\frac{z}{p_n}-\frac{1}{2}\frac{z^2}{p^2_n}-\ldots-\frac{1}{v}\frac{z^{v}}{p^v_{n}}$ is the exponential part of the canonical factors up to maximum $a=\max(u,v)$ order, then one has

\begin{equation}\label{eq:20}
\log\left(\frac{f(z)}{e^{g(z)}}\right)=\phi_a(z;z_n,p_n)+\sum_{n=1}^{\infty}\log\left(1-\frac{z}{z_n}\right)-\sum_{n=1}^{\infty}\log\left(1-\frac{z}{p_n}\right).
\end{equation}
Now, using the Taylor series expansion for the logarithm as
\begin{equation}\label{eq:20}
\log(1-z)=-\sum_{k=1}^{\infty}\frac{z^k}{k}=-z-\frac{z^2}{2}-\frac{z^3}{3}-\ldots
\end{equation}
for $|z|<1$, we obtain

\begin{equation}\label{eq:20}
\log\left(\frac{f(z)}{ e^{g(z)}}\right)=\phi_a(z;z_n,p_n)-\sum_{n=1}^{\infty}\sum_{k=1}^{\infty}\frac{1}{k}\left(\frac{z}{z_n}\right)^k+\sum_{n=1}^{\infty}\sum_{k=1}^{\infty}\frac{1}{k}\left(\frac{z}{p_n}\right)^k.
\end{equation}
Now interchanging the order of summation yields

\begin{equation}\label{eq:20}
\log\left(\frac{f(z)}{ e^{g(z)}}\right)=\phi_a(z;z_n,p_n)-\sum_{k=1}^{\infty}\frac{z^k}{k}\left[\sum_{n=1}^{\infty}\frac{1}{z^k_n}\right]+\sum_{k=1}^{\infty}\frac{z^k}{k}\left[\sum_{n=1}^{\infty}\frac{1}{p^k_n}\right],
\end{equation}
and hence, by recognizing the inner sum as a generalized zeta series yields

\begin{equation}\label{eq:20}
\log\left(\frac{f(z)}{e^{g(z)}}\right)=\phi_a(z;z_n,p_n)-\sum_{k=1}^{\infty}Z(k)\frac{z^k}{k}+\sum_{k=1}^{\infty}P(k)\frac{z^k}{k}.
\end{equation}
From this form, we can now extract $Z(m)-P(m)$ by the $m^{th}$ order differentiation as

\begin{equation}\label{eq:20}
-\frac{1}{(m-1)!}\frac{d^{m}}{dz^{m}}\log\left[\frac{f(z)}{e^{g(z)}}\right]\Bigr\rvert_{z\to 0}=Z(m)-P(m)
\end{equation}
evaluated as $z\to 0$ in the limit, as the derivatives of $(\phi_a(z;z_n,p_n))^{(m)}=0$ vanish for $m>a$.

\end{proof}

We also remark if the number of zeros or poles is infinite, then the Weierstrass factorization product naturally imposes that $Z(m)$ and $P(m)$ are absolutely convergent, which is very critical for next section.

Additionally, we can obtain an integral representation using the Cauchy integral formula applied to coefficients of Taylor expansion (34) as
\begin{equation}\label{eq:20}
\lim_{z_0\to 0}\Bigg\{-\frac{m}{2\pi i}\oint_{\Omega_0} \frac{1}{(z-z_0)^{m+1}}\log\left(\frac{f(z)}{e^{g(z)}}\right)dz\Bigg\}=Z(m)-P(m),
\end{equation}
where $\Omega_0$ is a simple contour encircling the origin, but unlike (26), is not enclosing zeros or poles. In order to apply (34) and (35) successfully, one has to judiciously choose $g(z)$ as to remove it from $f(z)$ so that the $m$th log-differentiation will not produce unwanted artifacts due to $g(z)$, as well as the zero or pole at origin.

\section{The zeta recurrence formula}
We now outline a method to extract the terms of a generalized zeta series by means of a recurrence formula satisfied by the terms of such series. Hence, if the terms of a generalized zeta series are to be zeros of a function, then such a method effectively gives a recurrence formula satisfied by the zeros, which in turn can be used to compute the zeros. And similarly, the same holds if the terms of a generalized zeta series are poles of a function. In fact, any quantities represented by the terms of a generalized zeta series can be recursively found. In [12], we developed a formula for the $n$th+1 prime based on the prime zeta function. But for the purpose of this paper, let us consider the generalized zeta series over zeros $z_n$ of a function

\begin{equation}\label{eq:20}
Z(s) = \sum_{n=1}^{N_z}\frac{1}{z_n^s} = \frac{1}{z_1^s}+\frac{1}{z_2^s}+\frac{1}{z_3^s}+\ldots
\end{equation}
and let us also assume that the zeros are positive, real, and ordered from smallest to largest such that $0<z_1<z_2<z_3<\ldots <z_n$, then the asymptotic relationship holds
\begin{equation}\label{eq:20}
\frac{1}{z_n^s} \gg \frac{1}{z_{n+1}^s}
\end{equation}
as $s\to \infty$. To illustrate how fast the terms decay, let us take $z_1=2$ and $z_2$=3. Then for $s=10$ we compute

\begin{equation}\label{eq:20}
\begin{aligned}
\frac{1}{2^s} &= 9.7656\ldots\times 10^{-4},\\
\frac{1}{3^s} &= 1.6935\ldots\times 10^{-5},
\end{aligned}
\end{equation}
where we roughly see an order of magnitude difference. But for $s=100$ we compute
\begin{equation}\label{eq:20}
\begin{aligned}
\frac{1}{2^s} &= 7.8886\ldots\times 10^{-31},\\
\frac{1}{3^s} &= 1.9403\ldots\times 10^{-48},
\end{aligned}
\end{equation}
where we see a difference by $17$ orders of magnitude. Hence, as $s\to \infty$, then

\begin{equation}\label{eq:20}
O\left(z_{n}^{-s}\right) \gg O\left(z_{n+1}^{-s}\right)
\end{equation}
as the $z^{-s}_{n+1}$ term completely vanishes in relation to $z^{-s}_{n}$, and so the $z^{-s}_{n}$ term dominates the limit. As a result, we write

\begin{equation}\label{eq:20}
z_{1}^{-s} \gg z_{2}^{-s} \gg z_{3}^{-s} \gg \ldots \gg z_{n}^{-s}.
\end{equation}
From this we have

\begin{equation}\label{eq:20}
Z(s) \sim O\left(z_{1}^{-s}\right)
\end{equation}
as $s\to \infty$ where the lowest order term dominates, and we refer to it as the principal term, or in the case where the zeta series are considered to be zeros of a function, the principal zero. This rapid decay of higher order zeta terms (41) opens a possibility for a recursive root extraction as shown by Theorem 2 next.

\begin{theorem}
If $\{z_n\}$ is a set of positive real numbers ordered such that $0<z_1<z_2<z_3<\ldots <z_n$, and so on, then the recurrence relation for the $n$th+1 term is
\begin{equation}\label{eq:20}
z_{n+1}=\lim_{s\to\infty}\left(Z(s)-\sum_{k=1}^{n}\frac{1}{z_k^{s}}\right)^{-1/s}.
\end{equation}
\end{theorem}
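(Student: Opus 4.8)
The claim is that for an increasing sequence $0<z_1<z_2<\cdots$, the $(n+1)$-th term is recovered as
\[
z_{n+1}=\lim_{s\to\infty}\left(Z(s)-\sum_{k=1}^{n}\frac{1}{z_k^{s}}\right)^{-1/s},
\]
where $Z(s)=\sum_{k\ge 1} z_k^{-s}$. The natural approach is to isolate the tail of the series after subtracting off the first $n$ terms, factor out the dominant term $z_{n+1}^{-s}$, and show the remaining factor tends to $1$ after taking the $(-1/s)$-th power. First I would set $R_n(s):=Z(s)-\sum_{k=1}^n z_k^{-s}=\sum_{k=n+1}^{\infty} z_k^{-s}$, which is exactly the tail, and note that convergence of $Z(s)$ for large $s$ (guaranteed either because the sum is finite, or because, in the intended application, the Weierstrass product forces absolute convergence of $Z(m)$ as remarked after Theorem 1) makes $R_n(s)$ well-defined for $s$ beyond some abscissa.

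Next I would factor: $R_n(s)=z_{n+1}^{-s}\bigl(1+\sum_{k=n+2}^{\infty}(z_{n+1}/z_k)^{s}\bigr)$. Since $z_{n+1}<z_k$ for every $k\ge n+2$, each ratio $z_{n+1}/z_k$ lies in $(0,1)$, so each term $(z_{n+1}/z_k)^s\to 0$ as $s\to\infty$. The key quantitative step is to control the infinite sum $\varepsilon_n(s):=\sum_{k\ge n+2}(z_{n+1}/z_k)^s$ uniformly: bounding it by $(z_{n+1}/z_{n+2})^{s-s_0}\sum_{k\ge n+2}(z_{n+1}/z_k)^{s_0}$ for a fixed reference exponent $s_0$ in the region of convergence shows $\varepsilon_n(s)\to 0$ geometrically. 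Then $R_n(s)=z_{n+1}^{-s}(1+\varepsilon_n(s))$, so
\[
\bigl(R_n(s)\bigr)^{-1/s}=z_{n+1}\,(1+\varepsilon_n(s))^{-1/s},
\]
and since $(1+\varepsilon_n(s))^{-1/s}=\exp\!\bigl(-\tfrac1s\log(1+\varepsilon_n(s))\bigr)\to e^{0}=1$ (the logarithm being bounded near $0$ while $1/s\to 0$), the limit equals $z_{n+1}$, as desired. The base case $n=0$, with the empty sum, is the special instance $z_1=\lim_{s\to\infty}Z(s)^{-1/s}$, handled identically by factoring out $z_1^{-s}$.

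**Main obstacle.** The genuinely delicate point is not the algebra but the interchange and uniform control in the case $N_z=\infty$: I must ensure the tail $R_n(s)$ converges and that $\varepsilon_n(s)\to 0$ even though it is an infinite sum whose every term vanishes — pointwise decay of each term does not by itself give decay of the sum. The remark after Theorem 1, that the Weierstrass factorization forces $Z(m)$ to be absolutely convergent, is exactly what I would invoke to pin down an abscissa $\mu$ with $Z(s)$ finite for $s>\mu$; then the factor-out-and-dominate estimate above converts that single convergence fact into the needed uniform geometric bound. A secondary subtlety worth a sentence is that the zeros must be \emph{strictly} ordered ($z_n<z_{n+1}$, no repetitions) for the dominant term to be unambiguous; if $z_{n+1}$ had multiplicity $r$, the same argument would instead yield $z_{n+1}\cdot r^{-1/s}\to z_{n+1}$, so the formula is in fact robust to multiplicity, but the clean statement assumes simple, strictly increasing terms.
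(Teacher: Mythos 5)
Your proof takes essentially the same approach as the paper's: isolate the tail $R_n(s)=\sum_{k>n}z_k^{-s}$, observe that $z_{n+1}^{-s}$ dominates it as $s\to\infty$, and extract $z_{n+1}$ via the $(-1/s)$-th power. Where the paper argues this informally with $\gg$ notation and the assertion that ``$Z(s)$ dominates the limit,'' you make the decisive step rigorous by factoring $R_n(s)=z_{n+1}^{-s}\bigl(1+\varepsilon_n(s)\bigr)$ and controlling the infinite correction $\varepsilon_n(s)$ with a uniform geometric bound anchored at a fixed abscissa of convergence $s_0$ — precisely the point (termwise decay of an infinite sum does not imply decay of the sum) that the paper's sketch leaves implicit.
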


\begin{proof}
First we begin solving for $z_{1}$ in (36) to obtain
\begin{equation}\label{eq:20}
\frac{1}{z_1^s}=Z(s)-\frac{1}{z_2^s}-\frac{1}{z_3^s}-\ldots
\end{equation}
and then we get
\begin{equation}\label{eq:20}
z_1=\left(Z(s) -\frac{1}{z_2^s}-\frac{1}{z_3^s}-\ldots\right)^{-1/s}.
\end{equation}
If we then consider the limit
\begin{equation}\label{eq:20}
z_1=\lim_{s\to\infty}\left(Z(s)-\frac{1}{z_2^s}-\frac{1}{z_3^s}-\ldots\right)^{-1/s}
\end{equation}
then because the series is convergent and since $z_1^{-s}\gg z_2^{-s}$, then $O[Z(s)]\sim O(z_1^{-s})$ as $s\to \infty$, and so the higher order zeros decay as $O(z_2^{-s})$ faster than $Z(s)$, and so $Z(s)$ dominates the limit, hence the formula for the principal zero is:
\begin{equation}\label{eq:20}
z_1=\lim_{s\to\infty}\left[Z(s)\right]^{-1/s}.
\end{equation}

\noindent The next zero is found the same way, by solving for $z_2$ in (36) we get

\begin{equation}\label{eq:20}
z_2=\lim_{s\to\infty}\left(Z(s)-\frac{1}{z_1^{s}}-\frac{1}{z_3^{s}}-\ldots\right)^{-1/s}
\end{equation}
and since the higher order zeros decay as $z_3^{-s}$ faster than $Z(s)-z_1^{-s}$, we then have

\begin{equation}\label{eq:20}
z_2=\lim_{s\to\infty}\left(Z(s)-\frac{1}{z_1^{s}}\right)^{-1/s}.
\end{equation}
And continuing on to next zero, by solving for $z_3$ in (36) and by removing the next dominant terms, we obtain

\begin{equation}\label{eq:20}
z_3=\lim_{s\to\infty}\left(Z(s)-\frac{1}{z_1^{s}}-\frac{1}{z_2^{s}}\right)^{-1/s},
\end{equation}
and the process continues for the next zero. Hence in general, the recurrence formula for the $n$th+1 zero is

\begin{equation}\label{eq:20}
z_{n+1}=\lim_{s\to\infty}\left(Z(s)-\sum_{k=1}^{n}\frac{1}{z_k^{s}}\right)^{-1/s},
\end{equation}
thus all zeros up to the $n$th order must be known in order to generate the $n$th+1 zero.
\end{proof}

Let us next give an example of how Theorem 1 and Theorem 2 is applied to find a formula for zeros for any meromorphic function. For simplicity, suppose that we wish to find zeros of the function

\begin{equation}\label{eq:20}
f(s)= \frac{\sin(\pi s)}{\pi s}=0.
\end{equation}
We know in advance that the zeros are just integer multiples: $z_n=\pm n$ (for any non-zero integer $n=1,2,3\ldots$) and zero at origin is already canceled. But now, if we apply the $m$th log-derivative formula to (52), then we get a generalized zeta series of over the zeros as

\begin{equation}\label{eq:20}
Z(m)=-\frac{1}{(m-1)!}\frac{d^{m}}{ds^{m}}\log\left[\frac{\sin(\pi s)}{\pi s}\right]\Bigr\rvert_{s\to 0}
\end{equation}
which is equal to the zeta series over all zeros (including the negative ones) as such

\begin{equation}\label{eq:20}
Z(m) =\ldots+\frac{1}{(-3)^m}+\frac{1}{(-2)^m}+\frac{1}{(-1)^m}+\frac{1}{(1)^m}+\frac{1}{(2)^m}+\frac{1}{(3)^m}+\ldots.
\end{equation}
From this we can deduce that even values become double of a half the side of zeros (which in this example is equivalent to $\zeta(s)$) as
\begin{equation}\label{eq:20}
Z(2m) = 2\zeta(2m)
\end{equation}
and the odd values cancel
\begin{equation}\label{eq:20}
Z(2m+1) = 0.
\end{equation}
In view of this, the Euler's formula (3) is
\begin{equation}\label{eq:9}
\zeta(2k) = \frac{\mid B_{2k}\mid}{2(2k)!}(2\pi)^{2k},
\end{equation}
is an example of a closed-form representation of $\zeta(2k)$ that is not involving zeros directly, i.e., the positive integers. But such a formula may not always be available, so we will just use the $m$th log-derivative formula (53) as the main closed-form representation of $Z(s)$. Also, on a side note, the Bernoulli numbers are expansions coefficients of another function

\begin{equation}\label{eq:9}
\frac{x}{e^x-1}=\sum_{n=0}^{\infty}B_n\frac{x^n}{n!}
\end{equation}
where they can be similarly obtained by $m$th differentiation:

\begin{equation}\label{eq:9}
B_{m}=\lim_{x\to 0}\Big\{\frac{d^{m}}{dx^{m}}\frac{x}{e^x-1}\Big\}.
\end{equation}
Hence in essence, the Euler's formula (57) is just a transformed version of (53), but it just happens that Bernoulli numbers are rational constants. And so, when putting this together, we obtain a full solution to (52) as a recurrence relation

\begin{equation}\label{eq:20}
z_{n+1}=\lim_{m\to\infty}\left[-\frac{1}{2(2m-1)!}\frac{d^{(2m)}}{ds^{(2m)}}\log\left[\frac{\sin(\pi s)}{\pi s}\right]\Bigr\rvert_{s\to 0}-\sum_{k=1}^{n}\frac{1}{z_k^{2m}}\right]^{-\frac{1}{2m}}
\end{equation}
where a $2m$ limit value ensures that it is even, and so, an additional factor of $\frac{1}{2}$ is added due to (54).
The principal zero is:
\begin{equation}\label{eq:20}
z_{1}=\lim_{m\to\infty}\left[-\frac{1}{2(2m-1)!}\frac{d^{(2m)}}{ds^{(2m)}}\log\left[\frac{\sin(\pi s)}{\pi s}\right]\Bigr\rvert_{s\to 0}\right]^{-\frac{1}{2m}}
\end{equation}
and a numerical computation for $m=20$ yields

\begin{equation}\label{eq:20}
z_{1}=0.999999999999\underline{9}7726263\dots,
\end{equation}
which is accurate to 13 digits after the decimal place, and the script to compute it in PARI is presented in Listing 1. The key aspect of the script is the $\textbf{derivnum}$ function for computing the $m$th derivative very accurately which will be very useful for the rest of this article. The next zero is recursively found as
\begin{equation}\label{eq:20}
z_{2}=\lim_{m\to\infty}\left[-\frac{1}{2(2m-1)!}\frac{d^{(2m)}}{ds^{(2m)}}\log\left[\frac{\sin(\pi s)}{\pi s}\right]\Bigr\rvert_{s\to 0}-\frac{1}{1^{2m}}\right]^{-\frac{1}{2m}},
\end{equation}
but we must know the first zero in advance in order to compute the next zero. A numerical computation for $m=20$ yields

\begin{equation}\label{eq:20}
z_{2}=1.9999999\underline{9}547806838689\dots
\end{equation}
which is accurate to 8 digits after the decimal place. Then, the next zero in the sequence is
\begin{equation}\label{eq:20}
z_{3}=\lim_{m\to\infty}\left[-\frac{1}{2(2m-1)!}\frac{d^{(2m)}}{ds^{(2m)}}\log\left[\frac{\sin(\pi s)}{\pi s}\right]\Bigr\rvert_{s\to 0}-\frac{1}{1^{2m}}-\frac{1}{2^{2m}}\right]^{-\frac{1}{2m}}
\end{equation}
but we must know the first two zeros in advance in order to compute the next zero. A numerical computation for $m=20$ yields

\begin{equation}\label{eq:20}
z_{3}=2.99999\underline{9}24565967669286\dots
\end{equation}
which is accurate to 6 digits after the decimal place, and so on. We see that the accuracy becomes lesser for higher zeros, and so the limit variable $m$ has to be increased to get better accuracy. One can then continue this process and extract the $z_{n+1}$ zero.

As a second example, we find a formula for zeros of the Bessel function of the first kind

\begin{equation}\label{eq:20}
J_{\nu}(x)=\sum_{n=0}^{\infty}\frac{(-1)^n}{\Gamma(n+\nu+1)n!}\left(\frac{x}{2}\right)^{2n+2}
\end{equation}
for all $\nu> -1$ real orders. We denote the zeros of (67) as $x_{\nu,n}$ where $n$ is the index variable for the $n$th zero for $\nu$ order of the Bessel function. The Weierstrass product representation of (67) is
\begin{equation}\label{eq:20}
J_{\nu}(x)=\frac{1}{\Gamma(\nu+1)}\left(\frac{x}{2}\right)^{\nu}\prod_{n=1}^{\infty}\left(1-\frac{x^2}{x^2_{\nu,n}}\right)
\end{equation}
for $\nu>-1$ involving the zeros directly [1, p.370]. To find the roots of this function, we apply Theorem 1 to obtain a generalized zeta series over the Bessel zeros

\begin{equation}\label{eq:20}
\begin{aligned}
 Z_{\nu}(2m)&=-\frac{1}{2(2m-1)!}\frac{d^{(2m)}}{dx^{(2m)}}\log\left[\frac{J_{\nu}(x)}{x^{\nu}}\right]\Bigr\rvert_{x\to 0}\\
            &=\sum_{n=1}^{\infty}\frac{1}{x_{\nu,n}^{2m}},
\end{aligned}
\end{equation}
which is essentially the $2m$th derivative of $\log[J_{\nu}(x)/x^{\nu}]$ evaluated at $x=0$, where it is taken in a limiting sense $x\to 0$ as to avoid division by zero. Here we cancel the zero $x^{\nu}$ at origin as shown in previous section. This will prevent any artifacts of $x^{\nu}$ in (68) from being generated by the log-differentiation. Furthermore, the Bessel function is even; hence we consider the $2m$ values.

\lstset{language=C,deletekeywords={for,double,return},caption={PARI script for computing the first zero of (52) by formula (61).},label=DescriptiveLabel,captionpos=b}
\begin{lstlisting}[frame=single]
{
   m = 20;              \\ set limit variable
   delta = 10^(-100);   \\ take s->0 limit

   \\ compute generalized zeta series
   A = -derivnum(s=delta,log(sin(Pi*s)/(Pi*s)),2*m);
   B = 1/factorial(2*m-1);
   Z = A*B/2;

   \\ compute the first zero
   z1 = (Z)^(-1/(2*m));
   print(z1);
}
\end{lstlisting}

The first few special values of $Z_{\nu}(2m)$ for even orders are:
\begin{equation}\label{eq:20}
\begin{aligned}
Z_{\nu}(2) &= \frac{1}{4(\nu+1)},\\
\\
Z_{\nu}(4) &= \frac{1}{16(\nu+1)^2(\nu+2)},\\
\\
Z_{\nu}(6) &= \frac{1}{32(\nu+1)^3(\nu+2)(\nu+3)},\\
\\
Z_{\nu}(8) &= \frac{5\nu+11}{256(\nu+1)^4(\nu^2+2)^2(\nu+3)(\nu+4)},\\
\\
Z_{\nu}(10) &= \frac{7\nu+19}{512(\nu+1)^5(\nu^2+2)^2(\nu+3)(\nu+4)(\nu+5)},\\
\\
Z_{\nu}(12) &= \frac{21\nu^3+181\nu^2+513\nu+473}{2048(\nu+1)^6(\nu^2+2)^3(\nu+3)^2(\nu+4)(\nu+5)(\nu+6)},
\end{aligned}
\end{equation}
and so on. These generated values are rational functions of the Bessel order $\nu>-1$ for even orders, and this implies that if $\nu>-1$ is rational, so is $Z_{\nu}(2m)$. These formulas for $Z_{\nu}(2m)$ were generated using another recurrence relation found Sneddon in [18] as an alternative to (69), which is given in Appendix B. In Table 1, we give the values of $Z_{\nu}(2m)$ for different $m$ and $\nu$.

\begin{table}[hbt!]
\caption{Generated values of $Z_{\nu}(m)$ for different $m$ and $\nu$.}
\centering
\def\arraystretch{2}
\begin{tabular}{|c| c| c| c| c|} \hline
m &$Z_{0}(m)$  & $Z_{1}(m)$  & $Z_{2}(m)$  & $Z_{3}(m)$   \\ \hline
2 & \Large $\frac{1}{4}$ & \Large $\frac{1}{8}$ & \Large $\frac{1}{12}$ & \Large $\frac{1}{16}$ \\ \hline
4 & \Large $\frac{1}{32}$ & \Large $\frac{1}{192}$ & \Large $\frac{1}{576}$  & \Large $\frac{1}{1280}$ \\ \hline
6 & \Large $\frac{1}{192}$ & \Large $\frac{1}{3072}$ & \Large  $\frac{1}{17280}$  & \Large $\frac{1}{61440}$ \\ \hline
8 & \Large $\frac{11}{12288}$ & \Large $\frac{1}{46080}$ & \Large $\frac{7}{3317760}$  & \Large $\frac{13}{34406400}$ \\ \hline
10 & \Large $\frac{19}{122880}$ &\Large $\frac{13}{8847360}$ & \Large  $\frac{11}{139345920}$  & \Large $\frac{1}{110100480}$ \\ \hline
12 & \Large$\frac{473}{17694720}$ & \Large $\frac{11}{110100480}$ & \Large $\frac{797}{267544166400}$  & \Large $\frac{263}{1189085184000}$  \\ \hline
\end{tabular}
\label{table:nonlin}
\end{table}

And now, by applying Theorem 2,  we obtain a full recurrence formula satisfied by the Bessel zeros:
\begin{equation}\label{eq:20}
x_{\nu,n+1}=\lim_{s\to\infty}\left(Z_{\nu}(s)-\sum_{k=1}^{n}\frac{1}{x_{\nu,k}^{s}}\right)^{-1/s}.
\end{equation}
To verify (71) numerically, we compute the principal zero using (69), since it is more efficient than (70), for the limit variable $m=250$ and $\nu=0$ which results in
\begin{equation}\label{eq:20}
\begin{aligned}
x_{0,1} &=\lim_{m\to\infty}\left[Z_{0}(2m)\right]^{-\frac{1}{2m}}\\
          &= 2.404825557695772768621631879326\ldots
\end{aligned}
\end{equation}
which is accurate to 181 decimal places (we are just showing the first 30 digits). In [21, p.500-503], Rayleigh-Cayley generated values for $Z_{\nu}(2m)$ as shown in (70) and extended Euler's original work and computed the smallest Bessel zero using this method in papers dating back to year 1874. Moving on, the next zero is found the same way, we compute
\begin{equation}\label{eq:20}
\begin{aligned}
x_{0,2} &=\lim_{m\to\infty}\left[Z_{0}(2m)-\frac{1}{x^{2m}_{0,1}}\right]^{-\frac{1}{2m}}\\
          &= 5.5200781102863106495966041128130\ldots
\end{aligned}
\end{equation}
for $m=250$, and is accurate to $99$ decimal places, but in order to ensure convergence, the first zero $x_{0,1}$ has to be known to high enough precision (usually much higher than can be efficiently computed using this method as we did above). Henceforth, as a numerical experiment, we take $x_{0,1}$ that was already pre-computed to high enough precision using more efficient means to 1000 decimal places using the standard equation solver found on mathematical software packages (such root finding algorithms are very effective, but must assume an initial condition), rather than taking the zero computed above with less accuracy.  And similarly, the third zero is computed as
\begin{equation}\label{eq:20}
\begin{aligned}
x_{0,3} &=\lim_{m\to\infty}\left[Z_{0}(2m)-\frac{1}{x^{2m}_{0,1}}-\frac{1}{x^{2m}_{0,2}}\right]^{-\frac{1}{2m}}\\
          &=8.6537279129110122169541987126609\ldots
\end{aligned}
\end{equation}
which is accurate to $68$ decimal places and that it was assumed $x_{0,1}$ and  $x_{0,2}$ was already pre-computed to high enough precision (1000 decimal places using the standard equation solver) in order to ensure convergence. Hence in general, one can continue on and keep removing all the known zeros up to the $n$th order in order to compute the $n$th+1 zero. In numerical computations, the key here is that the accuracy of the previous zeros must be much higher than the next zero in order to ensure convergence, i.e., $x_{\nu,n}^{-s}\gg x_{\nu,n+1}^{-s}$, and also one cannot use the same limit variable to compute the next zero based on the previous zero as it will cause a self-cancelation in the formula. Numerically, there is a fine balance as to how many accurate digits are available and the magnitude of the limit variable $m$ used to compute the next zero. We also note that this method is not numerically an efficient method to compute zeros, but it allows to have a true closed-form representation of the zeros, and also that one does not need to make an initial guess for the zero, as is generally the case for many root finding algorithms.

We also remarked that the generalized zeta series will be rational for rational Bessel order $\nu>-1$. Since the first zero can be written as

\begin{equation}\label{eq:20}
x_{\nu,1}=\lim_{m\to\infty}\left[Z_{\nu}(2m)\right]^{-\frac{1}{2m}},
\end{equation}
and that implies that we have a $2m$-th root of a rational number, which is irrational. Hence for most purposes, the sequence converging to the first Bessel zero for any rational $\nu>-1$ order will be irrational up to the limit variable $m$. For example, for $Z_{\nu}(2m)$ for $m=6$ in (70), we have an approximation to converging to the first zero

\begin{equation}\label{eq:20}
x_{\nu,1}\approx\left[\frac{21\nu^3+181\nu^2+513\nu+473}{2048(\nu+1)^6(\nu^2+2)^3(\nu+3)^2(\nu+4)(\nu+5)(\nu+6)}\right]^{-\frac{1}{12}}
\end{equation}
which is irrational for any rational $\nu>-1$. We remark that this is not a definite proof of the irrationality of the Bessel zero, but rather, a condition where one can set $m$ arbitrarily high as $m\to\infty$, and the sequence converging to the first Bessel zero will be irrational.

The presented methods by Theorem 1 and Theorem 2 can be effectively used to find zeros of many different functions, such as the digamma function, the Bessel functions, the Airy function, and many other finite and infinite degree polynomials (provided that the zeros are loosely well-behaved), and in the next section, we will apply this method to find the trivial and non-trivial zeros of the Riemann zeta function.

\section{Formulas for the Riemann zeros}

As described in the Introduction, the Riemann zeta function consists of trivial zeros $\rho_t$ and non-trivial zeros $\rho_{nt}$, so that the full generalized zeta series over all zeros is

\begin{equation}\label{eq:20}
Z(s)=Z_{t}(s)+Z_{nt}(s)
\end{equation}
where
\begin{equation}\label{eq:20}
Z_{t}(s)=\sum_{n=1}^{\infty}\frac{1}{\rho^s_{t,n}}
\end{equation}
and
\begin{equation}\label{eq:20}
Z_{nt}(s)=\sum_{n=1}^{\infty}\left(\frac{1}{\rho^s_{nt,n}}+\frac{1}{\bar{\rho}^s_{nt,n}}\right)
\end{equation}
are the trivial and non-trivial components, where they are taken in conjugate-pairs. Now, when applying the root-extraction to (77) directly is not straightforward. First we observe that

\begin{equation}\label{eq:20}
O[Z_{t}(s)]\gg O[Z_{nt}(s)]
\end{equation}
as $s\to\infty$, since

\begin{equation}\label{eq:20}
\frac{1}{2^s}\gg \left|\frac{1}{(\frac{1}{2}+it_1)^s}+\frac{1}{(\frac{1}{2}-it_1)^s}\right|
\end{equation}
or roughly
\begin{equation}\label{eq:20}
\frac{1}{2^s}\gg \frac{1}{t_1^s}.
\end{equation}
Hence, $Z_{t}(s)$ dominates the limit in relation to $Z_{nt}(s)$.

Next, we develop a formula for trivial zeros using the root-extraction method. It is first convenient to remove the pole of $\zeta(s)$ by inspecting the Weierstrass infinite product (8) to consider the entire function

\begin{equation}\label{eq:20}
f(s)=\zeta(s)(s-1),
\end{equation}
then the $m$th log-derivative gives the generalized zeta series over all zeros as

\begin{equation}\label{eq:20}
Z(s)=Z_{t}(m)+Z_{nt}(m)=-\frac{1}{(m-1)!}\frac{d^{m}}{ds^{m}}\log\left[\zeta(s)(s-1)\right]\Bigr\rvert_{s\to 0}.
\end{equation}
As a result, the recurrence formula for trivial zeros is:
\begin{equation}\label{eq:20}
\begin{aligned}
\rho_{t,n+1}=-\lim_{m\to\infty}\Bigg[-\frac{1}{(2m-1)!}&\frac{d^{(2m)}}{ds^{(2m)}}\log\left[\zeta(s)(s-1)\right]\Bigr\rvert_{s\to 0}-\sum_{k=1}^{n}\frac{1}{\rho_{t,k}^{2m}}+\\
&-\sum_{k=1}^{\infty}\left(\frac{1}{\rho^{2m}_{nt,k}}+\frac{1}{\bar{\rho}^{2m}_{nt,k}}\right)\Bigg]^{-\frac{1}{2m}}.
\end{aligned}
\end{equation}
We first note that we have used a $2m$ limiting value, but in this case, one could also use an odd limit value, but then an alternating sign $(-1)^{m}$ is needed in the recurrence formula to account for positive and negative terms, but we wish to omit that. Secondly, we've also added a negative sign in front to account for a negative branch in $s$-domain restricted to $-0.5<\Re(s)<\{0.0091598901\ldots \cup \Im(s)=0\}$, so that trivial zeros will come out negative. This sign change will be more apparent in later sections. Thirdly, there is a contribution due to the conjugate-pairs of non-trivial zeros. Initially, the $Z_{t}(s)$ is the dominant lowest term; hence the contribution due to non-trivial zeros is negligible and may be dropped, but during the course of removing the trivial zeros recursively in order to generate the $n$th+1 trivial zero, we eventually arrive at a point where the first non-trivial zero term dominates the limit as we will see shortly.

First we begin to verify the trivial zero formula numerically and we compute the principal zero as

\begin{equation}\label{eq:20}
\rho_{t,1}=-\lim_{m\to\infty}\left[-\frac{1}{(2m-1)!}\frac{d^{(2m)}}{ds^{(2m)}}\log\left[\zeta(s)(s-1)\right]\Bigr\rvert_{s\to 0}\right]^{-\frac{1}{2m}},
\end{equation}
and the the script in PARI is shown in Listing 2. By running the script for a limit variable $m=20$ we compute

\begin{equation}\label{eq:20}
\rho_{t,1}=-1.999999999999\underline{9}54525260798701750 \ldots
\end{equation}
which is a close approximation to within $13$ decimal places. The next zero is recursively found as

\begin{equation}\label{eq:20}
\rho_{t,2}=-\lim_{m\to\infty}\left[-\frac{1}{(2m-1)!}\frac{d^{(2m)}}{ds^{(2m)}}\log\left[\zeta(s)(s-1)\right]\Bigr\rvert_{s\to 0}-\frac{1}{(-2)^{2m}}\right]^{-\frac{1}{2m}},
\end{equation}
but this time we must know the first zero in advance, so that we now compute
\begin{equation}\label{eq:20}
\rho_{t,2}= -3.9999999\underline{9}0956136773796946421201\ldots
\end{equation}
accurate to within 8 decimal places, and similarly, the third zero is

\begin{equation}\label{eq:20}
\rho_{t,3}=-\lim_{m\to\infty}\left[-\frac{1}{(2m-1)!}\frac{d^{(2m)}}{ds^{(2m)}}\log\left[\zeta(s)(s-1)\right]\Bigr\rvert_{s\to 0}-\frac{1}{(-2)^{2m}}-\frac{1}{(-4)^{2m}}\right]^{-\frac{1}{2m}},
\end{equation}
but this time we must know first two previous zeros, so that we compute

\begin{equation}\label{eq:20}
\rho_{t,3}= -5.9999\underline{9}8491319353326392575769396\ldots,
\end{equation}
which is accurate to 5 decimal places. We see that the accuracy progressively reduces for higher zeros, and so, the limit variable $m$ has to be increased to get better accuracy.

\lstset{language=C,deletekeywords={for,double,return},caption={PARI script for computing the first trivial zero using (86).},label=DescriptiveLabel,captionpos=b}
\begin{lstlisting}[frame=single]
{
   m = 20; \\ set limit variable

   \\ compute generalized zeta series
   A = -derivnum(s=0,log(zeta(s)*(s-1)),2*m);
   B = 1/factorial(2*m-1);
   Z = A*B;

   \\ compute the first trivial zero
   rho_t1 = Z^(-1/(2*m));
   print(rho_t1);
}
\end{lstlisting}

We keep repeating this, but now re-compute with $m=200$ to get better accuracy until we get to the $7$th trivial zero, which should be $-14$, so by computing

\begin{equation}\label{eq:20}
\begin{aligned}
\rho_{t,7}=-\lim_{m\to\infty}&\Bigg[-\frac{1}{(2m-1)!}\frac{d^{(2m)}}{ds^{(2m)}}\log\left[\zeta(s)(s-1)\right]\Bigr\rvert_{s\to 0}-\frac{1}{(-2)^{2m}}-\frac{1}{(-4)^{2m}}-\frac{1}{(-6)^{2m}}+\\
                            &-\frac{1}{(-8)^{2m}}-\frac{1}{(-10)^{2m}}-\frac{1}{(-12)^{2m}}\Bigg]^{-\frac{1}{2m}}
\end{aligned}
\end{equation}
yields
\begin{equation}\label{eq:20}
\rho_{t,7}= -14.000007669086476837019928729271\ldots,
\end{equation}
where we notice that it's becoming less accurate. So when we compute the next zero

\begin{equation}\label{eq:20}
\begin{aligned}
\rho_{t,8}=-\lim_{m\to\infty}&\Bigg[-\frac{1}{(2m-1)!}\frac{d^{(2m)}}{ds^{(2m)}}\log\left[\zeta(s)(s-1)\right]\Bigr\rvert_{s\to 0}-\frac{1}{(-2)^{2m}}-\frac{1}{(-4)^{2m}}-\frac{1}{(-6)^{2m}}+\\
                            &-\frac{1}{(-8)^{2m}}-\frac{1}{(-10)^{2m}}-\frac{1}{(-12)^{2m}}-\frac{1}{(-14)^{2m}}\Bigg]^{-\frac{1}{2m}},
\end{aligned}
\end{equation}
we get
\begin{equation}\label{eq:20}
\rho_{t,8}= 14.2975976399\ldots-i 0.1122953782\ldots,
\end{equation}
where it is seen no longer converging to $-16$ as expected. However, due to equality (82), the first non-trivial zero term is dominating the limit, so if we incorporate the first non-trivial zero term
\begin{equation}\label{eq:20}
\begin{aligned}
\rho_{t,8}=-\lim_{m\to\infty}&\Bigg[-\frac{1}{(2m-1)!}\frac{d^{(2m)}}{ds^{(2m)}}\log\left[\zeta(s)(s-1)\right]\Bigr\rvert_{s\to 0}-\frac{1}{(-2)^{2m}}-\frac{1}{(-4)^{2m}}-\frac{1}{(-6)^{2m}}+\\
                            &-\frac{1}{(-8)^{2m}}-\frac{1}{(-10)^{2m}}-\frac{1}{(-12)^{2m}}-\frac{1}{(-14)^{2m}}-\frac{1}{(\frac{1}{2}+it_1)^{2m}}-\frac{1}{(\frac{1}{2}-it_1)^{2m}}\Bigg]^{-\frac{1}{2m}}
\end{aligned}
\end{equation}
as to remove its contribution, then we re-compute the trivial zero again
\begin{equation}\label{eq:20}
\rho_{t,8}= -15.99999999999999999999\underline{9}861627109\ldots
\end{equation}
as desired. Hence this process continues for the $n$th+1 trivial zero, but for higher trivial zero terms, more non-trivial terms have to be removed in this fashion.

Moving on next, we seek to find a formula for non-trivial zeros, but according to (82), the trivial zeros dominate the generalized zeta series, and also that non-trivial zeros are complex will make the root extraction more difficult. So we consider the Weierstrass/Hadamard product (8) again but re-write it in a simpler form

\begin{equation}\label{eq:20}
\zeta(s)=\frac{\pi^{s/2}}{2(s-1)\Gamma(1+\frac{s}{2})}\prod_{\rho_{nt}}^{}\left(1-\frac{s}{\rho_{nt}}\right),
\end{equation}
as to compress the trivial zeros by the gamma function which has the Weierstrass product

\begin{equation}\label{eq:20}
\Gamma(s)=\frac{e^{-\gamma s}}{s}\prod_{n=1}^{\infty}\left(1+\frac{s}{n}\right)^{-1}e^{\frac{s}{n}}
\end{equation}
and $\Gamma(s)$ is also known to have many representations making it a useful function.
We now consider the Riemann xi function

\begin{equation}\label{eq:20}
\begin{aligned}
\xi(s) &=\frac{(s-1)\Gamma(1+\frac{s}{2})}{\pi^{s/2}}\zeta(s)\\
     &=\frac{1}{2}\prod_{\rho_{nt}}^{}\left(1-\frac{s}{\rho_{nt}}\right)
\end{aligned}
\end{equation}
as to remove all trivial zero terms (and any other remaining terms) in order to obtain an exclusive access to non-trivial zeros. Now, when applying the $m$th log-derivative to $\xi(s)$ we get

\begin{equation}\label{eq:20}
\begin{aligned}
Z_{nt}(m) &=-\frac{1}{(m-1)!}\frac{d^{m}}{ds^{m}}\log\left[\frac{(s-1)\Gamma(1+\frac{s}{2})}{\pi^{s/2}}\zeta(s)\right]\Bigr\rvert_{s\to 0}\\
           \\
          &= \sum_{n=1}^{\infty}\Bigg[\frac{1}{(\sigma_n+it_n)^{m}}+\frac{1}{(\sigma_n-it_n)^{m}}\Bigg],
\end{aligned}
\end{equation}
which is valid for $m\geq 1$. The first few special values of this series are:

\begin{equation}\label{eq:9}
\begin{aligned}
    Z_{nt}(1) &= 1-\frac{1}{2}\eta_0-\frac{1}{2}\log(4\pi)\\
    &= 1+\frac{1}{2}\gamma-\frac{1}{2}\log(4\pi)\\
    &= 0.023095708966121033814310247906\ldots, \\
    &\\
    Z_{nt}(2) &= 1+\eta_1-\frac{1}{8}\pi^2\\
    &= 1+\gamma^2+2\gamma_1-\frac{1}{8}\pi^2\\
    &=  -0.046154317295804602757107990379\dots,\\
    &\\
    Z_{nt}(3) &= 1-\eta_2-\frac{7}{8}\zeta(3)\\
    &= 1+\gamma^3+3\gamma\gamma_1+\frac{3}{2}\gamma_2-\frac{7}{8}\zeta(3)\\
    &= -0.000111158231452105922762668238\dots, \\
    &\\
    Z_{nt}(4) &= 1+\eta_3-\frac{1}{96}\pi^4\\
    &= 1+\gamma^4+4\gamma^2\gamma_1+2\gamma_1^2+2\gamma\gamma_2+\frac{2}{3}\gamma_3-\frac{1}{96}\pi^4\\
    &= 0.000073627221261689518326771307\dots,\\
    &\\
    Z_{nt}(5) &= 1-\eta_4-\frac{31}{32}\zeta(5)\\
    &= 1+\gamma^5+5\gamma^3\gamma_1+\frac{5}{2}\gamma^2\gamma_2+\frac{5}{2}\gamma_1\gamma_2+5\gamma\gamma_1^2+\frac{5}{6}\gamma\gamma_3+\frac{5}{24}\gamma_4-\frac{31}{32}\zeta(5)\\
    &= 0.000000715093355762607735801093\dots.
\end{aligned}
\end{equation}
The value for $Z_{nt}(1)$ is commonly known throughout the literature [7], and values for $Z_{nt}(m)$ for $m>1$ also have a closed-form formula

\begin{equation}\label{eq:20}
\begin{aligned}
Z_{nt}(m) = 1-(-1)^m2^{-m}\zeta(m)-\frac{\log(|\zeta|)^{(m)}(0)}{(m-1)!}
\end{aligned}
\end{equation}
valid for $m>1$ and is given by Matsuoka [16, p.249], Lehmer [15, p.23], and Voros in [20, p.73]. This formula is valid for even and odd index variable $m$. Another representation of (101) is given by

\begin{equation}\label{eq:20}
\begin{aligned}
Z_{nt}(m) = 1- (1 - 2^{-m})\zeta(m) + (-1)^m\eta_{m-1}
\end{aligned}
\end{equation}
for $m>1$ where $\eta_n$ are the Laurent expansion coefficients of the series

\begin{equation}\label{eq:20}
-\frac{\zeta'(s)}{\zeta(s)}=\frac{1}{s-1}+\sum_{n=0}^{\infty}\eta_n(s-1)^n.
\end{equation}
The first few values are:
\begin{equation}\label{eq:9}
\begin{aligned}
    \eta_0 &= -0.57721566490153286061\ldots,\\
    &\\
    \eta_1 &=  0.18754623284036522460\ldots,\\
    &\\
    \eta_2 &=  -0.051688632033192893802\ldots,\\
    &\\
    \eta_3 &=  0.014751658825453744065\ldots,\\
    &\\
    \eta_4 &=  -0.0045244778884953787412\ldots.\\
\end{aligned}
\end{equation}
These eta constants are probably less familiar than the Stieltjes constants $\gamma_n$, and one has $-\eta_0=\gamma_0=\gamma$, but more about its relation to Stieltjes constants a little later, as our immediate goal is to express concisely

\begin{equation}\label{eq:20}
\eta_n=\frac{(-1)^{n}}{n!}\lim_{k\to\infty}\Bigg\{\sum_{l=1}^{k}\Lambda(l)\frac{\log^n(l)}{l}-\frac{\log^{n+1}(k)}{n+1}\Bigg\}
\end{equation}
as found in [20,p.25], where the von Mangoldt's function is defined as

\begin{equation}
\Lambda(n)= \left \{
\begin{aligned}
&\log p, &&\text{if}\ n=p^k \text{ for some prime and integer } k\geq 1 \\
&0 && \text{otherwise},
\end{aligned} \right.
\end{equation}
which is purely in terms of primes. Hence the expansion coefficients $\eta_n$ are written as a function of primes, and then it follows that the generalized zeta series $Z_{nt}(m)$ can also be represented in terms of primes. We note, however, that the limit identity (107) is extremely slow to converge, requiring billions of prime terms to compute to only a few digits, making it very impractical.

Continuing on, we note that the generalized zeta series $Z_{nt}(s)$ is over all zeros, but in order to extract the non-trivial zeros, we will have to assume (RH) in order to remove the real part of $\frac{1}{2}$. It is not readily possible to separate the reciprocal of conjugate-pairs of non-trivial zeros from (101), but what we can do is to consider a secondary zeta function over the complex magnitude, or modulus, squared of non-trivial zeros as

\begin{equation}\label{eq:20}
\begin{aligned}
Z_{|nt|}(s) &= \sum_{n=1}^{\infty}\frac{1}{|\rho_{nt}|^{2s}}=\sum_{n=1}^{\infty}\frac{1}{(\frac{1}{4}+t_n^2)^{s}}
\end{aligned}
\end{equation}
and then by applying Theorem 2, we can obtain non-trivial zeros

\begin{equation}\label{eq:20}
t_{n+1}=\lim_{m\to\infty}\left[\left(Z_{|nt|}(m)-\sum_{k=1}^{n}\frac{1}{(\frac{1}{4}+t_k^2)^{m}}\right)^{-1/m}-\frac{1}{4}\right]^{1/2}
\end{equation}
recursively. We now need a closed-form formula for $Z_{|nt|}(m)$ which we find can be related to $Z_{nt}(m)$ in several ways. The first way is by an asymptotic formula

\begin{equation}\label{eq:20}
Z_{|nt|}(m) \sim \frac{1}{2}[Z_{nt}^2(m)-Z_{nt}(2m)]
\end{equation}
as $m\to \infty$. This immediately leads to a formula for the principal zero

\begin{equation}\label{eq:20}
t_{1}=\lim_{m\to\infty}\left[\left(\frac{1}{2}Z_{nt}^2(m)-\frac{1}{2}Z_{nt}(2m)\right)^{-1/m}-\frac{1}{4}\right]^{1/2},
\end{equation}
and a full recurrence formula

\begin{equation}\label{eq:20}
t_{n+1}=\lim_{m\to\infty}\left[\left(\frac{1}{2}Z_{nt}^2(m)-\frac{1}{2}Z_{nt}(2m)-\sum_{k=1}^{n}\frac{1}{(\frac{1}{4}+t_k^2)^{m}}\right)^{-1/m}-\frac{1}{4}\right]^{1/2}
\end{equation}
for non-trivial zeros as we have shown in [13, p.9-14] and Matsuoka in [16], provided that all the zeros are assumed to lie on the critical line. A detailed numerical computation of $t_1$ by equation (112) is shown in Table 2 and a script in PARI in Listing 3, where we can observe convergence to $t_1$ as the limit variable $m$ increases from low to high, and at $m=100$ we get over $16$ decimal places. A detailed numerical computation for higher $m$ is summarized in [13].

\begin{table}[hbt!]
\caption{The computation of $t_1$ by equation (112) for different $m$.} % title of Table
\centering % used for centering table
\begin{tabular}{c c c} % centered columns (4 columns)
\hline\hline %inserts double horizontal lines
m & $t_1$ (First 30 Digits)  & Significant Digits\\ [0.5ex] % inserts table
%heading
\hline % inserts single horizontal line
$2$ & 5.561891787634141032446012810136 & 0 \\
$3$ & 13.757670503723662711511861003244 & 0 \\
$4$ & 12.161258748655529488677538477512 & 0 \\
$5$ & 14.075935317783371421926582853327 & 0 \\
$6$ & 13.579175424560852302300158195372 & 0 \\
$7$ & 14.\underline{1}16625853057249358432588137893 & 1 \\
$8$ & 13.961182494234115467191058505224 & 0 \\
$9$ & 14.\underline{1}26913415083941105873032355837 & 1 \\
$10$ &14.077114859427980275510456957007 & 0 \\
$15$ &14.1\underline{3}3795710050725394699252528681 & 2 \\
$20$ &14.13\underline{4}370485636531946259958638820 & 3 \\
$25$ &14.134\underline{7}00629574414322701677282886 & 4 \\
$50$ &14.13472514\underline{1}835685792188021492482 & 9 \\
$100$&14.134725141734693\underline{7}89329888107217 & 16
\\ [1ex] % [1ex] adds vertical space
\hline %inserts single line
\end{tabular}
\label{table:nonlin} % is used to refer this table in the text
\end{table}

\newpage

\lstset{language=C,deletekeywords={for,double},caption={PARI script for computing equation (112).},label=DescriptiveLabel,captionpos=b}
\begin{lstlisting}[frame=single]
{
    m1 = 250;  \\ set limit variables
    m2 = 2*m1;

    \\ compute parameters A1 to C1 for Z1
    A1 =  derivnum(x=0,log(abs(zeta(x))),m1);
    B1 = 1/factorial(m1-1);
    C1 = 1-(-1)^m1*2^(-m1)*zeta(m1);
    Z1 = C1-A1*B1;

    \\ compute parameters A2 to C2 for Z2
    A2 =  derivnum(x=0,log(abs(zeta(x))),m2);
    B2 = 1/factorial(m2-1);
    C2 = 1-(-1)^m2*2^(-m2)*zeta(m2);
    Z2 = C2-A2*B2;

    t1 = (((Z1^2-Z2)/2)^(-1/m1)-1/4)^(1/2);
    print(t1);
}
\end{lstlisting}

\noindent The next higher order zeros are recursively found as

\begin{equation}\label{eq:20}
t_{2}=\lim_{m\to\infty}\left[\left(\frac{1}{2}Z_{nt}^2(m)-\frac{1}{2}Z_{nt}(2m)-\frac{1}{(\frac{1}{4}+t_1^2)^{m}}\right)^{-1/m}-\frac{1}{4}\right]^{\frac{1}{2}},
\end{equation}
and the next is

\begin{equation}\label{eq:20}
t_{3}=\lim_{m\to\infty}\left[\left(\frac{1}{2}Z_{nt}^2(m)-\frac{1}{2}Z_{nt}(2m)-\frac{1}{(\frac{1}{4}+t_1^2)^{m}}-\frac{1}{(\frac{1}{4}+t_2^2)^{m}}\right)^{-1/m}-\frac{1}{4}\right]^{\frac{1}{2}},
\end{equation}
and so on, but the numerical computation is even more difficult, and so, the limit variable $m$ has to be increased to a very large value.
We can now express the non-trivial zeros in terms of other constants. By substituting the eta constants (104) to (112), we obtain the first zero:

\begin{equation}\label{eq:20}
\begin{aligned}
t_{1} =\lim_{m\to\infty}&\Bigg[\Bigg(\frac{1}{2}\Big(1- (1 - 2^{-m})\zeta(m) + (-1)^m\eta_{m-1}\Big)^2+\\
         & -\frac{1}{2}\Big(1- (1 - 2^{-2m})\zeta(2m) + \eta_{2m-1}\Big)\Bigg)^{-1/m}-\frac{1}{4}\Bigg]^{\frac{1}{2}}.
\end{aligned}
\end{equation}

\noindent For example, if we let $m=10$ then we can generate an approximation converging to $t_1$ as

\begin{equation}\label{eq:20}
\begin{aligned}
t_{1} &\approx \Bigg[\Bigg(-\frac{31}{2903040}\pi^{10}(1+\eta_9)+\eta_9+\frac{1}{2}\eta^2_9-\frac{1}{2}\eta_{19}+\frac{10568303}{92681981263872000}\pi^{20}\Bigg)^{-\frac{1}{10}}-\frac{1}{4}\Bigg]^{\frac{1}{2}}\\
\\
&\approx 14.07711485942798027551\ldots
\end{aligned}
\end{equation}
where it is seen converging to $t_1$. For this computation, we compute the eta constants:
\begin{equation}\label{eq:20}
\begin{aligned}
\eta_{9} &=  0.000017041357047110641032\ldots,\\
\\
\eta_{19} &= 0.000000000286807697455596\ldots
\end{aligned}
\end{equation}
using $m$th differentiation of (105). As mentioned before, one could alternatively compute these eta constants using primes by (107), but the number of primes required now would be in trillions (making it very impractical to compute on a standard workstation), but the main point is that the non-trivial zeros can be expressed in terms of primes, namely, by equations (107), (105) and (112).

Furthermore, a recurrence relation for the eta constants in terms of Stieltjes constants is
\begin{equation}\label{eq:20}
\eta_n=(-1)^{n+1}\left[\frac{n+1}{n!}\gamma_n+\sum_{k=0}^{n-1}\frac{(-1)^{k-1}}{(n-k-1)!}\eta_k\gamma_{n-k-1}\right]
\end{equation}
found in Coffey [6, p.532]. Using these relations, the non-trivial zeros can be written in terms of Stieltjes constants. For the first zero $t_1$ and $m=2$, we obtain an expansion:

\begin{equation}\label{eq:20}
\begin{aligned}
t_1 &\approx \left[\left(2\gamma_1-\frac{\pi^2\gamma_1}{4}+\gamma_1^2-\gamma\gamma_2-\frac{\gamma_3}{3}+\gamma^2-\frac{\pi^2}{8}-\frac{\gamma^2\pi^2}{8}+\frac{5\pi^4}{384}\right)^{-\frac{1}{2}}-\frac{1}{4}\right]^{\frac{1}{2}}
\end{aligned}
\end{equation}
\begin{equation}\label{eq:20}
t_1\approx 5.561891787634141032446012810136.\ldots\nonumber
\end{equation}
For $m=3$ we obtain an expansion:

\begin{equation}\label{eq:20}
\begin{aligned}
t_1 \approx & \Bigg[\Bigg(\gamma^3-\frac{21}{8}\gamma\gamma_1\zeta(3)-\frac{21}{16}\gamma_2\zeta(3)+3\gamma\gamma_1-\gamma_1^3+\frac{3}{2}\gamma_2+\frac{3}{2}\gamma\gamma_1\gamma_2+\\
&+\frac{3}{4}\gamma_2^2-\frac{1}{2}\gamma^2\gamma_3-\frac{1}{2}\gamma_1\gamma_3-\frac{1}{8}\gamma\gamma_4-\frac{1}{40}\gamma_5-\frac{7}{8}\zeta(3)-\frac{7}{8}\gamma^3\zeta(3)+\\
&+\frac{49}{128}\zeta(3)^2+\frac{1}{1920}\pi^6\Bigg)^{-\frac{1}{3}}-\frac{1}{4}\Bigg]^{\frac{1}{2}}
\end{aligned}
\end{equation}

\begin{equation}\label{eq:20}
t_1\approx 13.757670503723662711511861003244\ldots.\nonumber
\end{equation}
For $m=4$ we obtain an expansion:

\begin{equation}\label{eq:20}
\begin{aligned}
t_1 \approx & \Bigg[\Bigg(4\gamma^2\gamma_1-\frac{1}{24}\gamma^2\gamma_1\pi^4+2\gamma_1^2-\frac{1}{48}\pi^4\gamma_1^2+\gamma_1^4+2\gamma\gamma_2-\frac{1}{48}\gamma\gamma_2\pi^4+\\
&-2\gamma\gamma_1^2\gamma_2+\frac{1}{2}\gamma^2\gamma_2^2-\gamma_1\gamma_2^2+\frac{2}{3}\gamma_3-\frac{1}{144}\pi^4\gamma_3+\frac{2}{3}\gamma^2\gamma_1\gamma_3+\frac{2}{3}\gamma_1^2\gamma_3+\\
& +\frac{2}{3}\gamma\gamma_2\gamma_3+\frac{1}{6}\gamma_3^2-\frac{1}{6}\gamma^3\gamma_4-\frac{1}{3}\gamma\gamma_1\gamma_4-\frac{1}{12}\gamma_2\gamma_4-\frac{1}{30}\gamma^2\gamma_5-\frac{1}{180}\gamma\gamma_6+\\
&-\frac{1}{1260}\gamma_7+\gamma^4-\frac{\pi^4}{96}-\frac{1}{96}\pi^4\gamma^4+\frac{23}{215040}\pi^8\Bigg)^{-\frac{1}{4}}-\frac{1}{4}\Bigg]^{\frac{1}{2}}
\end{aligned}
\end{equation}

\begin{equation}\label{eq:20}
t_1\approx 12.161258748655529488677538477512\ldots.\nonumber
\end{equation}
Hence, as $m$ increases, the value converges to $t_1$ as shown in Table 2, but the number of Stieltjes constants terms grows very large. In Table 2, we see that the accuracy of $t_1$ for odd $m$ is slightly better than for even $m+1$. We recall that the Stieltjes constants $\gamma_n$ themselves are defined as the Laurent expansion coefficients of the Riemann zeta function about $s=1$ as

\begin{equation}\label{eq:20}
\zeta(s)=\frac{1}{s-1}+\sum_{n=0}^{\infty}(-1)^n\frac{\gamma_n(s-1)^n}{n!}
\end{equation}
where they are similarly expressed as

\begin{equation}\label{eq:20}
\gamma_n=\lim_{k\to\infty}\Bigg\{\sum_{l=1}^{k}\frac{\log^n(l)}{l}-\frac{\log^{n+1}(k)}{n+1}\Bigg\}.
\end{equation}
Also, the $\gamma_0=\gamma$ is the usual Euler-Mascheroni constant.

There is also another way to compute Stieltjes constants that we developed (which can be ultimately expressed in terms of primes). We observe that $\gamma_n$ are linear coefficients in the Laurent series (123), hence if we form a system of linear equations as

\begin{gather}
 \begin{pmatrix}1 & -\frac{(s_1-1)}{1!} & \frac{(s_1-1)^2}{2!} & -\frac{(s_1-1)^3}{3!} &\dots & \frac{(s_1-1)^k}{k!}\\ 1 & -\frac{(s_2-1)}{1!} & \frac{(s_2-1)^2}{2!} & -\frac{(s_2-1)^3}{3!} & \dots & \frac{(s_2-1)^k}{k!} \\ 1 & -\frac{(s_3-1)}{1!} & \frac{(s_3-1)^2}{2!} & -\frac{(s_3-1)^3}{3!} & \dots & \frac{(s_3-1)^k}{k!}\\ \vdots & \vdots & \vdots & \vdots & \ddots  & \vdots\\ 1 & -\frac{(s_k-1)}{1!} & \frac{(s_k-1)^2}{2!} & -\frac{(s_k-1)^3}{3!} &\dots & \frac{(s_k-1)^k}{k!}\end{pmatrix}
 \begin{pmatrix}
 \gamma_0 \\
 \gamma_1 \\
 \gamma_2 \\
 \vdots \\
 \gamma_k \\
   \end{pmatrix}=
  \begin{pmatrix}
  \zeta(s_1)-\frac{1}{s_1-1}\\
  \zeta(s_2)-\frac{1}{s_2-1}\\
  \zeta(s_3)-\frac{1}{s_3-1}\\
  \vdots \\
  \ \zeta(s_k)-\frac{1}{s_k-1}\\
   \end{pmatrix},
\end{gather}
then for a choice of values for $s_1=2$, $s_2=3$, $s_3=4$ and so on, and using the Cramer's rule (for solving a system of linear equations) and some properties of an Vandermonde matrix, we find that Stieltjes constants can be represented by determinant of a certain matrix:

\begin{equation}\label{eq:20}
\gamma_n = \pm\frac{\det(A_{n+1})}{\det(A_{})}
\end{equation}
where the matrix $A_n(k)$ is matrix $A(k)$, but with an $n$th column swapped with a vector $B$ as given next

 \begin{gather}
A(k)= \begin{pmatrix}1 & -\frac{1}{1!} & \frac{1^2}{2!} & -\frac{1^3}{3!} &\dots & \frac{(-1)^k}{k!}\\ 1 & -\frac{2}{1!} & \frac{2^2}{2!} & -\frac{2^3}{3!} & \dots & \frac{(-2)^k}{k!} \\ 1 & -\frac{3}{1!} & \frac{3^2}{2!} & -\frac{3^3}{3!} & \dots & \frac{(-3)^k}{k!}\\ \vdots & \vdots & \vdots & \vdots & \ddots  & \vdots\\ 1 & -\frac{(k+1)}{1!} & \frac{(k+1)^2}{2!} & -\frac{(k+1)^3}{3!} &\dots & \frac{(-1)^k(k+1)^k}{k!}\end{pmatrix}
\end{gather}
and

\begin{gather}
B(k)=
  \begin{pmatrix}
  \zeta(2)-1\\
  \zeta(3)-\frac{1}{2}\\
  \zeta(4)-\frac{1}{3}\\
  \vdots \\
  \ \zeta(k+1)-\frac{1}{k}\\
   \end{pmatrix}.
\end{gather}
The $\pm$ sign depends on the size of the matrix $k$, but in order to ensure a positive sign, the size of $k$ must be a multiple of $4$. It can be shown that $\det(A)\to 1$, hence the determinant formula for Stieltjes constants becomes

\begin{equation}\label{eq:20}
\gamma_n = \det(A_{n+1})
\end{equation}
and the size of the matrix must be $4k$.

Hence, the first few Stieltjes constants can be represented as:

\begin{align}           %start alignment between state and observation equation
%State Equation
%\begin{split}           %split the observation equation in two so it fits on the page
\gamma_0
&=                    %This & is to align the state and observation equation
\lim_{k\to\infty}\det\begin{pmatrix} \zeta(2)-1 & -\frac{1}{1!} & \frac{1^2}{2!} & -\frac{1^3}{3!} &\dots & \frac{(-1)^k1^k}{k!}\\ \zeta(3)-\frac{1}{2} & -\frac{2}{1!} & \frac{2^2}{2!} & -\frac{2^3}{3!} & \dots & \frac{(-1)^k2^k}{k!} \\  \zeta(4)-\frac{1}{3} & -\frac{3}{1!} & \frac{3^2}{2!} & -\frac{3^3}{3!} & \dots & \frac{(-1)^k3^k}{k!}\\ \vdots & \vdots & \vdots & \vdots & \ddots  & \vdots\\ \zeta(k+1)-\frac{1}{k} & -\frac{(k+1)}{1!} & \frac{(k+1)^2}{2!} & -\frac{(k+1)^3}{3!} &\dots & \frac{(-1)^k(k+1)^k}{k!}\end{pmatrix}
\notag \\ %split here
\\
&\quad = 0.57721566490153286061\ldots\nonumber,
\end{align}
and the next Stieltjes constant is

\begin{align}           %start alignment between state and observation equation
%State Equation
%\begin{split}           %split the observation equation in two so it fits on the page
\gamma_1
&=                    %This & is to align the state and observation equation
\lim_{k\to\infty}\det\begin{pmatrix} 1 & \zeta(2)-1 & \frac{1^2}{2!} & -\frac{1^3}{3!} &\dots & \frac{(-1)^k1^k}{k!}\\ 1 & \zeta(3)-\frac{1}{2} & \frac{2^2}{2!} & -\frac{2^3}{3!} & \dots & \frac{(-1)^k2^k}{k!} \\  1 & \zeta(4)-\frac{1}{3} & \frac{3^2}{2!} & -\frac{3^3}{3!} & \dots & \frac{(-1)^k3^k}{k!}\\ \vdots & \vdots & \vdots & \vdots & \ddots  & \vdots\\ 1 & \zeta(k+1)-\frac{1}{k}  & \frac{(k+1)^2}{2!} & -\frac{(k+1)^3}{3!} &\dots & \frac{(-1)^k(k+1)^k}{k!}\end{pmatrix}
\notag \\ %split here
\\
&\quad = -0.072815845483676724861\ldots\nonumber,
\end{align}
and the next is

\begin{align}           %start alignment between state and observation equation
%State Equation
%\begin{split}           %split the observation equation in two so it fits on the page
\gamma_2
&=                    %This & is to align the state and observation equation
\lim_{k\to\infty}\det\begin{pmatrix}1 & -\frac{1}{1!} &  \zeta(2)-1 & -\frac{1^3}{3!} &\dots & \frac{(-1)^k1^k}{k!}\\ 1 & -\frac{2}{1!} & \zeta(3)-\frac{1}{2} & -\frac{2^3}{3!} & \dots & \frac{(-1)^k2^k}{k!} \\  1 & -\frac{3}{1!} & \zeta(4)-\frac{1}{3}& -\frac{3^3}{3!} & \dots & \frac{(-1)^k3^k}{k!}\\ \vdots & \vdots & \vdots & \vdots & \ddots  & \vdots\\ 1 & -\frac{(k+1)}{1!} & \zeta(k+1)-\frac{1}{k} & -\frac{(k+1)^3}{3!} &\dots & \frac{(-1)^k(k+1)^k}{k!}\end{pmatrix}
\notag \\ %split here
\\
&\quad = -0.0096903631928723184845\ldots\nonumber,
\end{align}
and so on. And in [12] we performed extensive numerical computation of (129) where it clearly converges to the Stieltjes constants, and in essence, analytically extends $\zeta(s)$ to the whole complex plane by the Laurent expansion (123) with an only knowledge of $\zeta(s)$ for $s>1$. Finally, we remark that the vector $B(k)$ can be expressed in terms of primes by substituting the Euler product for $\zeta(s)$ as

\begin{gather}
B(k)=
  \begin{pmatrix}
   \prod_{n=1}^{\infty}\left(1-\frac{1}{p_n^2}\right)^{-1}-1\\
   \prod_{n=1}^{\infty}\left(1-\frac{1}{p_n^3}\right)^{-1}-\frac{1}{2}\\
   \prod_{n=1}^{\infty}\left(1-\frac{1}{p_n^4}\right)^{-1}-\frac{1}{3}\\
  \vdots \\
  \  \prod_{n=1}^{\infty}\left(1-\frac{1}{p_n^{k+1}}\right)^{-1}-\frac{1}{k}\\
   \end{pmatrix}.
\end{gather}
This in turn leads to computing Stieltjes constants by primes, then $Z_{|nt|}$ by the Stieltjes constants, and then the non-trivial zeros by $Z_{|nt|}$. Henceforth, we also obtain a similar formula for the $\eta_n$ constants by defining a similar vector

\begin{gather}
D(k)=
  \begin{pmatrix}
  -\frac{\zeta^{\prime}(2)}{\zeta(2)}-1\\
  -\frac{\zeta^{\prime}(3)}{\zeta(3)}-\frac{1}{2}\\
   -\frac{\zeta^{\prime}(4)}{\zeta(4)}-\frac{1}{3}\\
  \vdots \\
  \  -\frac{\zeta^{\prime}(k+1)}{\zeta(k+1)}-\frac{1}{k} \\
   \end{pmatrix}
\end{gather}
and matrix $C_n$ which is matrix $A$ but with an $n$th column swapped with a vector $D$, and using the Cramers rule, we find that

\begin{equation}\label{eq:20}
\eta_n = \frac{1}{n!}\det(C_{n+1})
\end{equation}
and the size of the matrix must be $4k$ to ensure a positive sign.
For example, $\eta_2$ would be

\begin{align}           %start alignment between state and observation equation
%State Equation
%\begin{split}           %split the observation equation in two so it fits on the page
\eta_2
&=                    %This & is to align the state and observation equation
\lim_{k\to\infty}\frac{1}{2!}\det\begin{pmatrix}1 & -\frac{1}{1!} &  -\frac{\zeta^{\prime}(2)}{\zeta(2)}-1 & -\frac{1^3}{3!} &\dots & \frac{(-1)^k1^k}{k!}\\ 1 & -\frac{2}{1!} & -\frac{\zeta^{\prime}(3)}{\zeta(3)}-\frac{1}{2} & -\frac{2^3}{3!} & \dots & \frac{(-1)^k2^k}{k!} \\  1 & -\frac{3}{1!} & -\frac{\zeta^{\prime}(4)}{\zeta(4)}-\frac{1}{3}& -\frac{3^3}{3!} & \dots & \frac{(-1)^k3^k}{k!}\\ \vdots & \vdots & \vdots & \vdots & \ddots  & \vdots\\ 1 & -\frac{(k+1)}{1!} & -\frac{\zeta^{\prime}(k+1)}{\zeta{(k+1)}}-\frac{1}{k} & -\frac{(k+1)^3}{3!} &\dots & \frac{(-1)^k(k+1)^k}{k!}\end{pmatrix}
\notag \\ %split here
\\
&\quad = -0.051688632033192893802\ldots\nonumber.
\end{align}

We remarked earlier that the presented results so far are geared toward computing $Z_{|nt|}$ from $Z_{nt}$ by the asymptotic formula (111). We now investigate two other similar formulas of obtaining $Z_{|nt|}$ given by works of Voros [20]. The first is the $\mathit{Bologna}$ formula (family) as
\begin{equation}\label{eq:20}
Z_{|nt|}(m)=\sum_{n=1}^{m}\binom{2m-n-1}{m-1}Z_{nt}(n)
\end{equation}
for $m>1$ (and $t=\frac{1}{2}$) given in [20, p. 84]. And the second (very similar formula) relating to the $Z_{|nt|}$ to the Keiper-Li constants $\lambda_m$ as

\begin{equation}\label{eq:20}
Z_{|nt|}(m)=\sum_{n=1}^{m}(-1)^{n+1}\binom{2m}{m-n}\lambda_n
\end{equation}
where $\lambda_m$ are defined by

\begin{equation}\label{eq:20}
\lambda_m=\sum_{\rho_{nt}}\left[1-\left(1-\frac{1}{\rho_{nt}}\right)^m\right]
\end{equation}
which has a closed-form representation as

\begin{equation}\label{eq:20}
\lambda_m=\frac{1}{(m-1)!}\frac{d^m}{ds^m}\left[s^{m-1}\log \xi(s)\right]_{s\to 1}
\end{equation}
in terms of logarithmic differentiation of the Riemann xi function, which essentially resembles all our previous results. The first few constants are:
\begin{equation}\label{eq:9}
\begin{aligned}
    \lambda_1 &=  0.02309570896612103381\ldots,\\
    &\\
    \lambda_2 &=  0.09234573522804667038\ldots,\\
    &\\
    \lambda_3 &=  0.20763892055432480379\ldots,\\
    &\\
    \lambda_4 &=  0.36879047949224163859\ldots,\\
    &\\
    \lambda_5 &=  0.57554271446117745243\ldots,\\
\end{aligned}
\end{equation}
and so on. The Li's Criterion for (RH) is if $\lambda_m>0$ for all $m\geq 1$, which has been widely studied.  Henceforth, the first few special values of $Z_{|nt|}(m)$ in terms of Keiper-Li constants are:

\begin{equation}\label{eq:9}
\begin{aligned}
    Z_{|nt|}(1) &= \lambda_1 \\
    &= 0.023095708966121033814310247906\ldots, \\
    &\\
    Z_{|nt|}(2) &= 4\lambda_1-\lambda_2\\
    &= 0.000037100636437464871512505433\dots,\\
    &\\
    Z_{|nt|}(3) &=  15\lambda_1-6\lambda_2+\lambda_3\\
    &= 0.000000143677860288691774848062\dots, \\
    &\\
    Z_{|nt|}(4) &= 56\lambda_1-28\lambda_2+8\lambda_3-\lambda_4\\
    &= 0.000000000659827914542401152690\dots,\\
    &\\
    Z_{|nt|}(5) &= 210\lambda_1-120\lambda_2+45\lambda_3-10\lambda_4+\lambda_5\\
    &= 0.000000000003193891860867324232\dots.
\end{aligned}
\end{equation}
Now, when using the previous results, we can also compute non-trivial zeros in terms of the Keiper-Li constants. For example, for $m=10$ we approximate $t_1$ as

\begin{equation}\label{eq:20}
\begin{aligned}
t_{1}  \approx \Bigg[\Bigg(167960&\lambda_1-125970\lambda_2+77520\lambda_3-38760\lambda_4+15504\lambda_5-4845\lambda_6+\\
                   &+1140\lambda_7-190\lambda_8+20\lambda_9-\lambda_{10}\Bigg)^{-\frac{1}{10}}-\frac{1}{4}\Bigg]^{1/2}\\
\end{aligned}
\end{equation}

\begin{equation}\label{eq:20}
t_{1}  \approx 14.07711485942798027551\ldots \notag
\end{equation}
by substituting (138) to (111). A numerical computation clearly converges to the correct value as $m\to \infty$.
These formulas, together with our previous results, can be used to compute non-trivial zeros and generate a wide variety of representations of non-trivial zeros.

Moving on, another way to obtain the non-trivial zeros is to consider the secondary zeta function

\begin{equation}\label{eq:20}
Z_{1}(s)=\sum_{n=1}^{\infty}\frac{1}{t_n^{s}}
\end{equation}
over just the imaginary part of non-trivial zeros $t_n$ and applying Theorem 2 directly, where it now suffices to find a closed-form representation of $Z_{1}(s)$. To do this, we consider the Riemann xi function again, but this time transform the variable $s=\frac{1}{2}+it$ along the critical line yielding a function $\Xi(t)=\xi(\frac{1}{2}+it)$, so that its zeros are only the imaginary parts of non-trivial zeros $t_n$. Now, when applying the $m$th log-derivative formula we get

\begin{equation}\label{eq:20}
\begin{aligned}
Z_1(2m) &=-\frac{1}{2(2m-1)!}\frac{d^{(2m)}}{dt^{(2m)}}\log \Xi(t)\Bigr\rvert_{t\to 0}\\
           \\
          &= \sum_{n=1}^{\infty}\frac{1}{t_n^{2m}}
\end{aligned}
\end{equation}
for $m\geq 1$, which yields the generalized zeta series over imaginary parts of non-trivial zeros $t_n$. We note that since $\Xi(t)$ is even, we only consider the $2m$ limiting value and require a factor of $\frac{1}{2}$. The first few special values of this series are:

\begin{equation}\label{eq:9}
\begin{aligned}
Z_{1}(1) &\sim \sum_{0<t\leq T}\frac{1}{t}\sim H+\frac{1}{4\pi}\log^2\left(\frac{T}{2\pi}\right)\quad (T\to \infty) \\
         & H = -0.0171594043070981495\ldots, \\
\\
Z_{1}(2) &=\frac{1}{2}(\log |\zeta|)^{(2)}\big(\frac{1}{2}\big)+\frac{1}{8}\pi^2+\beta(2)-4 \\
     &= 0.023104993115418970788933810430\dots, \\
     &\\
Z_{1}(3) &= 0.000729548272709704215875518569\dots, \\
     &\\
Z_{1}(4) &=-\frac{1}{12}(\log |\zeta|)^{(4)}\big(\frac{1}{2}\big)-\frac{1}{24}\pi^4-4\beta(4)+16 \\
     &= 0.000037172599285269686164866262\dots, \\
     &\\
Z_{1}(5) &= 0.000002231188699502103328640628\dots.
\end{aligned}
\end{equation}
For $s=1$, the series diverges asymptotically as $H+\frac{1}{4\pi}\log^2(\frac{T}{2\pi})$ where $H$ is a constant as shown above, which is investigated by Hassani [10] and R.P. Brent [4][5], but its precise computation is very challenging because of a very slow convergence of the series, and the presented value was accurately computed to high precision by R.P. Brent [5, p.6] using $10^{10}$ non-trivial zeros and remainder estimation techniques, which further improve accuracy to over 19 decimal places. We also remark that the number of non-trivial zeros are to be taken less than or equal to $T$. The Hassani constant that results is analogous to the harmonic sum and Euler's constant relation

\begin{equation}\label{eq:20}
\sum_{n=1}^{k}\frac{1}{n}\sim \gamma+\log(k) \quad (k\to\infty).
\end{equation}
The even values of (146) given were computed using the Voros's closed-form formula

\begin{equation}\label{eq:20}
\begin{aligned}
Z_{1}(2m) = (-1)^m \bigg[-\frac{1}{2(2m-1)!}(\log |\zeta|)^{(2m)}\big(\frac{1}{2}\big)+\\
         -\frac{1}{4}\left[(2^{2m}-1)\zeta(2m)+2^{2m}\beta(2m)\right]+2^{2m}\bigg]
\end{aligned}
\end{equation}
assuming (RH). There is no known formula such as this valid for a positive odd integer argument, the odd values given were computed by an algorithm developed by Arias De Reyna [3] in a Python software package in a library \textbf{mpmath}, which roughly works by computing (144) up to several zeros and estimating the remainder to a high degree of accuracy. It would otherwise take billions of non-trivial zeros to compute (144) directly. Also, the function

\begin{equation}\label{eq:20}
\beta(s)= \sum_{n=0}^{\infty}\frac{(-1)^n}{(2n+1)^s}
\end{equation}
is the Dirichlet beta function. Finally, when applying the root-extraction to (144) by Theorem 2,  we find the principal zero as

\begin{equation}\label{eq:20}
t_{1} = \lim_{m\to\infty}\left[Z_{1}(2m)\right]^{-\frac{1}{2m}},
\end{equation}
and a numerical computation for $m=250$ yields

\begin{equation}\label{eq:20}
\begin{aligned}
t_1=14.13472514173469379045725198356247027078425711569924 & \\
     317568556746014996342980925676494901\underline{0}212214333747\ldots.
\end{aligned}
\end{equation}
using a script in Listing 4, which is accurate to $87$ decimal places. The second zero is recursively found as
\begin{equation}\label{eq:20}
t_{2} = \lim_{m\to\infty}\left[Z_{1}(2m)-\frac{1}{t_1^{2m}}\right]^{-\frac{1}{2m}}
\end{equation}
and a numerical computation for $m=250$ yields
\begin{equation}\label{eq:20}
t_2=21.0220396387715549926284795938969027773\underline{3}355195796311\ldots
\end{equation}
which is accurate to $38$ decimal places, but the first zero $t_1$ used was already pre-computed to $1000$ decimal places by other means in order to ensure convergence. We cannot substitute the same $t_1$ computed in (151) for $m=250$ to (152) as it will cause self-cancelation, and so, the accuracy of $t_{n}$ must be much higher than $t_{n+1}$. And, similarly, the third zero is recursively found as
\begin{equation}\label{eq:20}
t_{3} = \lim_{m\to\infty}\left[Z_{1}(2m)-\frac{1}{t_1^{2m}}-\frac{1}{t_2^{2m}}\right]^{-\frac{1}{2m}}
\end{equation}
and a numerical computation for $m=250$ yields
\begin{equation}\label{eq:20}
t_3=25.010857580145688763213790992562821818659549\underline{6}5846378\ldots
\end{equation}
which is accurate to $43$ decimal places, but the $t_1$ and $t_2$ zeros used were already pre-computed to $1000$ decimal places by other means in order to ensure convergence. We cannot substitute the same $t_1$ and $t_2$ computed in (151) and (153) for $m=250$ to (154) as it will cause self-cancelation, and so the accuracy of $t_{n}$ must be much higher than $t_{n+1}$. Hence, a full recurrence formula is

\begin{equation}\label{eq:20}
t_{n+1} = \lim_{m\to\infty}\left[Z_{1}(2m)-\sum_{k=1}^{n}\frac{1}{t_{k}^{2m}}\right]^{-\frac{1}{2m}}.
\end{equation}

\lstset{language=C,deletekeywords={for,double,return},caption={PARI script for computing first the non-trivial zero using equation (145) and (150).},label=DescriptiveLabel,captionpos=b}
\begin{lstlisting}[frame=single]
\\ define xi(s)
xi(s)=
{
  (s-1)*gamma(1+s/2)/Pi^(s/2)*zeta(s)
}

{
   \\ set limit variable
   m = 20;

   \\ compute generalized zeta series
   A = -derivnum(t=0,log(xi(1/2+I*t)),2*m);
   B = 1/factorial(2*m-1);

   Z = 1/2*A*B;

   \\ compute the first zero
   t1 = Z^(-1/(2*m));
   print(t1);
}
\end{lstlisting}

Furthermore, we also have a useful identity

\begin{equation}\label{eq:20}
\frac{1}{2^s}\zeta\big(s,\frac{5}{4}\big)=\sum_{k=1}^{\infty}\frac{1}{\left(\frac{1}{2}+2k\right)^s}=2^s\left[\frac{1}{2}\left((1-2^{-s})\zeta(s)+\beta(s)\right)-1\right],
\end{equation}
found in [19, p.681] for which we can express the zeta and beta terms in terms of a Hurwitz zeta function, and then substituting the Voros's closed-form formula (148) into (157) we obtain another formula for non-trivial zeros

\begin{equation}\label{eq:20}
t_{n+1} = \lim_{m\to\infty}\left[\frac{(-1)^{m}}{2}\left(2^{2m}-\frac{1}{(2m-1)!}\log (|\zeta|)^{(2m)}\big(\frac{1}{2}\big)-\frac{1}{2^{2m}}\zeta(2m,\frac{5}{4})\right)-\sum_{k=1}^{n}\frac{1}{t_{k}^{2m}}\right]^{-\frac{1}{2m}}
\end{equation}
as anticipated in (11). Also, an extensive numerical computation of (158) to high precision is summarized in [13], and also for higher order non-trivial zeros.

One limitation for all of these formulas for non-trivial zeros is when $n\to\infty$, then the average gap between zeros gets smaller as $t_{n+1}-t_{n}\sim\frac{2\pi}{\log(n)}$, making the use of these formulas progressively harder and harder to compute the next zero recursively.

By putting these results together, we have two main generalized zeta series, the first series is for the complex magnitude (109) over all zeros (including the hypothetical zeros off of the critical line) as

\begin{equation}\label{eq:20}
Z_{|nt|}(s)= \frac{1}{(\sigma_1^2+t_1^2)^{s}}+\frac{1}{(\sigma_2^2+t_2^2)^{s}}+\frac{1}{(\sigma_3^2+t_3^2)^{s}}+\ldots,
\end{equation}
from which we have an asymptotic relationship
\begin{equation}\label{eq:20}
[Z_{|nt|}(s)]^{-\frac{1}{s}} \sim \sigma_1^2+t_1^2 \quad (s\to\infty).
\end{equation}
The second formula is for generalized zeta series over the imaginary parts

\begin{equation}\label{eq:20}
Z_{1}(2s) = \frac{1}{t_1^{2s}}+\frac{1}{t_2^{2s}}+\frac{1}{t_3^{2s}}+\ldots
\end{equation}
from which we have asymptotic relationship
\begin{equation}\label{eq:20}
[Z_{1}(2s)]^{-\frac{1}{s}} \sim t_1^2.
\end{equation}
Combining (160) and (162) we obtain a true asymptotic formula for the real part of the first non-trivial zero
\begin{equation}\label{eq:20}
\Re(\rho_{1,nt})=\sigma_1=\lim_{s\to\infty}\sqrt{[Z_{|nt|}(s)]^{-\frac{1}{s}}-[Z_{1}(2s)]^{-\frac{1}{s}}}
\end{equation}
and further, by substituting (104) for $Z_{|nt|}(s)$ we further obtain

\begin{equation}\label{eq:20}
\Re(\rho_{1,nt})=\sigma_1=\lim_{s\to\infty}\bigg[\Big(\frac{1}{2}Z_{nt}^2(s)-\frac{1}{2}Z_{nt}(2s)\Big)^{-\frac{1}{s}}-Z_{1}(2s)^{-\frac{1}{s}}\bigg]^{\frac{1}{2}}= \frac{1}{2}.
\end{equation}

Earlier we remarked that the Voros's closed-form formula for $Z_1({2s})$ depends on (RH), and the formula for $Z_{|nt|}(s)$ in terms of the eta constants does not, hence, if the limit converges to $\frac{1}{2}$, it would imply (RH). The convergence is achieved by a cancelation of $t_1$ generated by both equations (160) and (162). In Table 3, we compute $\Re(\rho_{1,nt})$ by equation (164) for various values of the limit variable $m$ from low to high, and observe convergence to $\frac{1}{2}$ as $m$ increases. Of course, a numerical computation in this case cannot be used as a definite  proof of (RH), but interestingly, this formula for the real part is indeed converging to $\frac{1}{2}$, where it is a known zero.

\begin{table}[hbt!]
\caption{The computation of the real part of the first non-trivial zero $\Re(\rho_{1,nt})$ by equation (164) (first 30 decimal places).}
\centering
\begin{tabular}{c c c}
\hline\hline
$m$  & $\Re(\rho_{1,nt})$ & Significant Digits\\ [0.5ex]
\hline
$15$ & 0.\underline{4}73092533136919315298424867840 & 1 \\
$20$ & 0.\underline{4}89872906754757867871088167822 & 1  \\
$25$ & 0.49\underline{9}306593693622997849224832930 & 3  \\
$50$ & 0.5000000\underline{0}2854988386875132586206 & 8  \\
$100$ &0.499999999999999\underline{9}68130042946283 & 16  \\
$150$ &0.500000000000000000000000\underline{0}39540 & 25  \\
$200$ &0.499999999999999999999999999999 & 35  \\
\hline
\end{tabular}
\label{table:nonlin}
\end{table}

\section{The inverse Riemann zeta function}
In the previous section, we outlined the full solution set to

\begin{equation}\label{eq:20}
w=\zeta(s)=0
\end{equation}
(assuming RH), which can also be interpreted as an inverse of

\begin{equation}\label{eq:20}
s=\zeta^{-1}(0)
\end{equation}
as a set of all points $s$ such that $w=\zeta(s)=0$. Now, for other values of $w$-domain of the Riemann zeta function, we seek to find $s$ such that

\begin{equation}\label{eq:20}
s=\zeta^{-1}(w)
\end{equation}
which implies that

\begin{equation}\label{eq:20}
\zeta^{-1}(\zeta(s)) = s
\end{equation}
and
\begin{equation}\label{eq:20}
\zeta(\zeta^{-1}(w))=w
\end{equation}
for some domains $w$ and $s$. Again, the zeta function can have many solutions $s_n$ for which $w=\zeta(s_n)$ (just like for the zeros), and so the inverse zeta is a multi-valued function. Hence, we need to solve an equation
\begin{equation}\label{eq:20}
\zeta(s)-w=0
\end{equation}
as a function of variable $w$. Then, by employing the $m$th log-derivative method and the recursive root extraction described earlier, we can arrive at a solution to (170). To illustrate this, we re-consider the recurrence formula for trivial zeros (85) again as

\begin{equation}\label{eq:20}
\begin{aligned}
\rho_{t,n+1}= \lim_{m\to\infty}\pm \Bigg[-&\frac{1}{(2m-1)!}\frac{d^{(2m)}}{ds^{(2m)}}\log\Big[\zeta(s)(s-1)\Big]\Bigr\rvert_{s\to 0}-\sum_{k=1}^{n}\frac{1}{\rho_{t,k}^{2m}}+\\ &-\sum_{k=1}^{\infty}\Bigg(\frac{1}{\rho_{nt,k}^{2m}}+\frac{1}{\bar{\rho}_{nt,k}^{2m}}\Bigg)\Bigg]^{-\frac{1}{2m}}
\end{aligned}
\end{equation}
(since $Z_{t}$ is dominating the series), and comparing (170) with (171), we solve this equation by replacing trivial zeros with $s_n$ as

\begin{equation}\label{eq:20}
\begin{aligned}
s_{n+1}=\zeta^{-1}(w)= \lim_{m\to\infty}\pm \Bigg[-&\frac{1}{(2m-1)!}\frac{d^{(2m)}}{ds^{(2m)}}\log\Big[(\zeta(s)-w)(s-1)\Big]\Bigr\rvert_{s\to 0}+ \\
&-\sum_{k=1}^{n}\frac{1}{s_k^{2m}}-\sum_{k=1}^{\infty}\Bigg(\frac{1}{\rho_{nt,k}^{2m}}+\frac{1}{\bar{\rho}_{nt,k}^{2m}}\Bigg)\Bigg]^{-\frac{1}{2m}}
\end{aligned}
\end{equation}
where $s_n$ is the multi-valued solution of $s$-domain, as indexed by variable $n$, and which is extracted from the recurrence relation of (172), where $s=s_1$ is the principal solution. But the non-trivial zero terms were only due to case when $w=0$, and so we drop the non-trivial zero terms and obtain the form:
\begin{equation}\label{eq:20}
s_{n+1}=\zeta^{-1}(w)=\lim_{m\to\infty}\pm \left[-\frac{1}{(2m-1)!}\frac{d^{(2m)}}{ds^{(2m)}}\log\left[(\zeta(s)-w)(s-1)\right]\Bigr\rvert_{s\to 0}-\sum_{k=1}^{n}\frac{1}{s_k^{2m}}\right]^{-\frac{1}{2m}}
\end{equation}
and the principal solution is

\begin{equation}\label{eq:20}
s=s_{1}=\zeta^{-1}(w)=\lim_{m\to\infty}\pm \left[-\frac{1}{(m-1)!}\frac{d^{(m)}}{ds^{(m)}}\log\left[(\zeta(s)-w)(s-1)\right]\Bigr\rvert_{s\to 0}\right]^{-\frac{1}{m}},
\end{equation}
where here, we consider an even and odd $m$, and remove the $2m$ for convenience. We next seek to verify this formula by performing a high precision numerical computation of (174) in PARI/GP software package for various test cases. The script that we run is a slight modification of Listing 2, as shown in Listing 5.

\newpage
\lstset{language=C,deletekeywords={for,double,return},caption={PARI script for computing the inverse zeta by equation (174).},label=DescriptiveLabel,captionpos=b}
\begin{lstlisting}[frame=single]
{
   \\ set limit variable
   m = 40;

   \\ set a value for w-domain
   w = zeta(2);

   \\ compute generalized zeta series
   A = -derivnum(s=0,log((zeta(s)-w)*(s-1)), m);
   B = 1/factorial(m-1);
   Z = A*B;

   \\ compute s-domain
   s = Z^(-1/m);
   print(s);
}
\end{lstlisting}

In the first example, we attempt is to invert the Basel problem

\begin{equation}\label{eq:20}
w=\zeta(2)=\frac{\pi^2}{6}=1.64493406684822643647\ldots
\end{equation}
by computing (174) for $m=40$, we obtain

\begin{equation}\label{eq:20}
s=\zeta^{-1}(\frac{\pi^2}{6})=2.000000000000000000000000000\underline{0}534151435532\ldots
\end{equation}
which is accurate to 28 digits after the decimal place. And as $m$ increases, the result clearly converges to $2$. In the next example, we invert the Ap\'ery's  constant

\begin{equation}\label{eq:20}
w=\zeta(3)=1.20205690315959428539\ldots
\end{equation}
then for $m=40$ we compute

\begin{equation}\label{eq:20}
s=\zeta^{-1}(\zeta(3))=3.0000000000000000000\underline{0}22140790061640438069\ldots
\end{equation}
accurate to  20 decimal places, where it is seen converging to $3$ (even for lower values of limit variable $m$, the convergence is fast). In Table 4 we summarize computations for various other values of $w$-domain, where we can see the correct convergence to the inverse Riemann zeta function for $m=20$ every time. For $w=\zeta(0)=-\frac{1}{2}$ there is a singularity at higher derivatives, so we take $\lim_{w\to -\frac{1}{2}}\zeta^{-1}(w)$, and for $\Re(w)\in (-0.5,j_1)\cup \{\Im(w)=0\}$ where $j_1=0.00915989\ldots$ is a constant, there is also a sign change from positive to negative due to this branch, so that the output will come out negative as shown by numerical computations in Table 4. In general, we find that for $\Re(w)\in (-\infty,-0.5)\cup (1,\infty)$ we consider the positive solution

\begin{equation}\label{eq:20}
s=s_{1}=\zeta^{-1}(w)=\lim_{m\to\infty} +\left[-\frac{1}{(m-1)!}\frac{d^{m}}{ds^{m}}\log\left[(\zeta(s)-w)(s-1)\right]\Bigr\rvert_{s\to 0}\right]^{-\frac{1}{m}}
\end{equation}
and otherwise for $\Re(w)\in (-0.5,j_1)\cup \{\Im(w)=0\}$ we consider the negative solution

\begin{equation}\label{eq:20}
s=s_{1}=\zeta^{-1}(w)=\lim_{m\to\infty} -\left[-\frac{1}{(m-1)!}\frac{d^{m}}{ds^{m}}\log\left[(\zeta(s)-w)(s-1)\right]\Bigr\rvert_{s\to 0}\right]^{-\frac{1}{m}}.
\end{equation}
We observe that convergence is faster near $w=-0.5$ for both sides, and as $w\to-0.5$ we get convergence to $0$ as desired. Furthermore, we observe that near both sides of the pole at $s=1$ we can recover the inverse zeta. And hence, when we compute for higher limit variable $m$, the values are clearly converging to the inverse of the Riemann zeta function. In Table 5, we also compute the inverse zeta for various arbitrary values of $w$-domain for $m=100$.

\begin{table}[hbt!]
\caption{The computation of inverse zeta $s=s_1=\zeta^{-1}(w)$ for $m=20$ by equation (174) for different values of $w$. For $w\in (-\infty,-0.5) \cup (1,\infty)$ we consider positive solutions, otherwise for $w\in(-0.5,j_1)$ we consider negative solutions.}
\centering
\begin{tabular}{c c c c}
\hline\hline
$s$ & $w=\zeta(s)$ & $s=\zeta^{-1}(w)$ (First 15 Digits)  & Significant Digits\\ [0.5ex]
\hline % inserts single horizontal line
 $-5$ &   -0.003968253968253 & -1.8847413\underline{7}7602060 & 8 \\
 $-4$ & 0 &                    -1.999999\underline{9}04603844 & 7 \\
 $-3$ &    0.008333333333333 &   -2.4701\underline{6}8918790366 & 5 \\
 $-2$ & 0 &                    -1.999999\underline{9}04603844 & 7 \\
 $-1.5$ & -0.025485201889833 & -1.4999999999\underline{9}8134 & 11 \\
 $-1$ &   -0.083333333333333 &    -1.000000000000000 & 16 \\
 $-0.5$ &    -0.207886224977354 & -0.499999999999999 & 23 \\
 $-0.125$ &  -0.399069668945045 & -0.125000000000000 & 36 \\
 $-0.001$ &  -0.499082063645236 &  0.000999999999999 & 42 \\
 $0.001$ &   -0.500919942713218 &  0.000999999999999 & 42\\
 $0.125$ &   -0.632775623498695 & 0.125000000000000 & 36 \\
 $0.5$   &   -1.460354508809586 &      0.500000000000000 & 26 \\
 $0.75$  &     -3.441285386945222   &  0.749999999999999 & 22 \\
 $0.9999$& -9999.422791616731466 &     0.999900000000000 & 27 \\
 $1.0001$& 10000.577222946437629 &     1.000099999999999 & 26 \\
 $1.5$ & 2.612375348685488 &           1.500000000000000 & 18 \\
 $2$ &   1.644934066848226&            1.9999999999999\underline{9}7 & 14 \\
 $2.5$ & 1.341487257250917&            2.50000000000\underline{0}706 & 12 \\
 $3$ &   1.202056903159594&            3.000000000\underline{0}32817 & 10 \\
 $4$ &   1.082323233711138&            4.0000000\underline{0}8467328 & 8 \\
 $5$ &   1.036927755143369&            5.0000\underline{0}1846688341 & 5 \\
\hline
\end{tabular}
\label{table:nonlin}
\end{table}

\begin{table}[hbt!]
\caption{The computation of inverse zeta $s=s_1=\zeta^{-1}(w)$ for $m=100$ by equation (174) for different values of $w$. For $w\in (-\infty,-0.5) \cup (1,\infty)$ we consider positive solutions, otherwise for $w\in(-0.5,j_1)$ we consider negative solutions. The red color indicates the singularity region where convergence is erroneous.}
\centering
\begin{tabular}{c c c}
\hline\hline
$w$  & $s=\zeta^{-1}(w)$   & $w=\zeta(\zeta^{-1}(w))$ \\ [0.5ex]
\hline % inserts single horizontal line
-10 & 0.90539516131918826348 & -10.00000000000000000000 \\
-5  & 0.82027235216804898973 & -5.000000000000000000000 \\
-4  & 0.78075088259313749868 & -4.000000000000000000000 \\
-3  & 0.71881409407526189655 & -3.000000000000000000000 \\
-2  & 0.60752203756637705289 & -2.000000000000000000000 \\
-1  & 0.34537265729115398953 & -1.000000000000000000000 \\
-0.5001  & 0.00010880828067160644 & -0.50009999999999999999\\
-0.4999  & -0.00010883413591990730 & -0.49989999999999999999\\
-0.1  & -0.90622982899228246768 & -0.100000000000000000000 \\
0   & -1.99999999999999999999 &  0 \\
0.001 & -2.03407870819025354208 & 0.00099999999999999999 \\
0.0015 & -2.05213532171740716650 &0.00149999999999999999 \\
0.0091598 & -2.69835815770380622679 & 0.0091\underline{5}551952718300130 \\
\color{red}{0.01} & 2.69182425874263410494 &  1.27522086147958091320 \\
\color{red}{0.02} & 2.68341537834567817177 &  1.27769681556903809338 \\
\color{red}{0.1}  & 2.62327826166715651687 &  1.29626791092230654966 \\
\color{red}{0.5}  & 3.28523402279617101762 &  1.15403181697782434872 \\
\color{red}{0.8}  & 4.35892653933022255726 &  1.06086646037035161615 \\
\color{red}{0.999} & 9.19090684760189275051 & 1.00175563731403047546 \\
1.001 & 9.19454270908484711549 & 1.00\underline{1}75114882142955996 \\
1.01 &  6.75096988949758004724 & 1.0100000000000000\underline{1}556 \\
1.1 &   3.77062121683766280843 & 1.10000000000000000000 \\
2 &     1.72864723899818361813 & 2.00000000000000000000 \\
3 &     1.41784593578735729296 & 3.00000000000000000000 \\
4 &     1.29396150555724361741 & 4.00000000000000000000 \\
5 &     1.22693680841631476071 & 5.00000000000000000000 \\
10 &    1.10621229947483799036 & 10.0000000000000000000 \\
\hline
\end{tabular}
\label{table:nonlin}
\end{table}

In Figure 3, we plot $s_1=\zeta^{-1}(w)$ for the principal solution by equation (174). The function reproduces the inverse zeta correctly everywhere except in a region $\Re(w)\in(j_1,1)$ where the convergence is erroneous due to (possibly) an infinite number of branch non-isolated singularities that form in an interval $\Re(w)\in(j_1,1)$.  That is not to say that $\zeta(s)$ doesn't have an inverse in this strip, for example, we have $\zeta(-15.48765247\dots)=0.5$ so that $\zeta^{-1}(0.5)=-15.48765247\dots$, but it doesn't exist on the principal branch $s_1$.

\begin{figure}[h]
  \centering
  % Requires \usepackage{graphicx}
  \includegraphics[width=170mm]{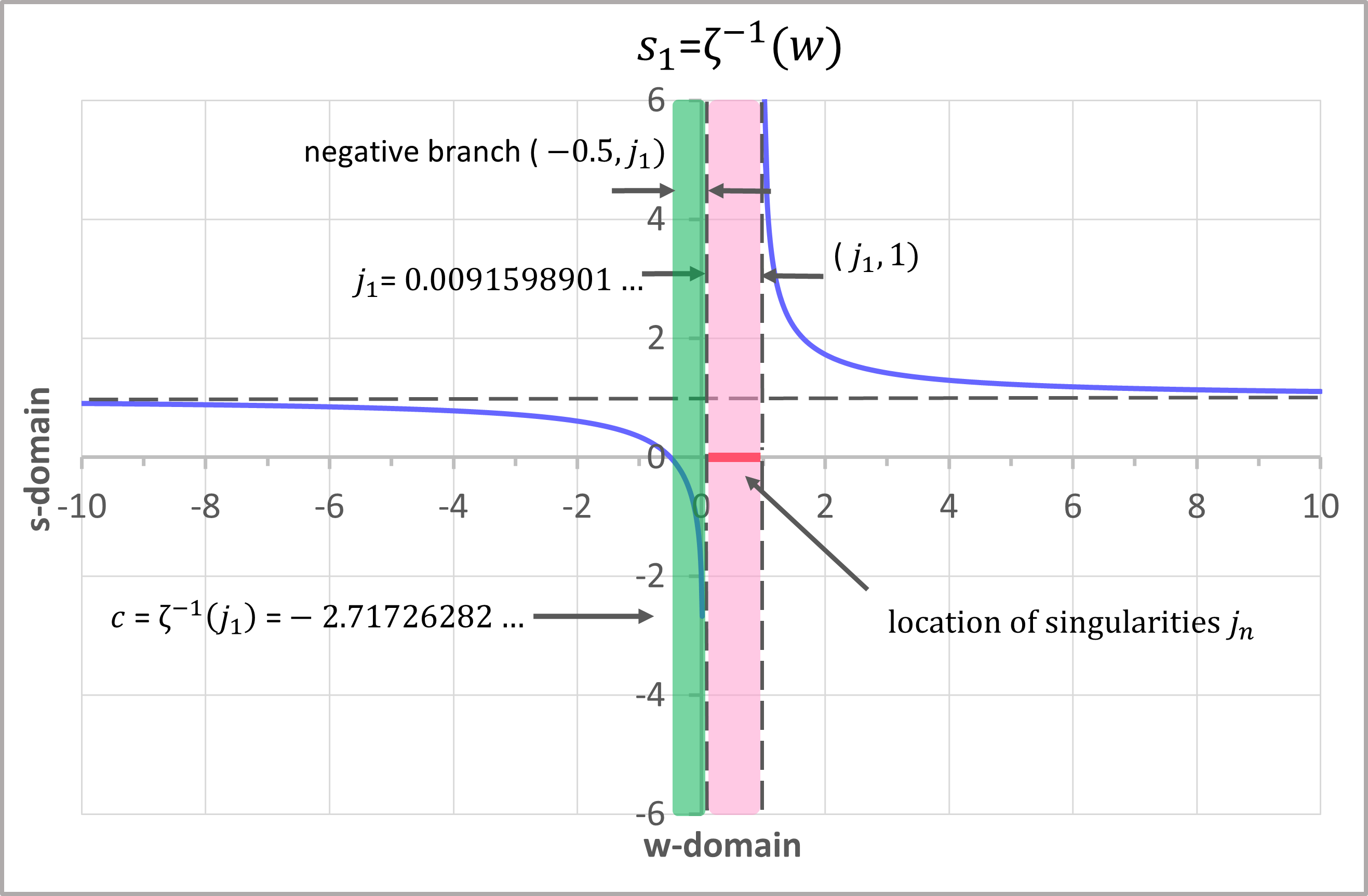}\\
  \caption{A plot of $s_1=\zeta^{-1}(w)$ for $w\in (-10,10)$ by equation (174) showing location of zeros and singularities.}\label{1}
\end{figure}

\begin{figure}[h]
  \centering
  % Requires \usepackage{graphicx}
  \includegraphics[width=170mm]{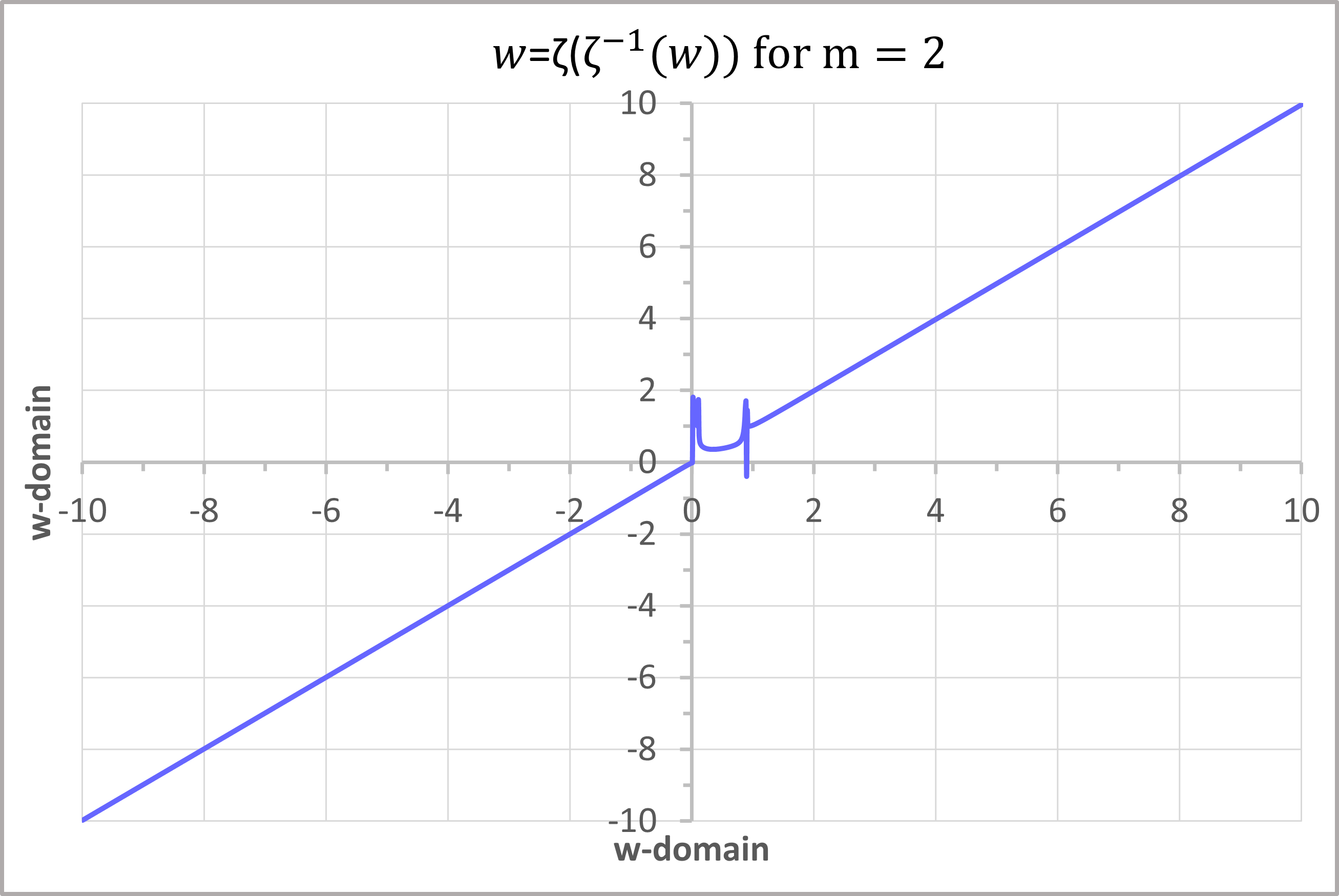}\\
  \caption{A plot of $w=\zeta(\zeta^{-1}(w))$ for real $w\in (-10,10)$ for $m=2$ by the 2nd order approximation equation (185). }\label{1}
\end{figure}

In the next example, we seek to compute the next branch recursively. Let us first compute an inverse of $\zeta(-3)=\frac{1}{120}$ again for $m=40$ and obtain

\begin{equation}\label{eq:20}
s_1=\zeta^{-1}(\frac{1}{120})=-2.4727\underline{0}347315140943243\ldots
\end{equation}
where here, we consider the negative solution. At first, one might wonder that the result is incorrect, but in fact, it is only the principal solution.  The second solution for $\zeta^{-1}(\frac{1}{120})$ is the value that we anticipate, but we recall that the $m$th log-derivative generates the generalized zeta series of over all zeros of a function, hence we can recursively obtain the second solution as

\begin{equation}\label{eq:20}
\begin{aligned}
s_2&=\zeta^{-1}(\frac{1}{120}) = -\lim_{m\to \infty}\Bigg[-\frac{1}{(2m-1)!}&\frac{d^{(2m)}}{dx^{(2m)}}\log\left((\zeta(x)-\frac{1}{120})(x-1)\right)\Bigr\rvert_{x\to 0}+\\
     &           & -\frac{1}{(-2.4727305901\ldots)^{2m}}\Bigg]^{-\frac{1}{2m}} \\
    &=  -3.0000000\underline{0}597327044430\ldots
\end{aligned}
\end{equation}
by removing the first solution, and for $m=20$ the computation converges to a value $-3$ to within 8 decimal places.  But as we mentioned before, such a computation is getting more difficult because it requires the first branch $s_1$ to be known to very high precision in order to ensure convergence. Hence, we pre-computed $s_1$ to $1000$ decimal places using the standard root finder in PARI, since it is more efficient than using (174) for higher $m$. As a result, by knowing $s_1$ accurately, we compute $s_2$ using the recurrence formula. Hence using this process, we can recursively compute all the solutions which lie on different branches, but as we will show later, one has to consider a complex $m$th root in order to access them. But again, numerical computation becomes difficult as very high arbitrary precision is required.

Moving on, if we take $m=2$ and expand the inverse zeta formula (174) as

\begin{equation}\label{eq:20}
\zeta^{-1}(w)\approx\left[\frac{1}{(w+\frac{1}{2})^2}\Bigg(w^2+w\Big(-2\zeta(0)+\zeta^{\prime\prime}(0)\Big)+\zeta(0)^2+\zeta^{\prime}(0)^2-\zeta^{}(0)\zeta^{\prime\prime}(0)\Bigg)\right]^{-\frac{1}{2}}
\end{equation}
and using the identities

\begin{equation}\label{eq:20}
\begin{aligned}
\zeta(0) &= -\frac{1}{2},\\
\zeta^{\prime}(0) &= -\frac{1}{2}\log(2\pi),\\
\zeta^{\prime\prime}(0) &= \frac{1}{2}\gamma^2+\gamma_1-\frac{1}{24}\pi^2-\frac{1}{2}\log^2(2\pi),
\end{aligned}
\end{equation}
we then obtain the 2nd order approximation:
\begin{equation}\label{eq:20}
\zeta^{-1}(w)\approx\pm(w+\frac{1}{2})\Bigg[w^2+w(1+\frac{1}{2}\gamma^2+\gamma_1-\frac{1}{24}\pi^2-\frac{1}{2}\log^2(2\pi))+\frac{1}{4}+\frac{1}{4}\gamma^2+\frac{1}{2}\gamma_1-\frac{\pi^2}{48}\Bigg]^{-\frac{1}{2}}.
\end{equation}

We collect these results into expansion coefficients for $m=2$ as:

\begin{equation}\label{eq:20}
\begin{aligned}
I_0(2) &=\frac{1}{4}+\frac{1}{4}\gamma^2+\frac{1}{2}\gamma_1-\frac{\pi^2}{48}\\
       &= 0.09126979985406300159\ldots, \\
       \\
I_1(2) &= 1+\frac{1}{2}\gamma^2+\gamma_1-\frac{1}{24}\pi^2-\frac{1}{2}\log^2(2\pi)\\
       &=-1.00635645590858485121\ldots,  \\
\\
I_2(2) &=1,
\end{aligned}
\end{equation}
and then re-write (174) more conveniently as

\begin{equation}\label{eq:20}
\begin{aligned}
\zeta^{-1}(w)\approx \pm(w+\frac{1}{2})\Big[I_2(2)w^2+I_1(2)w+I_0(2)\Big]^{-\frac{1}{2}}.
\end{aligned}
\end{equation}
Here, we've added a $\pm$ sign which is dependent on the branch (usually due to the $(w+\frac{1}{2})$ term that must be positive for $w<-0.5$). This second order approximation above is very accurate for a variety of input argument (even complex). For example, for $w=2$ we compute

\begin{equation}\label{eq:20}
\begin{aligned}
\zeta^{-1}(2)\approx 1.7340397592898484279\ldots.
\end{aligned}
\end{equation}
and to verify $\zeta(\zeta^{-1}(2))\approx 1.9902700570\ldots$ is accurate to $2$ significant digits. In Figure 4 we plotted the function $w=\zeta(\zeta^{-1}(w))$ for the 2nd order approximation and see how $w$ is recovered, except in a small region $(j_1,1)$ where we get an erroneous result.  And similarly, for complex argument for $w=2+i$ we compute

\begin{equation}\label{eq:20}
\begin{aligned}
\zeta^{-1}(2+i)\approx 1.4690117151\dots - i0.3470428878\ldots,
\end{aligned}
\end{equation}
and to verify $\zeta(\zeta^{-1}(2+i))\approx 1.9886804524\ldots + i0.9958475706\ldots$ we recover $w$ correctly also to within $2$ significant digits. We will investigate the complex argument in more details a little later. Furthermore, the 2nd degree polynomial in (187) can be factored into its zeros as

\begin{equation}\label{eq:20}
\begin{aligned}
\zeta^{-1}(w)\approx \pm(w+\frac{1}{2})\Big[(w-j_1)(w-j_2)\Big]^{-\frac{1}{2}}
\end{aligned}
\end{equation}
where $j_1= 0.1007872126\ldots$ is the first zero, and $j_2= 0.9055692433\ldots$ is the second zero (computed by solving a quadratic equation). We note that these are the zeros of a polynomial in (187), and hence, they are the branch singularities of 2nd order approximation of $\zeta^{-1}(w)$. To investigate the higher order expansions for $\zeta^{-1}(w)$ in terms of these polynomials $I_n(m)$, can be written with coefficients in terms of Stieltjes constants and incomplete Bell polynomials $\textbf{B}_{n,k}(x_1,x_2,x_3\ldots,x_n)$ due to the Fa\`{a}di-Bruno expansion formula for the $n$th derivative
\begin{equation}\label{eq:20}
\frac{d^n}{dx^n}f(g(x))=\sum_{k=1}^{n}f^{(k)}(g(x))\textbf{B}_{n,k}(g\sp{\prime}(x),g\sp{\prime\prime}(x),\ldots,g^{n-k+1}(x)),
\end{equation}
and if we take
\begin{equation}\label{eq:20}
f(x) = \log(x)
\end{equation}
and
\begin{equation}\label{eq:20}
f^{(n)}(x) = (-1)^{n+1}(n-1)!\frac{1}{x^{n}}.
\end{equation}
Such Bell polynomial expansion will lead to long and complicated expressions for the $m$th log-derivative, so we will not pursue them in this paper. But for the moment, we will just rely on numerical computations, and so, based on (187), we deduce the following asymptotic expansion

\begin{equation}\label{eq:20}
\left[\frac{\zeta^{-1}(w)}{(w+\frac{1}{2})}\right]^{-m}\sim \sum_{n=0}^{m} I_n(m) w^n
\end{equation}
into a $m$th degree polynomial as $m\to \infty$, where $I_n(m)$ are the expansion coefficients. This leads to the series expansion of the inverse zeta function

\begin{equation}\label{eq:20}
\zeta^{-1}(w)=\lim_{m\to\infty}\pm(w+\frac{1}{2})\left(\sum_{n=0}^{m}I_n(m)w^n\right)^{-\frac{1}{m}}
\end{equation}
by these $I_n(m)$ coefficients, which are a function of a limit variable $m$ whose values vary depending on $m$. In Table 6, we compute these coefficients (for several $m$) for further study, and observe the following. For $w=0$, $\zeta^{-1}(w)=-2$ is the first trivial zero, hence we deduce that

\begin{equation}\label{eq:20}
I_0(m)\sim (2\rho_{t,1})^{-m}\sim (-1)^m\frac{1}{2^{2m}}.
\end{equation}
From Table 6 we also observe the asymptotic limits

\begin{equation}\label{eq:20}
I_m(m)\sim 1\quad \text{and}\quad I_{m-1}(m)\sim -\frac{m}{2}.
\end{equation}

\begin{table}[hbt!]
\caption{The computation of expansion coefficients $I_n(m)$ of equation (194) for even $m$}.
\centering
\begin{tabular}{c c c c c}
\hline\hline
$I_n(m)$  & $m=2$ & $m=4$ & $m=6$ & $m=8$ \\ [0.5ex]
\hline
$n=0$ & 0.0912697998 & 0.0042324268 &   0.0002483703 & 0.00001532100 \\
$n=1$ &-1.0063564559 &-0.1967919743 &  -0.0204091776 &-0.00174497183 \\
$n=2$ &  1           & 1.1920976317 &   0.3162826334 & 0.04840981341 \\
$n=3$ &              &-1.9995171980 &  -1.5828262271 &-0.48669059013 \\
$n=4$ &              & 1            &   3.2866782629 & 2.21705837605 \\
$n=5$ &              &              &  -3.0000078068 &-5.15932314768 \\
$n=6$ &              &              &   1            & 6.38227467998 \\
$n=7$ &              &              &                &-4.00000004611 \\
$n=8$ &              &              &                & 1             \\
\hline
\end{tabular}
\label{table:nonlin}
\end{table}

\noindent As $m\to \infty$, this expansion generates an infinite degree polynomial, which also will have infinite zeros $j_n$ which we will next glimpse numerically as a generated attractor of branch singularities. We re-write (194) as factorization

\begin{equation}\label{eq:20}
\left[\frac{\zeta^{-1}(w)}{(w+\frac{1}{2})}\right]^{-m}\sim \prod_{n=1}^{m}\left(w-j_n\right)
\end{equation}
in terms of these zeros, and compute them for $m=4$ in Table 7 and for $m=10$ in Table 8,  using a standard polynomial root finder for a generated polynomial in (194). In Appendix A, we also give values of $j_n$ for $m=50$ in Table 11 as a reference. It is also much better to see $j_n$'s graphically in Figure 5 (where we plot them in a complex plane for $m=4$, $m=10$, $m=30$ and $m=50$), and observe that they form an attractor that clusters near the endpoints. The exact values of these zeros are numerically spread out, and as more zeros are generated as a function of $m$ as $m$ increases, their accuracy also increases, but interestingly, they are mostly real, and cluster roughly in an interval $(0,1)$, but we will narrow it down next, and some zeros are also complex that cluster near $w=1$.

\begin{table}[hbt!]
\caption{The computation of $j_n$ singularities for $m=4$.}
\centering
\begin{tabular}{c c c}
\hline\hline
$n$  & $\Re(j_n)$ & $\Im(j_n)$ \\ [0.5ex]
\hline
1 &	0.02519077171287255364 & 0 \\
2 &	0.22387780988390681825 & 0 \\
3 &	0.75055928996119915729 & 0 \\
4 &	0.99988932644430613063 & 0 \\
\hline
\end{tabular}
\label{table:nonlin}
\end{table}

\begin{table}[hbt!]
\caption{The computation of $j_n$ singularities for $m=10$.}
\centering
\begin{tabular}{c c c}
\hline\hline
$n$  & $\Re(j_n)$ & $\Im(j_n)$ \\ [0.5ex]
\hline
1 &	0.01141939762352641311 & 0 \\
2 &	0.03270893154877055459 & 0 \\
3 &	0.08725746253768978834 & 0 \\
4 &	0.18974173730082442926 & 0 \\
5 &	0.35313390831120714095 & 0 \\
6 & 0.57365189826222332925 & 0 \\
7 &	0.80181268425373759307 & 0 \\
8 &	0.95232274935073811513 & 0 \\
9 &	0.99897561465713752103 & -0.00219195619260189999 \\
10 & 0.9989756146571375210 & 0.002191956192601899994 \\
\hline
\end{tabular}
\label{table:nonlin}
\end{table}

\begin{figure}[hbt!]
  \centering
  % Requires \usepackage{graphicx}
  \includegraphics[width=170mm]{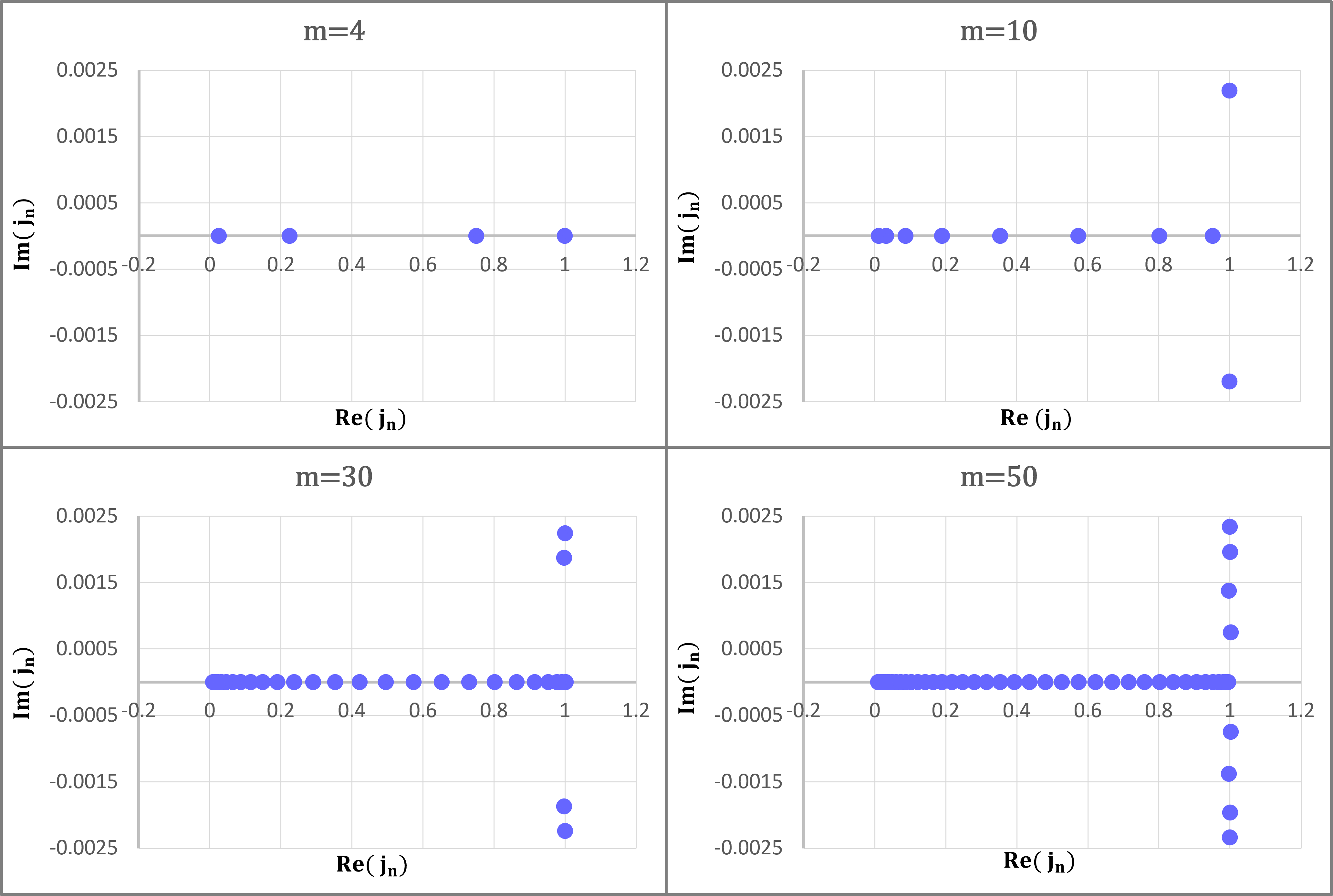}\\
  \caption{An attractor of branch singularities $j_n$ in a complex plane generated for various values of limit variable $m$.}\label{1}
\end{figure}

With more detailed numerical computation, we observe that as $m\to\infty$ they will span the interval $(j_1, j_m)$ where the lower bound is
\begin{equation}\label{eq:20}
j_1(m) \to 0.009159890119903461840056038728\ldots \quad (m\to \infty)
\end{equation}
is the lowest zero, or the principal zero. The value presented was computed numerically to high precision. And the upper bound is
\begin{equation}\label{eq:20}
j_m(m) \to O(1) \quad (m\to \infty)
\end{equation}
due to the pole of $\zeta(1)$.

\begin{figure}[h]
  \centering
  % Requires \usepackage{graphicx}
  \includegraphics[width=170mm]{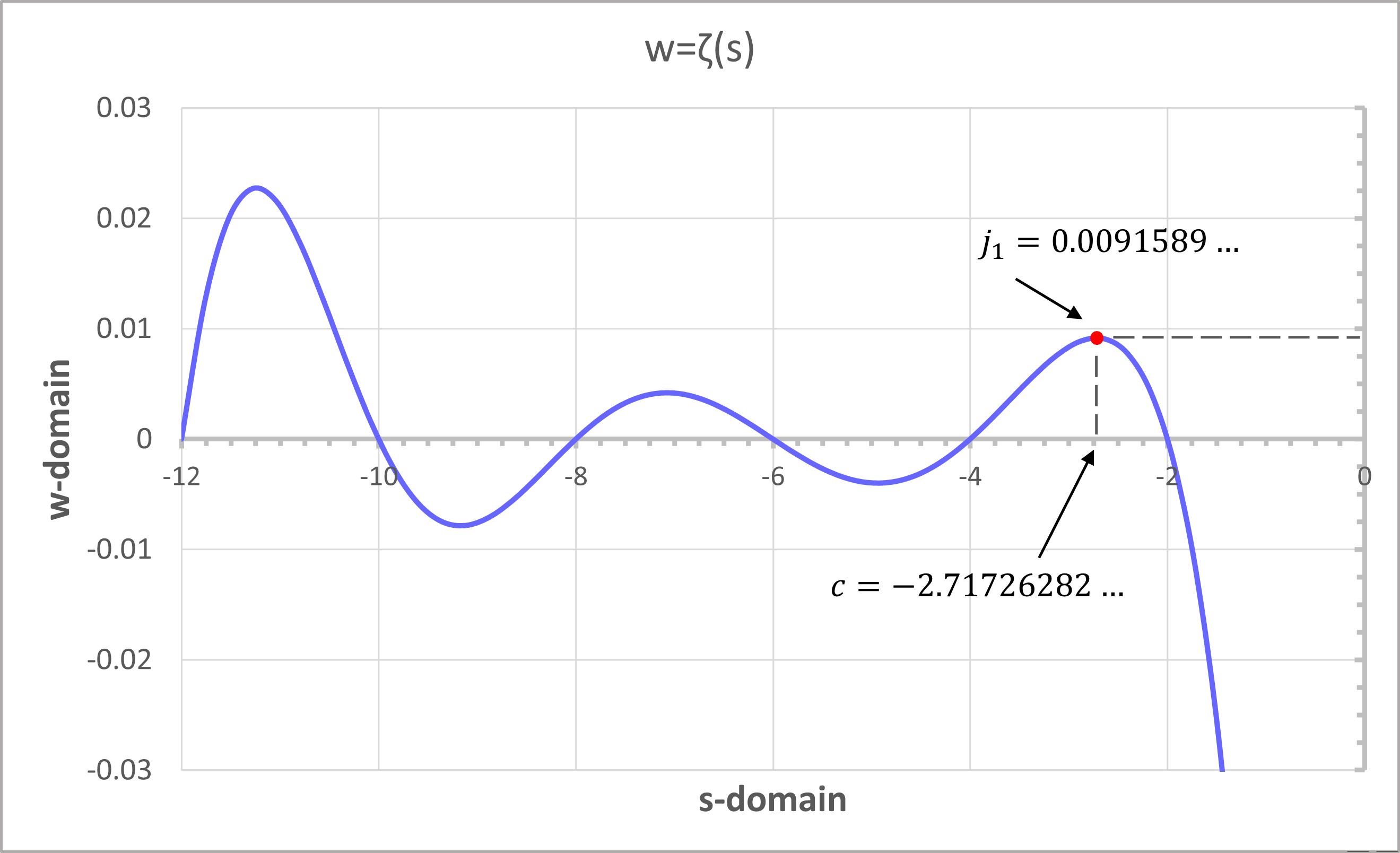}\\
  \caption{A plot of $w=\zeta(s)$ for $s\in (-8,0)$ locating local maxima $j_1$. }\label{1}
\end{figure}

From Figure 6, we see that $j_1$ corresponds to the first local maxima (between $s=-4$ and $s=-2$) in a region where the zeta takes a first turn from being monotonically increasing when going from left to right in $s$-domain ($s=1$ to $s=-2.7172628292\ldots)$ and in $w$-domain as ($w=-\infty$ to $w=0.0091598901\ldots$), at which point causes a discontinuity for this branch. We observe that $j_n$ are zeros of the expansion (194), and this implies at first that they are the singularities of $\zeta^{-1}(w)$, but because they are under an $m$th root that induces $m$ branches and forms an algebraic branch point [14, p.143]. Hence, the strip $(j_1,1)$ fills the remaining $w$-domain gap from $j_1$ to $\zeta(1)$ with these branch singularities. We conjecture that
\newtheorem{conj}{Conjecture}
\begin{conj}\notag
The principal branch $s_1=\zeta^{-1}(w)$ has an infinite number of real branch singularities in a strip $(j_1, 1)$.
\end{conj}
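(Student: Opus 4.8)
The first step is to describe the principal branch intrinsically, independently of the limit formula. The root-extraction of Theorem 2, applied to $\zeta(s)-w=0$, isolates among all solutions of $\zeta(s)=w$ the one of least modulus (it dominates the series $\sum_k s_k^{-m}$ produced by the $m$th log-derivative), \emph{provided that minimiser is unique}; and the limit in (174) fails to converge precisely when two preimages of $w$ have equal modulus but different argument, since then $s_k^{-m}$ oscillates. So one sets $s_1(w)$ to be the unique solution of $\zeta(s)=w$ of smallest $|s|$ where this is well-defined, and checks that it matches the branch that is real on $w<j_1$ with values in $(-2.717\ldots,1)$ and real on $w>1$ with values in $(1,\infty)$. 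A preliminary lemma should then show that $s_1$ continues to a meromorphic function on $\mathbb{C}\setminus[j_1,1]$, with a single pole at $w=1$: off the critical values $\{\zeta(c):\zeta'(c)=0\}$ the preimages of $w$ are locally analytic in $w$, and for $w\notin[j_1,1]$ the minimising preimage tracks analytically --- near $w=\infty$ it is $1+1/w+\cdots$, near $w=0$ it is the trivial-zero sheet through $s=-2$, and each ramification point of $\zeta^{-1}$ lying off $[j_1,1]$ (say the non-trivial-zero collision at $w=0$, or the first local-minimum critical value near $w\approx-0.01$) leaves the running minimiser undisturbed. Granting this, the conjecture becomes: the maximal analytic continuation of $s_1$ has singularities at infinitely many points of $(j_1,1)$.

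The second step extracts the singularities from the polynomial expansion (194). With $P_m(w):=\sum_{n=0}^m I_n(m)w^n$, formula (194) says $P_m(w)^{1/m}\to h(w):=(w+\tfrac12)/s_1(w)$ locally uniformly on $\mathbb{C}\setminus[j_1,1]$, and by the computation above $h$ is analytic and zero-free there (with only a simple zero at the endpoint $w=1$). Results in the spirit of the Jentzsch--Szeg\H{o} theorem on zero distributions of polynomial sequences --- if $|P_m|^{1/m}$ converges locally uniformly off a compact set $K$ to $|h|$ and $h$ admits no analytic continuation across $\partial K$, the normalised counting measures $\tfrac1m\sum_n\delta_{j_n(m)}$ converge weakly to the balayage onto $K$ of a suitable measure, a measure of full support $K$ --- then force the zeros $j_n(m)$ to accumulate at every point of the genuine singular set of $s_1$. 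Thus it suffices to prove that singular set is infinite; its strongest form would identify it with all of $[j_1,1]$, i.e.\ assert that $(j_1,1)$ is a natural boundary of $s_1$.

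The third and main step, where the real difficulty lies, is the infinitude itself. The classical ramification points account for only finitely many: on $(-\infty,-2.717\ldots)$ the functional equation makes $|\zeta(s)|\to\infty$, so the real critical values of $\zeta$ --- $\zeta(-2.717\ldots)=j_1$, then a sequence of local extrema --- have magnitudes that first dip below $j_1$ and then climb past $1$, leaving only the two or three of them that land in $(j_1,1)$ as branch points of $s_1$. The remaining, infinitely many, singularities must be of min-modulus-exchange type: values $w_\ast\in(j_1,1)$ where $\min\{|s|:\zeta(s)=w_\ast\}$ is attained by two distinct preimages, at which $s_1$ develops an algebraic/corner singularity and (174) ceases to converge. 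To manufacture infinitely many such $w_\ast$ one must follow, as $w$ decreases from $1$ to $j_1$, the competition between the countably many real preimages $\cdots<s_{-2}(w)<s_{-1}(w)<-2.717\ldots$ generated by the growing oscillation of $\zeta$ on the negative axis and the $a$-points of $\zeta$ near the critical line with $a=w$ (whose count up to height $T$ is $\tfrac1{2\pi}T\log T+O(T)$, needed uniformly in $w$), and show that their moduli cross the running minimum infinitely often. This is the crux: it requires quantitative, $w$-uniform control of $a$-point locations --- both the Riemann--von Mangoldt family near $\Re s=\tfrac12$ and the trajectories of the real preimages on $(-\infty,-2.717\ldots)$ --- well beyond the soft arguments used elsewhere here, and it is exactly what separates the weak form of the conjecture from the assertion that $(j_1,1)$ is a natural boundary.
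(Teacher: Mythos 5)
This is stated as Conjecture~1 in the paper, and the paper does not prove it. The only evidence offered there is numerical: the zero sets $\{j_n(m)\}$ computed for $m=4,10,30,50$ (Tables~7, 8, 11 and Figure~5) cluster in $[j_1,1]$ and are mostly real, and the author infers from the behaviour of the few complex roots near $w=1$ that they migrate to the real line as $m\to\infty$. So there is no paper proof to measure your argument against, and you should not present this as a closed result.

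Your proposal is nevertheless a genuine advance over the paper's treatment, because it tries to give the objects a meaning independent of the limit formula. Step~1 (defining $s_1(w)$ as the min-modulus preimage and identifying the failure of (174) with ties in modulus) is the right intrinsic description and is essentially implicit in the way Theorem~2 works; the paper never states it. Step~2, invoking a Jentzsch--Szeg\H{o}-type zero-distribution theorem to transfer from the zeros $j_n(m)$ of $P_m(w)=\sum_n I_n(m)w^n$ to the genuine singular set of $s_1$, is a natural idea. But be aware that even this step is not free: the convergence $P_m^{1/m}\to h$ asserted in (194) is itself only an empirical observation in the paper, not a theorem, and the class of zero-distribution results you want (for sequences that are not partial sums of a fixed power series and not orthogonal polynomials) needs a precise hypothesis on the speed and locus of convergence which would have to be verified here. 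Finally, Step~3 is, as you say yourself, the crux, and your argument leaves it open. Showing that the minimum of $|s|$ over preimages of $w$ changes hands infinitely often as $w$ runs over $(j_1,1)$ would need uniform-in-$a$ control of the real $a$-points on $(-\infty,-2.7\ldots)$ and of the complex $a$-points near the critical line, and you have not supplied that control. In short: your Steps~1 and~2 set up a sensible framework the paper lacks, but the conjecture remains unproved both in the paper and in your proposal, and you correctly flag where the real difficulty sits.

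One smaller caution on Step~1: you assert that $s_1$ is meromorphic on $\mathbb{C}\setminus[j_1,1]$ with its only pole at $w=1$, citing that the critical values off $[j_1,1]$ ``leave the running minimiser undisturbed.'' That needs an actual argument; in particular the trivial zeros $-4,-6,\ldots$ are all preimages of $0$, and which of them minimises $|s|$ as $w$ circles $0$ in the complex plane is not obvious without checking that $-2$ strictly dominates in modulus in a full neighbourhood of $w=0$, and similarly at every other critical value you list. This is plausible but should be proved, not granted.
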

\noindent The inverse zeta can be represented by factorization by these singularities. In Figure 3, we highlighted this singularity strip region in relation to $s_1=\zeta^{-1}(w)$. We will refer to the constants $j_n$ interchangeably as either zeros of (198) or singularities of $s_1=\zeta^{-1}(w)$.

Now, since $j_1$ is the principal zero of the expansion (194), we can find its formula by solving the infinite degree polynomial equation using Theorem 1 and Theorem 2, and find that

\begin{equation}\label{eq:20}
j_1 = \lim_{m\to\infty}\left[\frac{m}{(m-1)!}\frac{d^m}{dw^m}\log\left[\frac{\zeta^{-1}(w)}{(w+\frac{1}{2})}\right]\Bigr\rvert_{w\to 0}\right]^{-\frac{1}{m}}
\end{equation}
and in Table 9, compute (201) for a few values of $m$ and observe a slow convergence to $j_1$. We also consider writing next higher branch singularities by the recurrence relation
\begin{equation}\label{eq:20}
j_{n+1} = \lim_{m\to\infty}\left[\frac{2m}{(2m-1)!}\frac{d^{(2m)}}{dw^{(2m)}}\log\left[\frac{\zeta^{-1}(w)}{(w+\frac{1}{2})}\right]\Bigr\rvert_{w\to 0}-\sum_{k=1}^{n}\frac{1}{j_k^{2m}}\right]^{-\frac{1}{2m}}
\end{equation}
where we consider a $2m$ limit value to avoid an alternating sign in the recurrence, but numerically is very hard to compute since these non-isolated singularities are so densely spaced in an interval $(j_1, 1)$ is almost impossible to extract them. Also, the value for a constant $c$ for which $j_1=\zeta(c)$ is $c=-2.717262829\ldots$, which is close to $e$ to within 3 decimal places, and from this obtain a simple approximation to $j_1$ as

\begin{equation}\label{eq:20}
j_1 \approx \zeta(-e)= 0.009159\underline{8}77559420231\ldots
\end{equation}
which is accurate to within 7 decimal places.

\begin{table}[hbt!]
\caption{The computation of $j_1$ by equation (201) for various $m$ from low to high.}
\centering
\begin{tabular}{c c c}
\hline\hline
$m$  & $j_1$ & Significant Digits\\ [0.5ex]
\hline
$10$ &  0.\underline{0}1141936690297939790 & 1\\
$50$ &  0.00\underline{9}24371071593150307 & 3\\
$100$ & 0.009\underline{1}6896287172313725 & 4\\
\hline
\end{tabular}
\label{table:nonlin}
\end{table}

These relations allow us to write the inverse Riemann zeta function as factorization into zeros and singularities as

\begin{equation}\label{eq:20}
s_1=\zeta^{-1}(w)=\lim_{m\to \infty}\pm(w+\frac{1}{2})\prod_{n=1}^{m}\left(w-j_n\right)^{-\frac{1}{m}}.
\end{equation}

These generated singularities are so finely balanced that even for $m=10$, they can reproduce the inverse zeta function to a great degree of accuracy as we will see shortly. Also, we have the identity
\begin{equation}\label{eq:20}
\prod_{n=1}^{m}\left(j_n\right)^{-\frac{1}{m}}= 4 \quad (m\to \infty)
\end{equation}
that we just infer from numerical computations.
We remarked earlier that some of the singularities in the attractor are also complex and cluster near $1$, as shown in Figure 5 for higher $m$. Initially, we're unsure as to whether these complex zeros are real or artifacts of the root finder, but we find that they play a central role (in conjunction with the real roots) in computing the product formula (204) and many identities that follow. For example, we have
\begin{equation}\label{eq:20}
\sum_{n=1}^{m}j_n\sim \frac{m}{2}\quad (m\to\infty)
\end{equation}
obtained based on expanding the coefficients in (204). From this we have the mean value of $j_n$:
\begin{equation}\label{eq:20}
\lim_{m\to \infty} \frac{1}{m}\sum_{n=1}^{m}j_n = \frac{1}{2},
\end{equation}
and also from (205) another identity
\begin{equation}\label{eq:20}
\lim_{m\to \infty} \frac{1}{m}\sum_{n=1}^{m}\log(j_n)= -2\log(2).
\end{equation}
We observe that as $m$ increases, then the number of complex singularities that are generated increases, but their absolute values tends $1$. This tendency is also captured by Conjecture 1 above. If true, then it would imply that as $m\to\infty$ then these complex singularities will disappear and will only remain on the real line $(j_1,1)$.

We next investigate how the inverse zeta function converges for complex argument.  As another example, we compute the inverse zeta of
\begin{equation}\label{eq:20}
s=\zeta^{-1}(2+i)= 1.466595797094670\ldots-i0.343719739467598\ldots
\end{equation}
for $m=10$, and then, when taking the zeta of the inverse zeta
\begin{equation}\label{eq:20}
w=\zeta(\zeta^{-1}(2+i))=2.000000007384116\ldots+i0.999999997993535\ldots
\end{equation}
we recover the $w$-domain correctly (we see is better approximation than the 2nd order equation (185)). As another example we take the inverse zeta for large input argument
\begin{equation}\label{eq:20}
\begin{aligned}
s=\zeta^{-1}(1234&56789-i987654321)= \\
      &1.000000000124615\ldots+i0.000000000996923\ldots
\end{aligned}
\end{equation}
and then, when taking the zeta of the inverse zeta above, then we compute
\begin{equation}\label{eq:20}
w=\zeta(s)=123456789.01848\ldots-i987654321.14785\ldots
\end{equation}
where we see correct convergence to within $1$ decimal place, but if we re-compute for $m=20$, then we get
\begin{equation}\label{eq:20}
w=\zeta(s)=123456789.00000\ldots-i987654321.00000\ldots,
\end{equation}
which is now accurate to $15$ digits after the decimal place.
In general, we find that for large complex input argument, the convergence is very good, but that is sometimes not the case for smaller input argument, where in many cases, we don't get correct convergence at first. For example, if we evaluate

\begin{equation}\label{eq:20}
s=\zeta^{-1}(1.5+i)= 1.521134764270121\ldots+i0.417327503093697\ldots
\end{equation}
for $m=10$, and then inverting back
\begin{equation}\label{eq:20}
w=\zeta(\zeta^{-1}(1.5+i))=1.783854226864277\ldots-i0.908052465458989\ldots
\end{equation}
we get an erroneous results. The reason is because of the $m$th root involved in the computation of the inverse zeta actually generates $m$ branches. In general, the $m$th root of a complex number $z$ can be written as

\begin{equation}\label{eq:20}
z^{\frac{1}{m}} = |z|^{\frac{1}{m}}e^{i\frac{1}{m}(\arg(z)+2\pi \lambda)}
\end{equation}
where the branch ranges from $\lambda=0\ldots m-1$. So far, we've been using the principal root for $\lambda=0$, which is the standard $m$th root, but for complex numbers, we have to select $\lambda$ for which the solution that we want lies.  We do not have an exact criterion for which $\lambda$ solution to use, so we have to individually check every solution and find the one that we need. For example, in re-computing (214), we find that the $m$th root for $\lambda=9$ gives

\begin{equation}\label{eq:20}
s=\zeta^{-1}(1.5+i)= 1.475922826723574\ldots-i0.556475538964500\ldots
\end{equation}
for $m=10$, and then
\begin{equation}\label{eq:20}
w=\zeta(\zeta^{-1}(1.5+i))=1.500000011509227\ldots+i0.999999987375822\ldots
\end{equation}
finally reproduces the correct result. These results lead us to introducing an error function
\begin{equation}\label{eq:20}
E(w)=|w-\zeta(\zeta^{-1}(w))|
\end{equation}
used to quantify how well the inverse zeta is inverting. Essentially, taking $\zeta(\zeta^{-1}(w))$ should reproduce $w$, and so when subtracting $w$ off, we should expect
\begin{equation}\label{eq:20}
E(w)=0,
\end{equation}
and when computing it numerically, $E(w)$ will be very small because the convergence of $\zeta^{-1}(w)$ is generally very good. But when $\zeta^{-1}(w)$ is not converging correctly, usually due to the $m$th root lying on another branch, then $E(w)$ will be very high in relation to a case when $\zeta^{-1}(w)$ is normally converging. This contrast between high convergence rate and no convergence at all, allows us to write a simple search algorithm to sweep the branch of the $m$th root and minimize $E(w)$. In doing so, we introduced a reasonable threshold value of $t_x=10^{-3}$ to minimize $E(w)$ (which may be re-adjusted), and once the minima has been found, the code exits out of the loop and returns the correct branch. From further numerical study, we found that there is only one branch of the $m$th root giving the correct answer, and all other branches give erroneous results, thus making the use of this loop very easy. In our code, we define a custom $m$th root function in Listing 6, and in Listing 7, we modify the inverse zeta function with the new $m$th root branch search loop. The second modification to the script we made is that now we load a pre-computed table of $j_n$'s from a text file, and evaluate the product formula (204), instead of computing the $m$th derivative using the $\textbf{derivnum}$ function (which is slow for high $m$). In Appendix A, we provide a Table 11 with pre-computed $j_n$ for $m=50$ for reference. Hence, together with the $m$th root function, the presented algorithm allows for a very fast evaluation of $s_1=\zeta^{-1}(w)$ for any complex argument $w$ (in just under several milli-seconds) on a standard workstation. The only requirement is to pre-compute a table of $j_n$ singularities and store them in a file. In contrast, the $\textbf{derivnum}$ function takes 60 ms to evaluate one inversion for $m=10$ on our workstation, and over 5-20 minutes for $m=400$.

\lstset{language=C,deletekeywords={for,double,return},caption={A custom function to compute an $m$th root for an $\lambda$ branch.},label=DescriptiveLabel,captionpos=b}
\begin{lstlisting}[frame=single][hbt!]
\\ define mth root function
\\ s is input argument, m is mth root, lambda is the nth branch
xroot(s,m,lambda)=
{
  r = abs(s);
  y = r^(1/m)*exp(I*arg(s)/m+I*lambda*2*Pi/m);
  return(y);
}
\end{lstlisting}

\noindent When running the new script in Listing 7, we can reproduce all the results in this paper, including for the negative branch for the range $(-0.5,j_1)$ we saw earlier, which actually corresponds to an $m$th root branch at $\lambda=\frac{m}{2}$ if $m$ is even, and which is automatically found by the code. One more example, we invert

\begin{equation}\label{eq:20}
s=\zeta^{-1}(0.5+i)= 0.933314322626762\ldots-i0.930958378790106\ldots
\end{equation}
for $m=10$ which lies just above the singularities $(j_1,1)$ using the new script in Listing 7, then we verify this
\begin{equation}\label{eq:20}
w=\zeta(\zeta^{-1}(0.5+i))=0.500000004914683\ldots+i1.000000012981412\ldots
\end{equation}
which inverts $s$ back correctly, which corresponds to the $m$th root branch of $\lambda=8$ which is automatically found by the code.
\noindent To check more complex points, we generated a density plot of the error function $E(w)$ from equation (219) in Figure 7 by computing it for a grid of complex points $101\times 101$ which contains $10201$ total points spanning a range $\Re(w)\in(-2,2)$ and $\Im(w)\in(-2,2)$ equally spaced for $m=10$, and using the new code in Listing 7 which took a 1-2 minutes to compute all points. We see that generally $E(w)\sim 10^{-8}$ throughout, and when it's close to the zero at $w=-\frac{1}{2}$, then $E(w)\sim 10^{-28}$ which is surprisingly very good, and then when it's near the singularities in the range $(j_1,1)$, then $E(w)$ gets worse (as expected), and then it completely fails at the singularities (blue color). The function still runs in the singularity region because numerically, it's very unlikely to hit an exact location of the singularity, causing a $\frac{1}{0}$ division. In Figure 8, we re-plot again but for $m=50$, and now see much better convergence over the previous case for $m=10$, where now we get $E(w)\sim 10^{-55}$ throughout, and $E(w)\sim 10^{-128}$ close to zero, and then when it's near the singularity region $E(w)\sim 10^{-7}$.

\newpage

\lstset{language=C,deletekeywords={double},caption={A new  PARI function for $\zeta^{-1}(w)$ using the $m$th root branch search and singularity expansion representation (204).},label=DescriptiveLabel,captionpos=b}
\begin{lstlisting}[frame=single][hbt!]
\\ inverse zeta function valid for complex w argument
izeta(w)=
{
   \\ set mth root branch threshold
   tx = 1e-3;

   \\ load singularities from txt file into a vector
   jx = readvec("jx_singularities_m50.txt");

   \\ compute the length of vector
   m = length(jx);

   \\ compute product due to singularities
   A = prod(i=1,m,(w-jx[i]))^(-1);

   \\ mth root branch search
   for(i=0,m-1,

       \\ compute s-domain
       s = (w+1/2)*xroot(A,m,i);

       \\ compute error function
       E = abs(zeta(s)-w);

       \\ exit out of loop when threshold is met
       if(E<tx, break);
   );
   return(s);
}
\end{lstlisting}

\newpage
\begin{figure}[hbt!]
  \centering
  % Requires \usepackage{graphicx}
  \includegraphics[width=120mm]{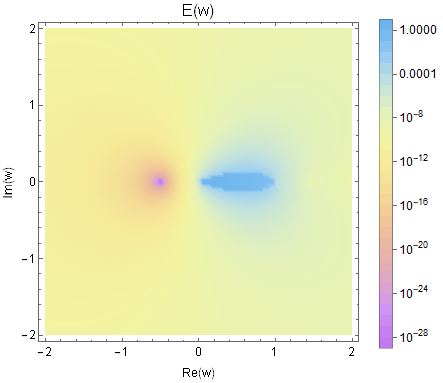}\\
  \caption{A density plot of $E(w)$ by equation (219) for $m=10$ in range of $\Re(w)\in (-2,2)$ and $\Im(w)\in (-2,2)$.}\label{1}
\end{figure}

\begin{figure}[hbt!]
  \centering
  % Requires \usepackage{graphicx}
  \includegraphics[width=120mm]{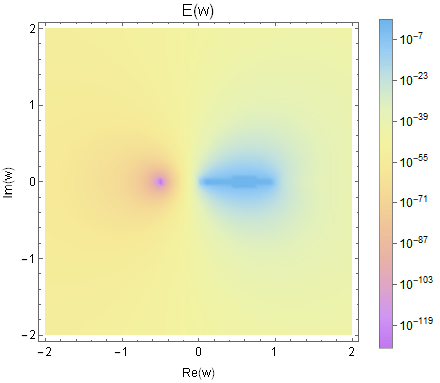}\\
  \caption{A density plot of $E(w)$ by equation (219) for $m=50$ in range of $\Re(w)\in (-2,2)$ and $\Im(w)\in (-2,2)$.}\label{1}
\end{figure}

\newpage

\section{The $\zeta^{-1}(w)$ near its zero}
The first few terms of Taylor expansion coefficients of $\zeta(s)$ about $s=0$ are

\begin{equation}\label{eq:20}
\zeta(s)= -\frac{1}{2}+\zeta'(0)s+\frac{1}{2}\zeta''(0)s^2+\ldots
\end{equation}
and the series converge for $|s|<1$. The value for $\zeta'(0)=-\log\sqrt{2\pi}$. If we write
\begin{equation}\label{eq:20}
\zeta(s)= -\frac{1}{2}-o(s\log\sqrt{2\pi})
\end{equation}
as $s\to0$ near the origin and then taking the inverse zeta of both sides, we deduce that
\begin{equation}\label{eq:20}
s=\zeta^{-1}(-\frac{1}{2}-s\log\sqrt{2\pi})
\end{equation}
as $s\to0$, and now, applying the inverse zeta series (195) given by 

\begin{equation}\label{eq:20}
\zeta^{-1}(w)=\lim_{m\to\infty}-(w+\frac{1}{2})\left(\sum_{n=0}^{m}I_n(m)w^n\right)^{-\frac{1}{m}},
\end{equation}
(the negative branch) to both sides of (226) above yields
\begin{equation}\label{eq:20}
s=\lim_{m\to\infty}-(-\frac{1}{2}-s\log\sqrt{2\pi}+\frac{1}{2})\left(\sum_{n=0}^{m}I_n(m)\left(-\frac{1}{2}-s\log\sqrt{2\pi}\right)^n\right)^{-\frac{1}{m}}
\end{equation}
and the $-\frac{1}{2}$ will cancel. The $s$ variable also cancels on both sides, and we get
\begin{equation}\label{eq:20}
\lim_{m\to\infty}-(-\log(\sqrt{2\pi}))\left(\sum_{n=0}^{m}I_n(m)(-\frac{1}{2}-s\log\sqrt{2\pi})^n\right)^{-\frac{1}{m}}=1
\end{equation}
We find that the remaining $s$ inside the infinite sum becomes negligible, and we obtain an identity

\begin{equation}\label{eq:20}
\lim_{m\to \infty}\left(\sum_{n=0}^{m}I_n(m)(-\frac{1}{2})^n\right)^{\frac{1}{m}}=\log\sqrt{2\pi}.
\end{equation}

\newpage
\section{The asymptotic relations of $\zeta^{-1}(w)$}
We first investigate a limit formula for the Euler-Mascheroni constant. From the Laurent expansion of $\zeta(s)$ in (124), we can deduce a limit identity

\begin{equation}\label{eq:20}
\gamma= \lim_{s\to 1+}\left[\zeta(s)-\frac{1}{s-1}\right]
\end{equation}
and further, by transforming the limit variable $s\to 1+\frac{1}{s}$ we obtain

\begin{equation}\label{eq:20}
\gamma= \lim_{s\to\infty}\zeta(1+\frac{1}{s})-s.
\end{equation}
We empirically find a similar relation for the inverse Riemann zeta function by numerically evaluating for $s=10^4$ as

\begin{equation}\label{eq:20}
\zeta^{-1}(10^4) =  1.000100005772562674143\ldots,
\end{equation}
where we observe a sign of a tailing $\gamma$ in the digits, which is on the order of $O(s^{-2})$. So we deduce that

\begin{equation}\label{eq:20}
\zeta^{-1}(s) \sim  1+\frac{1}{s}+ O\big(\gamma\frac{1}{s^2}\big)\quad (s\to \infty)
\end{equation}
from which we have
\begin{equation}\label{eq:20}
\gamma= \lim_{s\to\infty}\left[\zeta^{-1}(s)-(1+\frac{1}{s})\right]s^2.
\end{equation}
And similarly, we find that

\begin{equation}\label{eq:20}
\gamma= \lim_{s\to\infty}\left[\zeta^{-1}(-s)-(1-\frac{1}{s})\right]s^2
\end{equation}
from which we conclude that
\begin{equation}\label{eq:20}
\zeta^{-1}(s)\sim\zeta^{-1}(-s)\to O(1)
\end{equation}
as it is seen in graph in Figure 3. In Table 10, we summarize numerical computation of (234) using the inverse zeta formula for $m=100$ and observe convergence to $\gamma$.

\begin{table}[hbt!]
\caption{The computation of $\gamma$ by inverse zeta for for various $s$ from low to high by equation (234) and $\zeta^{-1}(s)$ for $m=100$.}
\centering
\begin{tabular}{c c c}
\hline\hline
$s$  & $\gamma$ & Significant Digits\\ [0.5ex]
\hline
$10^1$ & 0.62122994748379903608 & 0 \\
$10^2$ & 0.\underline{5}8130721658646456077 & 1  \\
$10^3$ & 0.57\underline{7}62197248836203702 & 3  \\
$10^4$ & 0.577\underline{2}5626741433442042 & 4  \\
$10^5$ & 0.5772\underline{1}972487058219773 & 5  \\
$10^6$ & 0.5772\underline{1}607089561571393 & 5  \\
$10^7$ & 0.57721\underline{5}70550091292536 & 6  \\
$10^8$ & 0.5772156\underline{6}896147058487 & 8  \\
$10^9$ & 0.5772156\underline{6}530752663021 & 8  \\
$10^{10}$ & 0.577215664\underline{9}4213223753 & 10  \\
$10^{11}$ & 0.5772156649\underline{0}559279829 & 11  \\
$10^{12}$ & 0.57721566490\underline{1}93885437 & 12  \\
\hline
\end{tabular}
\label{table:nonlin}
\end{table}

We can also obtain a different representation by expanding (234) as
\begin{equation}\label{eq:20}
\gamma= \lim_{s\to\infty}\left[s^2\zeta^{-1}(s)-(s^2+s)\right]
\end{equation}
from which we recognize the sum of natural numbers
\begin{equation}\label{eq:20}
\sum_{n=1}^{k}n=1+2+3+\ldots k = \frac{k^2}{2}+\frac{k}{2}
\end{equation}
where we obtain

\begin{equation}\label{eq:20}
\sum_{n=1}^{k}n=1+2+3+\ldots \sim -\frac{1}{2}\gamma+\frac{1}{2}k^2\zeta^{-1}(k)\quad (k\to \infty)
\end{equation}
and this is in contrast to the Euler's relation for harmonic sum
\begin{equation}\label{eq:20}
\sum_{n=1}^{k}\frac{1}{n}=1+\frac{1}{2}+\frac{1}{3}+\ldots \sim \gamma+\log(k) \quad (k\to \infty)
\end{equation}
where here, the term $\frac{1}{2}k^2\zeta^{-1}(k)$ is sort of dual to $\log(k)$ in the sense of a reflection about the origin $\zeta(s)\leftrightarrow\zeta(-s)$ for series (1), when $s\to 1$.

On a side note, it is often loosely written that
\begin{equation}\label{eq:20}
\zeta(-1)= \sum_{n=1}^{\infty}n=1+2+3+\ldots =-\frac{1}{12}
\end{equation}
in the context of the Riemann zeta function and zeta regularization, where the asymptotic term is omitted. We briefly investigate the asymptotic term of (241) by the Euler-Maclaurin formula, which breaks up the series (1) into a partial sum up to the $k-1$ order, and the remainder starting at $k$ and going to infinity
\begin{equation}\label{eq:20}
\zeta(s) = \sum_{n=1}^{k-1}\frac{1}{n^s}+\sum_{n=k}^{\infty}\frac{1}{n^s}
\end{equation}
as shown in [7, p.114], when the Euler-Maclaurin summation formula is applied to the remainder term we get

\begin{equation}\label{eq:20}
\zeta(s) = \sum_{n=1}^{k-1}\frac{1}{n^s}-\frac{k^{1-s}}{1-s}+\frac{1}{2}k^{-s}+\frac{B_2}{2}sk^{-s-1}+O(k^{-s-3})
\end{equation}
and then substituting $s=2$ we get

\begin{equation}\label{eq:20}
\zeta(2) = \sum_{n=1}^{k-1}\frac{1}{n^2}+\frac{1}{k}+\frac{1}{2k^2}+B_2\frac{1}{k^3}+O(\frac{1}{k^5})
\end{equation}
now, when solving for $B_2$ we get

\begin{equation}\label{eq:20}
B_2 = k^3\left(\zeta(2)-\sum_{n=1}^{k-1}\frac{1}{n^2}\right)-k^2-\frac{k}{2}-O(\frac{1}{k^2}),
\end{equation}
and see that is slowly resembling (241), and multiplying by $-\frac{1}{2}$ yields

\begin{equation}\label{eq:20}
-\frac{1}{2}B_2 = \frac{1}{2}k^2+\frac{1}{4}k-\frac{1}{2}k^3\left(\zeta(2)-\sum_{n=1}^{k-1}\frac{1}{n^2}\right)+O(\frac{1}{2k^2}).
\end{equation}
From this, we have the full asymptotic relation
\begin{equation}\label{eq:20}
\sum_{n=1}^{k}n=1+2+3+\ldots -\left[\frac{1}{4}k+\frac{1}{2}k^3\left(\frac{\pi^2}{6}-\sum_{n=1}^{k-1}\frac{1}{n^2}\right)\right]=-\frac{1}{12}
\end{equation}
as $k\to\infty$ which is the correct version of (241) in the context of the Riemann zeta function (involving its analytical continuation) by including the asymptotic term. Collecting these results, we have two asymptotic representations for the sum of natural numbers in the context of the Riemann zeta function as

\begin{equation}\label{eq:20}
\sum_{n=1}^{k}n=1+2+3+\ldots -\frac{1}{2}k^2\zeta^{-1}(k)= -\frac{1}{2}\gamma\quad (k\to \infty)
\end{equation}
and
\begin{equation}\label{eq:20}
\sum_{n=1}^{k}n=1+2+3+\ldots -\left[\frac{1}{4}k+\frac{1}{2}k^3\left(\frac{\pi^2}{6}-\sum_{n=1}^{k-1}\frac{1}{n^2}\right)\right]=-\frac{1}{12} \quad (k\to \infty).
\end{equation}
If we drop the asymptotic terms, and summing to infinity, then we casually write

\begin{equation}\label{eq:20}
\sum_{n=1}^{\infty}n=1+2+3+\ldots = -\frac{1}{2}\gamma,
\end{equation}
and
\begin{equation}\label{eq:20}
\sum_{n=1}^{\infty}n=1+2+3+\ldots =-\frac{1}{12},
\end{equation}
which are only loosely taken at face value and which is implied in the context Riemann zeta function. The complete asymptotic representations are (248) and (249). Also, comparing the values

\begin{equation}\label{eq:20}
\begin{aligned}
-\frac{1}{2}\gamma &= -0.2886078324\ldots, \\
-\frac{1}{12} &=-0.0833333333\ldots.
\end{aligned}
\end{equation}
It is often written in the literature that $-\frac{1}{12}$ is the assigned value to the sum of natural numbers and the asymptotic part is discarded. In actuality, one could arbitrarily assign any value to any divergent series by subtracting two such series with the same growth rate, and where the difference results in a finite constant and the divergent parts cancel.

\section{On the derivatives of $\zeta^{-1}(w)$}
We now consider the derivatives of $s_1=\zeta^{-1}(w)$ as such. By differentiating the inverse function relation

\begin{equation}\label{eq:20}
\zeta(\zeta^{-1}(w))=w
\end{equation}
we get
\begin{equation}\label{eq:20}
\zeta^{\prime}[\zeta^{-1}(w)][\zeta^{-1}(w)]^{\prime}=1
\end{equation}
by the composition rule. And this leads to a simple formula

\begin{equation}\label{eq:20}
[\zeta^{-1}(w)]^{\prime}=\frac{1}{\zeta^{\prime}[\zeta^{-1}(w)]},
\end{equation}
provided that $\zeta^{\prime}(s)\neq 0$. We saw earlier that the constant $s=c=-2.71726282\ldots$ is a zero of $\zeta^{\prime}(c) = 0$, and for which $j_1=\zeta(c)=0.00915989\ldots$. Then, evaluating (255) for $w=0$ we get

\begin{equation}\label{eq:20}
[\zeta^{-1}(0)]^{\prime}=\frac{1}{\zeta^{\prime}[\zeta^{-1}(0)]}=\frac{1}{\zeta^{\prime}[-2]}
\end{equation}
(taking the principal zero), and using the well-known identity

\begin{equation}\label{eq:20}
\zeta^{\prime}(-2n)=\frac{(-1)^n \zeta(2n+1)(2n)!}{2^{2n+1}\pi^{2n}},
\end{equation}
we have
\begin{equation}\label{eq:20}
[\zeta^{-1}(0)]^{\prime}=-\frac{4\pi^2}{\zeta(3)}.
\end{equation}
Recalling the inverse zeta factorization formula (204) as

\begin{equation}\label{eq:20}
\zeta^{-1}(w)=\lim_{m\to \infty}-(w+\frac{1}{2})\prod_{n=1}^{m}\left(w-j_n\right)^{-\frac{1}{m}}
\end{equation}
(the negative branch), and taking the first log-derivative gives

\begin{equation}\label{eq:20}
\frac{[\zeta^{-1}(w)]^{\prime}}{\zeta^{-1}(w)}=\frac{1}{(w+\frac{1}{2})}-\lim_{m\to \infty}\frac{1}{m}\sum_{n=1}^{m}\frac{1}{w-j_n},
\end{equation}
from which we have an alternate formula in terms of branch singularities $j_n$ as

\begin{equation}\label{eq:20}
[\zeta^{-1}(w)]^{\prime}=\zeta^{-1}(w)\left[\frac{1}{(w+\frac{1}{2})}-\lim_{m\to \infty}\frac{1}{m}\sum_{n=1}^{m}\frac{1}{w-j_n}\right].
\end{equation}
If we let $w=0$ then we have

\begin{equation}\label{eq:20}
\frac{[\zeta^{-1}(0)]^{\prime}}{\zeta^{-1}(0)}=2+\lim_{m\to \infty}\frac{1}{m}\sum_{n=1}^{m}\frac{1}{j_n}.
\end{equation}
Now relating with (258) we obtain a formula for either the average value of

\begin{equation}\label{eq:20}
\lim_{m\to \infty}\frac{1}{m}\sum_{n=1}^{m}\frac{1}{j_n}=2\left(\frac{\pi^2}{\zeta(3)}-1\right),
\end{equation}
or a formula for Ap\'ery's constant

\begin{equation}\label{eq:20}
\zeta(3)=\pi^2\left(1+\frac{1}{2}\lim_{m\to \infty}\frac{1}{m}\sum_{n=1}^{m}\frac{1}{j_n}\right)^{-1}.
\end{equation}
These relations motivate to obtain the generalized zeta series for $\zeta^{-1}(w)$ by Theorem 1 using the $m$th logarithmic differentiation to obtain

\begin{equation}\label{eq:20}
\begin{aligned}
Z_{j}(m) &=\frac{1}{(m-1)!}\frac{d^{m}}{dw^{m}}\log\left[\frac{\zeta^{-1}(w)}{w+\frac{1}{2}}\right]\Bigr\rvert_{w\to 0}\\
           \\
          &= \lim_{k\to\infty}\frac{1}{k}\sum_{n=1}^{k}\frac{1}{j_n^m},
\end{aligned}
\end{equation}
where we specifically canceled the only zero with $w+\frac{1}{2}$. This leads to a generalized zeta series over just the singularities of $s_1=\zeta^{-1}(w)$, and the first few special values are:
\begin{equation}\label{eq:20}
\begin{aligned}
Z_{j}(1) &=2\left(-1+\frac{\pi^2}{\zeta(3)}\right)\\
         &= 14.42119333144247050884\ldots,\\
\\
Z_{j}(2) &=4\left(1-\frac{\pi^4}{\zeta(3)^2}-8\frac{\pi^6}{\zeta(3)^3}\zeta^{\prime\prime}(-2)\right)\\
         &= 899.16532329931876633541\ldots, \\
\\
Z_{j}(3) &=8\left(-1+\frac{\pi^6}{\zeta(3)^3}+(12\zeta^{\prime\prime}(-2)+8\zeta^{\prime\prime\prime}(-2))\frac{\pi^8}{\zeta(3)^4}+96\frac{\pi^{10}}{\zeta(3)^5}\zeta^{\prime\prime}(-2)^2\right)\\
         &= 75463.66774845673072302538\ldots, \\
\\
Z_{j}(4) &=16\Bigg[1-\frac{\pi^8}{\zeta(3)^4}-\frac{\pi^{10}}{\zeta(3)^5}\bigg(16\zeta^{\prime\prime}(-2)+\frac{32}{3}\zeta^{\prime\prime\prime}(-2)+\frac{16}{3}\zeta^{\prime\prime\prime\prime}(-2)\bigg)+\\
         & -\frac{\pi^{12}}{\zeta(3)^6}\bigg(160\zeta^{\prime\prime}(-2)^2+\frac{640}{3}\zeta^{\prime\prime}(-2)\zeta^{\prime\prime\prime}(-2)\bigg)-1280\frac{\pi^{14}}{\zeta(3)^{7}}\zeta^{\prime\prime}(-2)^3\Bigg] \\
         &= 6936470.11903064697027091228\ldots. \\
\end{aligned}
\end{equation}
These closed-form formulas were obtained using (265) in conjunction with differentiating (255) and (261). The values of $Z_{j}(m)$ are naturally normalized by a factor $\frac{1}{k}$ taken in the limit, and they converge to a finite constant, otherwise, without the $\frac{1}{k}$ factor, they would be quickly divergent. Also, as $m\to\infty$ the $Z_{j}(m)$ diverges.

As we have shown in the previous sections by equation (201), that one could obtain a formula for $j_1$ by the limit

\begin{equation}\label{eq:20}
j_1=\lim_{m\to\infty}[Z_j(m)]^{-\frac{1}{m}}
\end{equation}
and substituting (265) for $Z_{j}(m)$ we have

\begin{equation}\label{eq:20}
j_1=\lim_{m\to\infty}\left[-\frac{m}{(m-1)!}\frac{d^{m}}{dw^{m}}\log\left[\frac{\zeta^{-1}(w)}{w+\frac{1}{2}}\right]\Bigr\rvert_{w\to 0}\right]^{-\frac{1}{m}}
\end{equation}
and a factor of $m$ is needed to cancel the $\frac{1}{m}$ from $Z_{j}(m)$. Numerical computation of (268) is summarized in Table 5 in the previous section.

\section{Conclusion}
In the presented work, we utilized the $m$th log-derivative formula to obtain a generalized zeta series of the zeros of the Riemann zeta function from which we can recursively extract trivial and non-trivial zeros. We then extended the same methods as to solve an equation $\zeta(s)-w=0$ in order to obtain an inverse Riemann zeta function $s=\zeta^{-1}(w)$. We then introduced a series expansion and a branch singularity expansion formula for the inverse zeta, in which the singularities $j_n$ span a conjectured interval $(j_1,1)$. Not much is known about these quantities, yet they can describe the entire $s_1=\zeta^{-1}(w)$ (the principal branch) and many identities that follow. We further numerically explored these formulas to high precision in PARI/GP software package, and show that they do indeed converge to the inverse Riemann zeta function for various test cases, and then developed an efficient computer code to compute the inverse zeta for complex $w$-domain.

We remark that the methods presented in this article can be naturally applied to find zeros and inverses of many other functions, but each function has to be custom fitted for this method. For example, we obtain an inverse of the gamma function:

\begin{equation}\label{eq:20}
s=\Gamma^{-1}(w)=\lim_{m\to\infty} \left[-\frac{1}{(m-1)!}\frac{d^{m}}{ds^{m}}\log\left((\Gamma(s)-w)s\right)\Bigr\rvert_{s\to 0}\right]^{-\frac{1}{m}}
\end{equation}
which is the principal solution, or the inverse Bessel function of the first kind
\begin{equation}\label{eq:20}
s=J^{-1}_{\nu,n}(w)=\lim_{m\to\infty} \left[-\frac{1}{2(m-1)!}\frac{d^{m}}{ds^{m}}\log\left(J_{\nu,n}(s)-w\right)\Bigr\rvert_{s\to 0}\right]^{-\frac{1}{m}}.
\end{equation}
The Lambert-W function is defined as an inverse of $g(x)=xe^x$ which we could write as

\begin{equation}\label{eq:20}
g^{-1}(x)=W(x)=\lim_{m\to\infty}\left[-\frac{1}{(m-1)!}\frac{d^{m}}{ds^{m}}\log\left(se^s-x\right)\Bigr\rvert_{s\to 0}\right]^{-\frac{1}{m}},
\end{equation}
but one is no longer limited to $g(x)=xe^x$, and could invert essentially any function, for example, we can invert $h(x)=(x-x^3)e^{x}$ as
\begin{equation}\label{eq:20}
h^{-1}(x)=\lim_{m\to\infty}\left[-\frac{1}{(m-1)!}\frac{d^{m}}{ds^{m}}\log\left((s-s^3)e^{s}-x\right)\Bigr\rvert_{s\to 0}\right]^{-\frac{1}{m}}.
\end{equation}
One also has the inversion of trigonometric functions

\begin{equation}\label{eq:20}
\cos^{-1}(x)=\lim_{m\to\infty}\left[-\frac{1}{2(m-1)!}\frac{d^{m}}{ds^{m}}\log\left(\cos(s)-x\right)\Bigr\rvert_{s\to 0}\right]^{-\frac{1}{m}},
\end{equation}
or
\begin{equation}\label{eq:20}
\cos(x)=\lim_{m\to\infty}\left[-\frac{1}{(m-1)!}\frac{d^{m}}{ds^{m}}\log\left(\cos^{-1}(s)-x\right)\Bigr\rvert_{s\to 0}\right]^{-\frac{1}{m}}.
\end{equation}
A generalized zeta series can also be inverted, for example, the inverse of the secondary zeta function as

\begin{equation}\label{eq:20}
Z^{-1}_1(w)=\lim_{m\to\infty}\left[-\frac{1}{(m-1)!}\frac{d^{m}}{ds^{m}}\log\left(\frac{1}{t_1^{s}}+\frac{1}{t_2^{s}}+\frac{1}{t_3^{s}}+\ldots-w\right)\Bigr\rvert_{s\to 0}\right]^{-\frac{1}{m}},
\end{equation}
which is over imaginary parts of non-trivial zeros.

Finally, we give an example to how solve a finite degree polynomial of degree six. First we create a polynomial with prescribed zeros by factorization as
\begin{equation}\label{eq:20}
f(x)=(x-1)(x-2)(x-3)(x-4)(x-5)(x-6)
\end{equation}
which yields the polynomial
\begin{equation}\label{eq:20}
f(x)= x^6-21x^5+175x^4-735x^3+1624x^2-1764x+720
\end{equation}
we wish to solve. Then using Theorem 1, we obtain the generalized zeta series over its zeros as

\begin{equation}\label{eq:20}
Z(m)=-\frac{1}{(m-1)!}\frac{d^{m}}{dx^{m}}\log\bigg[x^6-21x^5+175x^4-735x^3+1624x^2-1764x+720\bigg]\Bigr\rvert_{x\to 0}.
\end{equation}
The principal zero is computed as

\begin{equation}\label{eq:20}
z_1=\lim_{m\to\infty}[Z(m)]^{-\frac{1}{m}}
\end{equation}
and a numerical computation for $m=100$ yields

\begin{equation}\label{eq:20}
z_1=0.99999999999999999999\ldots
\end{equation}
accurate to 32 decimal places, at which point we just round to $1$. The next zero is recursively found as

\begin{equation}\label{eq:20}
z_2=\lim_{m\to\infty}(Z(m)-\frac{1}{1^m})^{-\frac{1}{m}}
\end{equation}
and a numerical computed for $m=100$ yields

\begin{equation}\label{eq:20}
z_2=1.999999999999999999\underline{9}5\ldots
\end{equation}
which is accurate to 19 decimal places, at which point we just round to $2$. The next zero is recursively found as

\begin{equation}\label{eq:20}
z_3=\lim_{m\to\infty}(Z(m)-\frac{1}{1^m}-\frac{1}{2^m})^{-\frac{1}{m}}
\end{equation}
and a numerical computation for $m=100$ yields

\begin{equation}\label{eq:20}
z_3= 2.9999999999999\underline{9}037839\ldots
\end{equation}
which is accurate to 14 decimal places, at which point we just round to $3$. We keep repeating this and compute the remaining zeros
\begin{equation}\label{eq:20}
\begin{aligned}
z_4 &= 3.9999999999\underline{9}185185599\ldots, \\
z_5 &= 4.99999999\underline{9}39626633006\ldots,\\
z_6 &= 5.99999999999999999999\ldots. \\
\end{aligned}
\end{equation}
Such methods can be effectively used to solve a finite or infinite degree polynomial and is straightforward if the zeros are positive and real, but the root extraction becomes more difficult if the roots are a mix of positive and negative numbers or if they are complex. Usually, some other knowledge about these zeros would be required in order to extract them recursively. The presented methods give analytical formulas for the zeros of functions and their inverses.

\section*{Note}
This manuscript was further improved over the previously published versions. 

\section*{Acknowledgement}
I thank the referees at CMST Journal for reviewing the original manuscript, and providing valuable feedback. I also thank Dr. R.P. Brent for discussions about his recently published (BPT) theorem, as referenced in [4,5], which is key in computing the Hassani constant to high accuracy. Finally I thank the team at CMST Journal for their fantastic work, in particular, Dr. Marek Wolf and Dr. Krzysztof Wojciechowski.

\newpage

\texttt{Email: art.kawalec@gmail.com}

\newpage
\section{Appendix A}
As a reference, we a generate a set of branch singularities $j_n$ for $m=50$ in Table 11.

\begin{table}[hbt!]
\caption{A generated attractor of $j_n$ branch singularities for $m=50$ (first 20 digits).}
\centering
\begin{tabular}{||c| c| c|c| c| c||}
\hline\hline
$n$  & $\Re(j_n)$ & $\Im(j_n)$ &  $n$ & $\Re(j_n)$ & $\Im(j_n)$\\ [0.5ex]
\hline
\hline
1 &	 0.00924817888645333386 & 0 &26 & 0.48044184864825218169 & 0 \\
2 &	 0.00996087442670693606 & 0 &27	& 0.52650880760811362399 & 0 \\
3 &	 0.01141938808870171357 & 0 &28	& 0.57365240977961119353 & 0 \\
4 &	 0.01368586086618980746 & 0 &29	& 0.62127161119034671556 & 0 \\
5 &	 0.01684470809898810962 & 0 &30	& 0.66867281662328202071 & 0 \\
6 &  0.02099530482694698571 & 0 &31	& 0.71509271434195433147 & 0 \\
7 &	 0.02624577567440431672 & 0 &32	& 0.75973208235517804285 & 0 \\
8 &	 0.03270868212775015136 & 0 &33	& 0.80179908715096726792 & 0 \\
9 &	 0.04049860736728589915 & 0 &34	& 0.84055880661322018634 & 0 \\
10 & 0.04973119568552762263 & 0 &35	& 0.87538416573629878458 & 0 \\
11 & 0.06052307676214534294 & 0 &36	& 0.90580259267755448615 & 0 \\
12 & 0.07299215386578867256 & 0 &37	& 0.93153277730500318545 & 0 \\
13 & 0.08725783916047022045 & 0 &38	& 0.95250698743358019409 & 0 \\
14 & 0.10344091372080303722 & 0 &39	& 0.96887620857910871910 & 0 \\
15 & 0.12166275250580498916 & 0 &40	& 0.98099739567543390881 & 0 \\
16 & 0.14204368600343120189 & 0 &41	& 0.98940487555888638730 & 0 \\
17 & 0.16470028029514330906 & 0 &42	& 0.99465572536512300752 & 0 \\
18 & 0.18974131865445660803 & 0 &43	& 1.00176360153074581721 & -0.000748412701421  \\
19 & 0.21726227453862363234 & 0 &44	& 1.00176360153074581721 &  0.000748412701421  \\
20 & 0.24733809351267049963 & 0 &45	& 0.99696259008061343773 & -0.001379208199501  \\
21 & 0.28001416776669776249 & 0 &46	& 0.99696259008061343773 &  0.001379208199501  \\
22 & 0.31529551052554821559 & 0 &47	& 1.00076852275562395685 & -0.001960850963677  \\
23 & 0.35313433733493148978 & 0 &48	& 1.00076852275562395685 &  0.001960850963677  \\
24 & 0.39341655028813465181 & 0 &49	& 0.99900506368913964681 & -0.002338536288224  \\
25 & 0.43594800026223553641 & 0 &50	& 0.99900506368913964681 &  0.002338536288224  \\
\hline
\end{tabular}
\label{table:nonlin}
\end{table}

We compute the following identities based on this data set.
\noindent The mean value is computed as

\begin{equation}\label{eq:20}
\mu=\frac{1}{m}\sum_{n=1}^{m}j_n  = \frac{1}{2}
\end{equation}
accurate to $71$ decimal places. And also the variance is

\begin{equation}\label{eq:20}
\begin{aligned}
\sigma^2&=\frac{1}{m}\sum_{n=1}^{m}(j_n-\frac{1}{2})^2 \\
&=0.15443132980306572121 \ldots
\end{aligned}
\end{equation}
and
\begin{equation}\label{eq:20}
\sigma=0.39297751819037399128\ldots
\end{equation}
The mean value of $\log(j_n)$ is

\begin{equation}\label{eq:20}
\begin{aligned}
\frac{1}{m}\sum_{n=1}^{m}\log(j_n)&= -2\log(2),\\
&=-1.386294361119890\underline{6}0107\ldots
\end{aligned}
\end{equation}
accurate to $16$ decimal places. We also have the generalized zeta series
\begin{equation}\label{eq:20}
\begin{aligned}
\frac{1}{m}\sum_{n=1}^{m}\frac{1}{j_n} &= 2\left(-1+\frac{\pi^2}{\zeta(3)}\right),\\
&=  14.421193331442\underline{4}2811203\ldots
\end{aligned}
\end{equation}
accurate to $13$ decimal places. We compute
\begin{equation}\label{eq:20}
\begin{aligned}
\prod_{n=1}^{m}\left(1+\frac{1}{2j_n}\right)^{\frac{1}{m}}&=4\log\sqrt{2\pi},\\
&=  3.675754132818690\underline{9}0182,
\end{aligned}
\end{equation}
which is accurate to $16$ decimal places, and also, when taking the limit $s=10^{20}$, the Euler-Mascheroni constant:

\begin{equation}\label{eq:20}
\begin{aligned}
\gamma &=\left[(s+\frac{1}{2})\prod_{n=1}^{m}\left(s-j_n\right)^{-\frac{1}{m}}-(1+\frac{1}{s})\right]s^2\\
       &=0.577215664901532860\underline{6}1\ldots
\end{aligned}
\end{equation}
accurate to $19$ decimal places.

\section{Appendix B}
There is a recurrence relation for the generalized zeta series $Z_{\nu}(s)$ over the zeros of the Bessel function of the first kind found in Sneddon [18, p.149], which satisfies
\begin{equation}\label{eq:20}
\sum_{r=0}^{m}\frac{m!\Gamma(m+\nu+1)}{(m-r)!\Gamma(m+\nu-r+1)}(-4)^r Z_{\nu}(2r+2)=\frac{1}{4(\nu+m+1)},
\end{equation}
from which, with additional simplifications, we re-write this formula slightly differently as to keep it compact by defining the summand term
\begin{equation}\label{eq:20}
K(r,m,\nu)=(-4)^r\frac{m!\Gamma(m+\nu+1)}{(m-r)!\Gamma(m-r+\nu+1)}
\end{equation}
so that
\begin{equation}\label{eq:20}
Z_{\nu}(2m+2) = \frac{1}{K(m,m,\nu)}\left(\frac{1}{4(m+\nu+1)}-\sum_{r=1}^{m}K(r-1,m,\nu)Z_{\nu}(2r)\right),
\end{equation}
and further expanding yields
\begin{equation}\label{eq:20}
\begin{aligned}
Z_{\nu}(2m+2) =& \frac{1}{4}\sum_{r=1}^{m}\frac{Z_{\nu}(2r)}{(-4)^{m-r}(m-r+1)!(\nu+1)_{m-r+1}}+\\
              &+(-1)^m\bigg(\frac{1}{4}\bigg)^{m+1}\frac{1}{m!(m+\nu+1)(\nu+1)_{m}},
\end{aligned}
\end{equation}
also found in [18, eq (39)]. The gamma terms of the type

\begin{equation}\label{eq:20}
(x)_{k}=\frac{\Gamma(x+k)}{\Gamma(x)}=x(x+1)(x+2)\cdots(x+k-1)=\sum_{n=0}^{k}(-1)^{n}s(n,k)x^n,
\end{equation}
is the rising factorial (which can be written in terms of the Pochhammer symbol) result in a finite $k$th degree polynomial function with integer coefficients, i.e., the Stirling numbers of the first kind $s(n,k)$. Also for the case $m=0$ the summation term in (295) assumed to be $0$ to bootstrap the recurrence. These formulas generate $Z_{\nu}(2m)$ as shown in equation (70) and Table 1.


\begin{thebibliography}{9}
\bibitem{latexcompanion}
M. Abramowitz and I. A. Stegun. \textit{Handbook of Mathematical Functions with
Formulas, Graphs, and Mathematical Tables}.
Dover Publications, ninth printing, New York, (1964).

\bibitem{latexcompanion}
R. Ap\'ery, \text{Irrationalit'e de $\zeta(2)$ et $\zeta(3)$}, Ast'erisque \textbf{61}, p.11–13, (1979).

\bibitem{latexcompanion}
J. Arias De Reyna. \textit{Computation of the secondary zeta function}.math.NT/arXiv:2006.04869, (Jun. 2020).

\bibitem{latexcompanion}
R.P. Brent, D.J. Platt, T.S. Trudgian. \textit{Accurate estimation of sums over zeros of the Riemann zeta-function}.
Math. Comp. Also at arXiv:2009.13791, (Sep. 2020).

\bibitem{latexcompanion}
R.P. Brent, D.J. Platt, T.S. Trudgian. \textit{A harmonic sum over nontrivial zeros of the Riemann zeta-function}.
Bull. Austral. Math. Soc.  (Nov. 2020).

\bibitem{latexcompanion}
M. Coffey. \textit{Relations and positivity results for the derivatives of the Riemann $\xi$ function}.
Elsevier, Journal of Computational and Applied Mathematics. 166, 525-534, (2004).

\bibitem{latexcompanion}
H.M. Edwards.
\textit{Riemann's Zeta Function}.
Dover Publications, Mineola, New York, (1974).

\bibitem{latexcompanion}
R. Garunkštis, J. Steuding.\textit{On the roots of the equation $\zeta(s)=a$}. Abhandlungen aus dem Mathematischen Seminar der Universität Hamburg \textbf{84}, p.1–15 (2014).

\bibitem{latexcompanion}
S. Golomb. \textit{Formulas for the next prime}. Pacific Journal of Mathematics, \textbf(63),(1976).

\bibitem{latexcompanion}
M. Hassani. \textit{Explicit Approximation Of The Sums Over The Imaginary Part of The Non-Trivial Zeros of The Riemann Zeta Function}. Applied Mathematics E-Notes; $\textbf{16}$:109–116, (2016).

\bibitem{latexcompanion}
A. Kawalec. \textit{The $n$th+1 prime limit formulas}.math.GM/arXiv:1608.01671v2, (Aug. 2016).

\bibitem{latexcompanion}
A. Kawalec. \textit{The recurrence formulas for primes and non-trivial zeros of the Riemann zeta function}.
math.GM/arXiv:2009.02640v2, (Sep. 2020).

\bibitem{latexcompanion}
A. Kawalec. \textit{Analytical recurrence formulas for non-trivial zeros of the Riemann zeta function}.
math.NT/arXiv:2012.06581v3, (Mar. 2021).

\bibitem{latexcompanion}
K. Knopp.
\textit{Theory Of Functions Part I and Part II}.
Dover Publications, Mineola, New York, (1996).

\bibitem{latexcompanion}
D.H. Lehmer. \textit{The Sum of Like Powers of the Zeros of the Riemann Zeta Function}.
Mathematics of Computation. \textbf{50}(181), 265-273, (1988).

\bibitem{latexcompanion}
Y. Matsuoka. \textit{A sequence associated with the zeros of the Riemann zeta function}.
Tsukuba J. Math. \textbf{10}(2), 249-254, (1986).

\bibitem{latexcompanion}
The PARI~Group, PARI/GP version \texttt{2.11.4}, Univ. Bordeaux, (2019).

\bibitem{latexcompanion}
I.N. Sneddon. \textit{On some infinite series involving the zeros of Bessel functions of the first kind}.
Glasgow Mathematical Journal, Volume \textbf{4}(3), 144-156,(1960).

\bibitem{latexcompanion}
A. Voros. \textit{Zeta functions for the Riemann zeros}.
Ann. Institute Fourier, \textbf{53}, 665–699,(2003).

\bibitem{latexcompanion}
A. Voros. \textit{Zeta Functions over Zeros of Zeta Functions}.
Springer; 2010th edition (2010).

\bibitem{latexcompanion}
G.N. Watson. \textit{A Treatise On The Theory of The Bessel Functions}.
Cambridge Mathematical Library; 2nd edition (1995).

\bibitem{latexcompanion}
Wolfram Research{,} Inc., Mathematica version \texttt{12.0}, Champaign, IL, (2018).

\end{thebibliography}
\end{document}